\newtheorem{Theorem}{Theorem}
\newtheorem{Corollary}{Corollary}
\newtheorem{Proposition}{Proposition}[section]
\newtheorem{Lemma}[Proposition]{Lemma}
\theoremstyle{definition}
\newtheorem{Definition}[Proposition]{Definition}
\newtheorem{Question}{Question}
\newtheorem*{Claim}{Claim}
\theoremstyle{remark}
\newtheorem{Remark}{Remark}
\newcommand{\len}{\operatorname{len}}
\title{Conformal tilings, combinatorial curvature, and the type problem}
\author{Mohith Raju Nagaraju}
\address{Department of Mathematics, Indian Institute of Science, Bangalore, India}
\email{mohith.math@gmail.com}
\subjclass[2020]{Primary: 52C26, 30F45; Secondary: 52C25, 52C20, 05C10, 52B70}
\begin{document}

\begin{abstract}
    Roughly, a conformal tiling of a Riemann surface is a tiling where each tile is a suitable conformal image of a Euclidean regular polygon. In 1997, Bowers and Stephenson constructed an edge-to-edge conformal tiling of the complex plane using conformally regular pentagons. In contrast, we show that for all $n\geq 7$, there is no edge-to-edge conformal tiling of the complex plane using conformally regular $n$-gons. More generally, we discuss a relationship between the combinatorial curvature at each vertex of the conformal tiling and the universal cover (sphere, plane, or disc) of the underlying Riemann surface. This result follows from the work of Stone (1976) and Oh (2005) through a rich interplay between Riemannian geometry and combinatorial geometry. We provide an exposition of these proofs and some new applications to conformal tilings.
\end{abstract}

\maketitle

\tableofcontents

\section{Introduction}

\subsection{Polygonal surfaces, combinatorial curvature, and complex structure}
A \textit{Polygonal surface} is obtained by considering a (possibly infinite) collection of Euclidean polygons and gluing their edges. Each edge is identified with exactly one other edge having the same edge length by an isometry of edges. The resulting object is a topological surface without boundary. Note that polygonal surfaces are sometimes called polyhedral surfaces. The dodecahedron consisting of pentagons and the hexagonal tessellation of the Euclidean plane with hexagons is an example of a compact and non-compact polygonal surface, respectively.

In a polygonal surface $E$, the interior of each polygon has a complex structure that is obtained by canonically embedding the polygon in the complex plane. When $E$ is an oriented surface, the complex structure in the interior of polygons extends to a global Riemann surface structure on the polygonal surface $E$ (see Subsection \ref{subsec:complex_structure}). A Riemann surface is called elliptic (resp. parabolic and hyperbolic) if its universal cover is the Riemann sphere (resp. complex plane and unit disc). Given a specific polygonal surface $E$, determining the universal cover of $E$ is called the \textit{type problem}.

A polygonal surface that contains only unit regular Euclidean polygons, that is, regular polygons with unit side length, is called a \textit{regular polygonal surface}. Such a surface is completely determined by its combinatorics. More precisely, the vertices and edges of the polygons give rise to an embedded graph and the combinatorics of this embedded graph completely determines the regular polygonal surface. In this article, we relate the local combinatorics of the embedded graph with the complex structure of the regular polygonal surface.

The local combinatorics around a vertex $v$ of a regular polygonal surface is captured by its \textit{vertex-type}. The vertex-type of a vertex $v$ is the cyclic tuple of integers $[k_1,k_2,\dots,k_d]$, where $d$ is the degree of $v$ and $k_i$ is the number of sides (the size) of the $i$-th polygon/tile around $v$, in either clockwise or counter-clockwise order (see Figure \ref{fig:vertex_type}). The information in the vertex-type of a vertex $v$ can be condensed into two important numbers: the \textit{angle-sum} $\mathcal{A}(v)$ at the vertex $v$ and the \textit{combinatorial curvature} $\kappa(v)$ at the vertex $v$. These two quantities are defined as:
\begin{equation}\label{eqn:angle_sum_combinatorial_curvature}
    \mathcal A(v):= \sum_{i=1}^d \left( \pi - \frac{2\pi}{k_i} \right) \qquad \quad \kappa(v):= \frac{2\pi - \mathcal A(v)}{2\pi}.
\end{equation}
Equivalently, the combinatorial curvature $\kappa(v)$ at $v$ can be expressed as:
\begin{equation}\label{eqn:combi_curv}
    \kappa(v):=  1 - \frac{\operatorname{deg}(v)}{2} + \sum_{i=1}^d \frac{1}{k_i},
\end{equation}
where $\operatorname{deg}(v)$ denotes the degree of the vertex $v$.
\begin{figure}[t!]
    \centering
    {
        \hfill
        \includegraphics[width=0.31\textwidth]{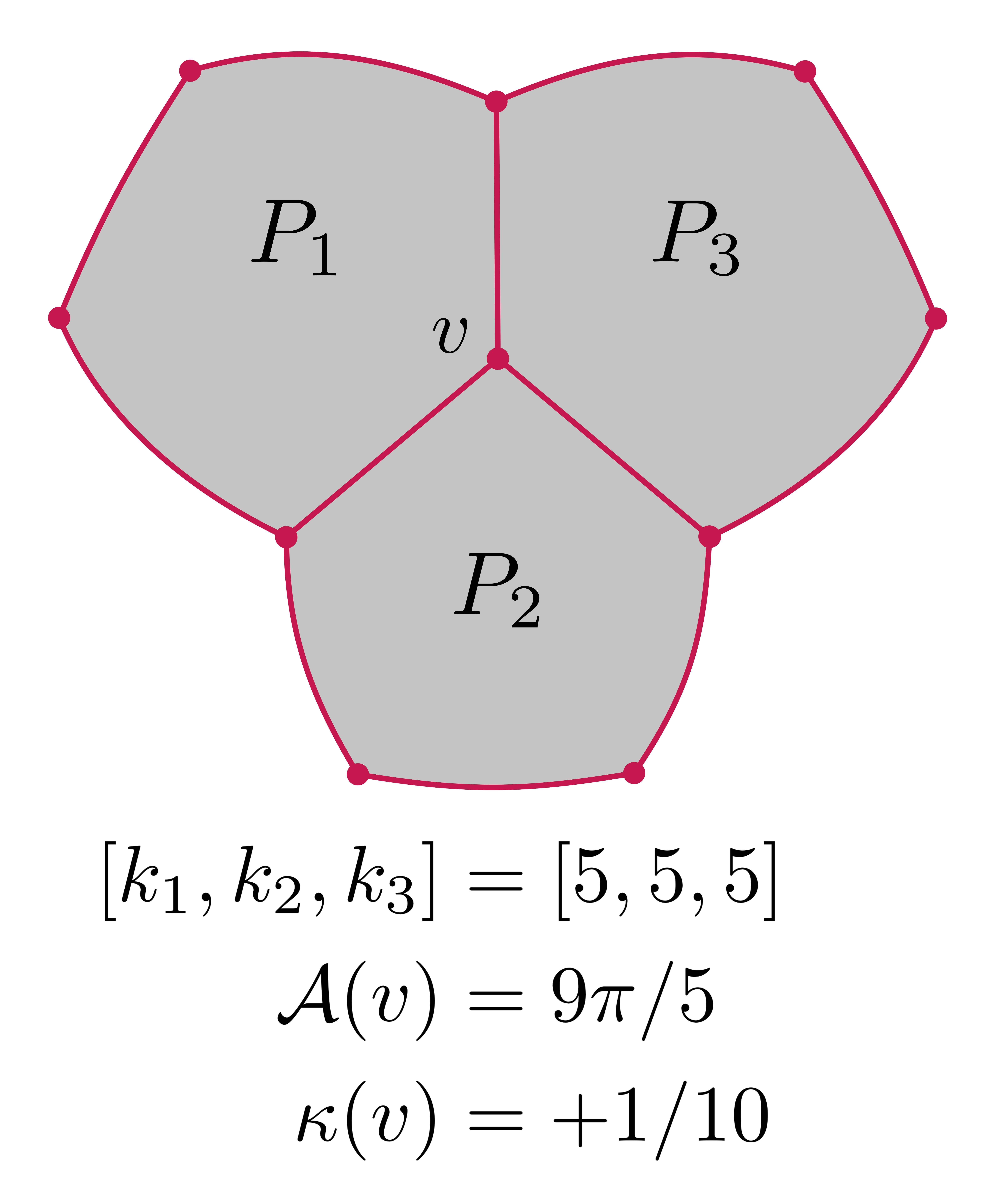} \hfill 
        \includegraphics[width=0.31\textwidth]{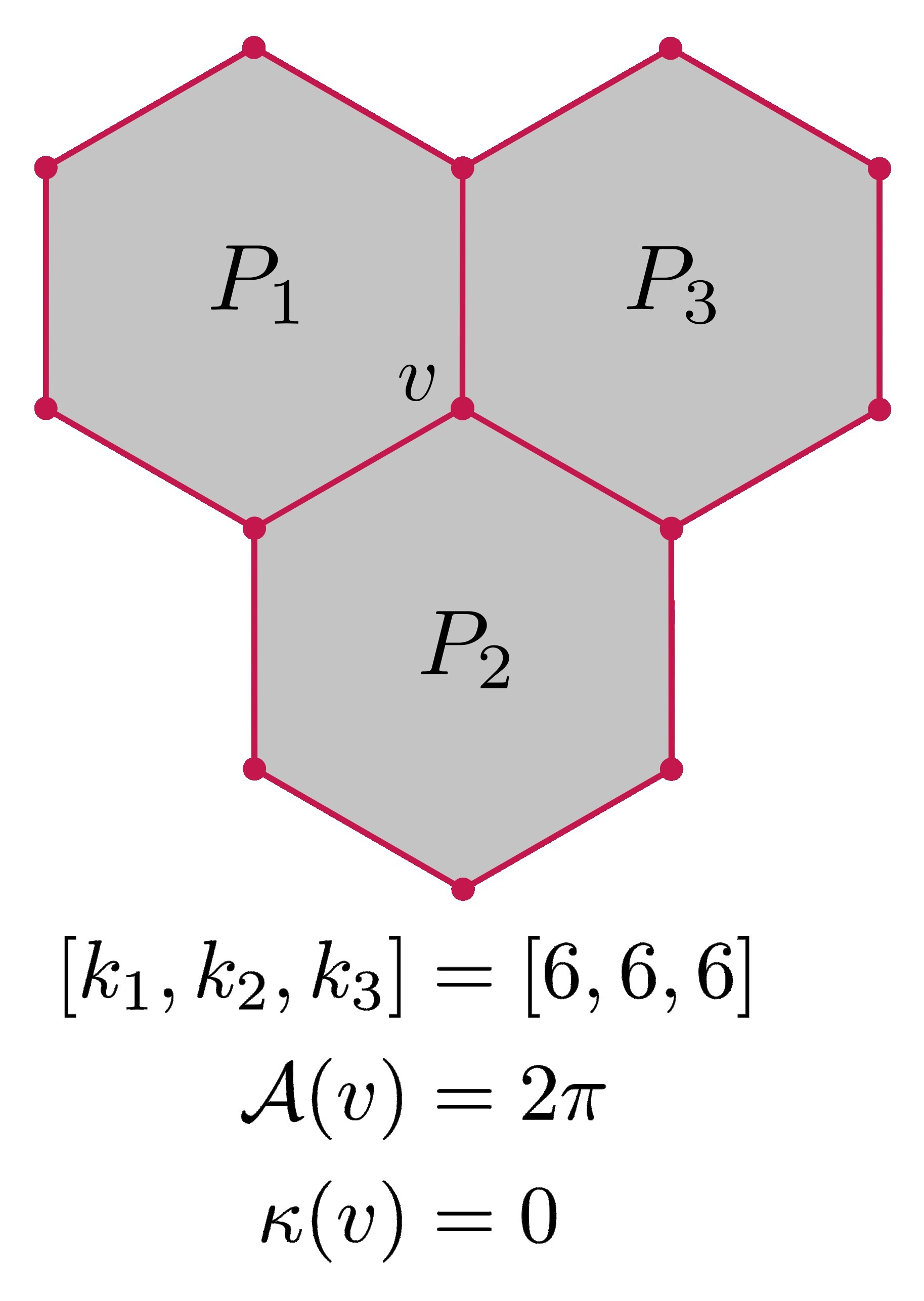}
        \hfill 
        \includegraphics[width=0.31\textwidth]{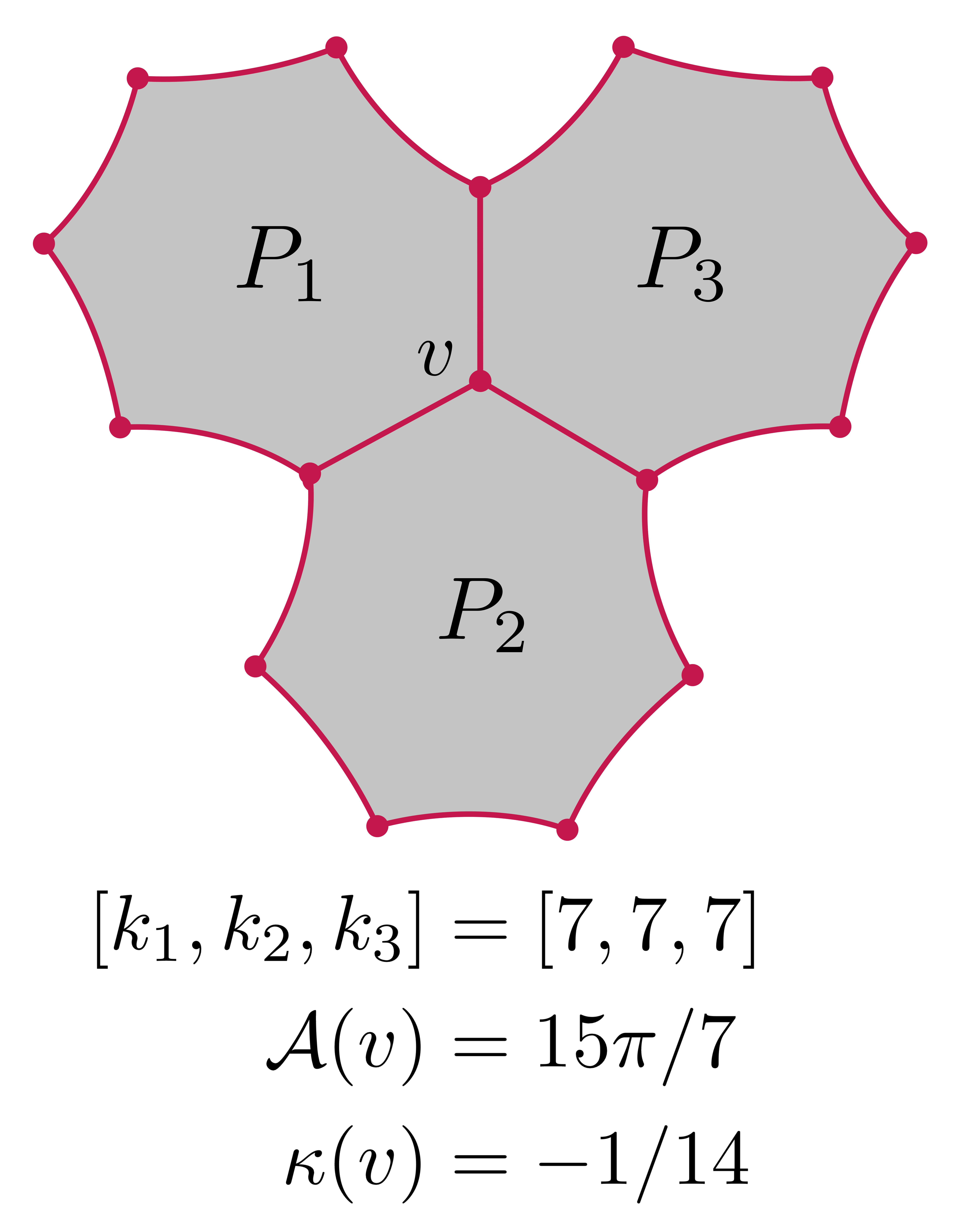} \hfill 
    }
    \caption{Three different vertex-types around a vertex $v$.}
    \label{fig:vertex_type}
\end{figure}

The following result relates the combinatorial curvatures of an oriented regular polygonal surface $E$ to the complex structure of $E$ under a mild extra assumption.
\begin{Theorem}\label{thm:Theorem_regular_polygonal_type}
    Suppose $E$ is an oriented regular polygonal surface. Let $\kappa(v)$ denote the combinatorial curvature at the vertex $v$ of $E$ and $\widetilde E$ be the universal cover of the Riemann surface $E$.
    \begin{enumerate}
        \item[(a)] If $\kappa(v)$ is strictly positive at each vertex and there is a constant $N$ such that each polygon in $E$ has at most $N$ sides, then $E$ is elliptic and $E = \hat{\mathbb C}$. 
        \item[(b)] If $\kappa(v)$ is zero at each vertex, then $E$ is parabolic and $\widetilde{E} = \mathbb C$.
        \item[(c)] If $\kappa(v)$ is strictly negative at each vertex and there is a constant $N$ such that each polygon in $E$ has at most $N$ sides, then $E$ is hyperbolic and $\widetilde{E} = \mathbb D$.
    \end{enumerate}
\end{Theorem}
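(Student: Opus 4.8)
The plan is to equip $E$ with its intrinsic piecewise-flat cone metric, in which every tile is a genuine unit regular Euclidean polygon and the only metric singularities are cone points at the vertices. At a vertex $v$ the cone angle equals the angle-sum $\mathcal A(v)$, so the concentrated Gaussian curvature there is $2\pi-\mathcal A(v)=2\pi\,\kappa(v)$; thus the three sign hypotheses on $\kappa$ are exactly sign hypotheses on the curvature of this metric, whose conformal structure coincides with the Riemann surface structure of $E$ from Subsection~\ref{subsec:complex_structure}. I would first reduce to the simply connected case: the universal cover $\widetilde E$ pulls the tiling back to a regular polygonal surface whose vertex-types, and hence all values of $\kappa$, are identical to those of $E$, and which is again a flat cone surface, so the task becomes to decide whether $\widetilde E\in\{\hat{\mathbb C},\mathbb C,\mathbb D\}$. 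Two facts drive everything: the combinatorial Gauss--Bonnet identity $\sum_v \kappa(v)=\chi$, and the observation that the bound $N$ together with the automatic degree bounds below gives $E$ (and $\widetilde E$) \emph{bounded geometry}, which is what lets discrete type statements transfer to conformal ones.

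For part (b), $\kappa\equiv 0$ forces $\mathcal A(v)=2\pi$ at every vertex, so the cone metric is genuinely smooth and $E$ is a complete flat surface; moreover $\mathcal A(v)\ge \deg(v)\,\pi/3$ gives $\deg(v)\le 6$ and only finitely many vertex-types occur, so no extra hypothesis is needed. The universal cover is then a simply connected complete flat surface, hence isometric to the Euclidean plane and conformally $\mathbb C$, giving parabolicity. For part (a), $\kappa(v)>0$ forces $\mathcal A(v)<2\pi$, whence $\deg(v)\le 5$; combined with the side-bound $N$ this leaves only finitely many admissible vertex-types, so $\kappa(v)\ge\kappa_0$ for a uniform $\kappa_0>0$. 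I then apply Stone's combinatorial analogue of Myers' theorem (1976): a complete complex with combinatorial curvature bounded below by a positive constant is finite. Applied to $\widetilde E$, this forces it to be a compact, simply connected, oriented surface, i.e.\ topologically $S^2$, and a genus-zero compact Riemann surface is $\hat{\mathbb C}$; hence $E=\hat{\mathbb C}$ is elliptic (equivalently, finiteness plus $\sum_v\kappa(v)=\chi(E)>0$ forces $\chi(E)=2$).

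For part (c), $\kappa(v)<0$ need not bound the degree, but the side-bound $N$ makes every $\kappa(v)$ a rational number whose denominator divides a fixed integer depending only on $N$; lying in a fixed discrete set and being negative, the values satisfy $\kappa(v)\le -\kappa_0$ for a uniform $\kappa_0>0$. I would then invoke the isoperimetric consequences of negative combinatorial curvature from Oh (2005): a uniform negative upper bound on $\kappa$ yields a strong (linear) isoperimetric inequality, equivalently a positive Cheeger constant, for the underlying graph. Under the bounded geometry guaranteed by $N$, such an inequality rules out parabolicity and forces $\widetilde E$ to be conformally the disc $\mathbb D$; the compact subcase is immediate, since there $\sum_v\kappa(v)=\chi(E)<0$ gives genus $\ge 2$ and uniformization yields $\widetilde E=\mathbb D$.

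The main obstacle I anticipate is the final transfer step in parts (a) and (c): passing rigorously from the purely combinatorial conclusions---finiteness via Stone, a strong isoperimetric inequality via Oh---to the conformal type of the Riemann surface $E$. This requires comparing the combinatorial graph metric with the flat cone metric and its conformal structure, and it is precisely here that the uniform bound $N$ is essential, since it secures the bounded geometry needed for a Kanai-type quasi-isometry invariance of parabolicity. Verifying that the strong isoperimetric inequality genuinely certifies conformal hyperbolicity of $E$, rather than merely transience of the graph, is the delicate analytic heart of the argument.
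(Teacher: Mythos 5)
Your reduction to the universal cover, your use of the combinatorial Gauss--Bonnet identity, and your treatment of part (b) (angle-sum $2\pi$ gives a smooth complete flat metric; a simply connected complete flat surface is isometric, hence conformal, to $\mathbb C$) all match the paper. For part (a) you correctly observe that the side-bound $N$ plus $\deg(v)\le 5$ leaves finitely many vertex-types and hence a uniform $\kappa_0>0$, and the deduction compact $+$ simply connected $+$ oriented $\Rightarrow \hat{\mathbb C}$ is the paper's. But you invoke Stone's finiteness theorem as a black box, and that theorem is precisely what the paper spends Section \ref{sec:positive_case} proving: it replaces each unit Euclidean $n$-gon by a unit regular $r$-spherical $n$-gon with $r$ large enough that every $r$-spherical angle-sum drops below $2\pi$, shows minimizing paths then avoid the cone points, and applies the smooth Bonnet--Myers theorem to $E_r\setminus V$. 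As a proof of 1(a) your write-up is therefore a citation rather than an argument, though a legitimate one.

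The genuine gap is in part (c), and it sits exactly where you flag it. You propose to pass from a positive Cheeger constant of the $1$-skeleton to conformal hyperbolicity via a Kanai-type quasi-isometry argument ``under the bounded geometry guaranteed by $N$.'' But $N$ does not give bounded geometry in the sense those theorems require: the hypothesis $\kappa(v)<0$ with $k_i\le N$ permits vertices of arbitrarily large degree, so the graph need not have bounded valence, and the cone metric has cone angles $\mathcal A(v)>2\pi$ that can be arbitrarily large, so there is no uniform lower curvature bound or injectivity radius bound for the surface. Kanai's rough-isometry invariance of parabolicity is not available in this setting without substantial extra work. The paper avoids quasi-isometry invariance entirely: it builds a continuous conformal metric $\varrho(z)^2|dz|^2=|z|^{(\mathcal A(v)-2\pi)/\pi}|dz|^2$ near each vertex (Lemma \ref{thm:Lemma_conformal_metric}), upgrades Higuchi's combinatorial inequality $|F(H)|\le c|E(\partial H)|$ to a genuine linear isoperimetric inequality $\operatorname{Area}_\rho(D)\le c\operatorname{len}_\rho(\partial D)$ for arbitrary Jordan domains (Propositions \ref{thm:Proposition_intermediate_isoperimetric} and \ref{thm:Proposition_Jordan_isoperimetric}, using only that polygon areas are bounded above and edge lengths below in terms of $N$), and then concludes hyperbolicity directly by Ahlfors' length--area criterion (Proposition \ref{thm:Proposition_Ahlfors}). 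Until you either supply that analytic bridge or justify the quasi-isometry step despite the unbounded degrees, part (c) of your argument is incomplete.
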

To our knowledge, Theorem \hyperref[thm:Theorem_regular_polygonal_type]{1(a)} was first proved by Stone \cite{stone1976combinatorial,stone1976correction,stone1976geodesics}. Theorem \hyperref[thm:Theorem_regular_polygonal_type]{1(b)} boils down to a result in Riemannian geometry which states that if a simply-connected, complete $n$-dimensional Riemannian manifold is flat, then it is isometric to the Euclidean space $\mathbb R^n$. Theorem \hyperref[thm:Theorem_regular_polygonal_type]{1(c)} follows as a corollary of the work of Oh \cite{oh2005aleksandrov} on hyperbolicity of Aleksandrov surfaces. 

In Sections $3$ to $5$ we provide an exposition of the proof of Theorem \ref{thm:Theorem_regular_polygonal_type} closely following Stone and Oh. We point out that in \cite{stone1976geodesics}, Stone, in fact, proves a more general result for piece-wise linear manifolds of arbitrary dimension satisfying a suitable curvature positivity condition. An advantage of restricting to the special case of regular polygonal surfaces is that the proof simplifies greatly and illuminates the key ideas without getting distracted by the lengthy formalization of curvature for piece-wise linear manifolds.

We remark that the more recent work of DeVos and Mohar \cite{devos2007analogue} shows that the hypothesis of Theorem \hyperref[thm:Theorem_regular_polygonal_type]{1(a)} can be relaxed in two different directions. First, instead of requiring that the combinatorial curvature is strictly positive at each vertex, it suffices to assume that the combinatorial curvature is strictly positive at all but finitely many vertices. A different relaxation is that the assumption about the about the upper bound $N$ on the number of sides of a polygons can be removed. On the other hand, in the case of Theorem \hyperref[thm:Theorem_regular_polygonal_type]{1(c)}, we do not know whether the assumption about the upper bound $N$ on the number of sides of polygons can be removed.
\begin{Question}
    Is there an example of a regular polygonal surface where the combinatorial curvature at each vertex is strictly negative but whose universal cover is $\mathbb C$? (Note that such a surface must necessarily have polygons with arbitrarily large number of sides.)
\end{Question}
\begin{Remark}
    Theorem \hyperref[thm:Theorem_regular_polygonal_type]{1(a)} is a ``discrete" analog of the Bonnet-Myers theorem in Riemannian geometry which asserts that if a $2$-dimensional complete Riemannian manifold has Gaussian curvatures uniformly bounded below by a positive constant, then the manifold is compact; see, for example, \cite[Theorem 11.7]{lee2006riemannian}. On a similar note, Theorem \hyperref[thm:Theorem_regular_polygonal_type]{1(b)} and \hyperref[thm:Theorem_regular_polygonal_type]{1(c)} are analogous to a result of Milnor \cite{milnor1977deciding} which roughly states that if a $2$-dimensional simply-connected open Riemannian manifold is complete, rotationally symmetric, and has Gaussian curvatures $K$ that is sufficiently bounded below (resp. sufficiently negative), that is, $K\geq -1/(r^2 \log r)$ (resp. $K\leq -(1+\epsilon)/(r^2 \log r)$), then the manifold is conformally isomorphic to the complex plane $\mathbb C$ (resp. open unit disc $\mathbb D$) (see also \cite{doyle1988deciding}, \cite{grigor1985existence} and \cite{kenneth2002masters}). 
\end{Remark}
\subsection{Application to conformal tilings}
A \textit{conformal tiling} of a Riemann surface $X$ is a pair $\mathcal{T}=(E,g)$, where $E$ is an oriented regular polygonal surface and $g \colon E \rightarrow X$ is a conformal isomorphism between the two Riemann surfaces. The tiles of the conformal tiling $\mathcal{T}$ are the images $\{g(P_\alpha)\}_{\alpha \in I}$ of the regular Euclidean polygons $\{P_\alpha\}_{\alpha \in I}$ in the polygonal surface $E$. Conformal tilings were introduced by Bowers and Stephenson \cite{bowers1997regular,bowers2017conformal,bowers2019conformal}.%
\footnote{We point out that Bowers and Stephenson give two different complex structures for a conformal tiling: (i) reflective $\beta$-equilateral structure and (ii) regular piece-wise affine (RPWA) structure. We use the RPWA structure because in this case, the combinatorially defined angle-sum $\mathcal{A}(v)$ is concretely realized as the sum of angles of the polygons around the vertex $v$. Despite this choice, when there is an upper bound on the number of sides of the polygons/tiles in the conformal tiling, the reflective $\beta$-equilateral structure and the RPWA structure are quasi-conformal to each other (see \cite[Appendix A]{bowers2017conformal}). In particular, Theorem \ref{thm:Theorem_conformal_tiling_type} applies to both the complex structures under the extra assumption of the upper bound on the size of tiles.} %
As a first example, note that the regular polygonal surface $E$ obtained from the dodecahedron is conformally isomorphic to the Riemann sphere via a map $g \colon E \rightarrow \hat{\mathbb{C}}$ because of the uniformization theorem. Hence, the dodecahedron gives a conformal tiling $\mathcal{T}:= (E,g)$ of the Riemann sphere $\hat{\mathbb{C}}$. 

In \cite[Figure 1]{bowers1997regular}, the authors construct a striking example of a conformal tiling of the complex plane using ``conformally regular pentagons"; in our notation, the Riemann surface $X$ is $\mathbb C$ and the conformal tiling is $\mathcal{T}=(E,g\colon E \rightarrow \mathbb C)$, where $E$ is a regular polygonal surface consisting of only regular pentagons. This example shows that conformal tilings are somewhat more flexible than regular geometric tilings\footnote{A tiling of a surface $S$ is called a \textit{geometric tiling} if $S$ admits a Riemannian metric of constant curvature with respect to which each edge is a geodesic segment. Further, such a tiling is called \textit{regular} if each edge has the same length and all the interior angles of a given tile are equal (cf. \cite{edmonds1982regular}).} -- the Euclidean plane $\mathbb R^2$ does not have a tiling using regular pentagons, but there is a conformal tiling of the complex plane $\mathbb C$ using conformally regular pentagons. For more examples of conformal tilings, see \cite[Section 3]{bowers2017conformal} and \cite[Figure 1]{bishop2021non}.

Similar to regular polygonal surfaces, a conformal tiling $\mathcal{T}=(E,g)$ is completely determined, up to isomorphism, by the combinatorics of the embedded graph in $X$ obtained by taking the image of the edges and vertices of the regular polygonal surface $E$ under the homeomorphism $g \colon E \rightarrow X$. This graph can also be obtained by taking the boundaries of all the tiles in $X$, and is sometimes called the \textit{$1$-skeleton} of the tiling $\mathcal{T}$.

Given a conformal tiling $\mathcal{T}$ of a Riemann surface $X$, Theorem \ref{thm:Theorem_conformal_tiling_type} below establishes a relationship between the local combinatorics of the $1$-skeleton of $\mathcal{T}$ and the complex structure of the underlying Riemann surface $X$. Note that the vertices of the $1$-skeleton of a conformal tiling $\mathcal{T}=(E,g)$ is just the image of the vertices of $E$ under the conformal map $g$. Similarly, the combinatorial curvature at a vertex $v$ of $\mathcal{T}$ is just the combinatorial curvature at the vertex $g^{-1}(v)$ of $E$.
\begin{Theorem}\label{thm:Theorem_conformal_tiling_type}
    Suppose $\mathcal{T} = (E,g)$ is a conformal tiling of a Riemann surface $X$. Let $\kappa(v)$ denote the combinatorial curvature at the vertex $v$ of $\mathcal{T}$ and $\widetilde X$ be the universal cover of the Riemann surface $X$.
    \begin{enumerate}
        \item[(a)] If $\kappa(v)$ is strictly positive at each vertex and there is a constant $N$ such that each tile in $\mathcal{T}$ has at most $N$ sides, then $X$ is elliptic, and $X = \hat{\mathbb C}$. 
        \item[(b)] If $\kappa(v)$ is zero at each vertex, then $X$ is parabolic, and $\widetilde{X} = \mathbb C$.
        \item[(c)] If $\kappa(v)$ is strictly negative at each vertex and there is a constant $N$ such that each tile in $\mathcal{T}$ has at most $N$ sides, then $X$ is hyperbolic, and $\widetilde{X} = \mathbb D$.
    \end{enumerate}
\end{Theorem}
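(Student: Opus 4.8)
The plan is to deduce Theorem~\ref{thm:Theorem_conformal_tiling_type} directly from Theorem~\ref{thm:Theorem_regular_polygonal_type} by exploiting the fact that $g\colon E \to X$ is a conformal isomorphism, so that $E$ and $X$ are conformally indistinguishable as Riemann surfaces. First I would translate the hypotheses from $\mathcal{T}$ back to $E$. By the definition recalled just above the statement, the combinatorial curvature at a vertex $v$ of $\mathcal{T}$ equals the combinatorial curvature at $g^{-1}(v)$ of $E$, so the sign conditions on $\kappa$ (strictly positive, zero, or strictly negative at every vertex) hold verbatim for the vertices of $E$. Likewise, since each tile of $\mathcal{T}$ is the image $g(P_\alpha)$ of a polygon $P_\alpha$ of $E$ and therefore has the same number of sides, the uniform bound $N$ on the number of sides of tiles is equivalent to the same bound on the polygons of $E$. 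Thus each of the three hypotheses of Theorem~\ref{thm:Theorem_conformal_tiling_type} is exactly the corresponding hypothesis of Theorem~\ref{thm:Theorem_regular_polygonal_type} applied to $E$.

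Next I would apply Theorem~\ref{thm:Theorem_regular_polygonal_type} to obtain the conformal type of $E$: in case (a) that $E = \hat{\mathbb{C}}$, in case (b) that $\widetilde{E} = \mathbb{C}$, and in case (c) that $\widetilde{E} = \mathbb{D}$. The remaining step is to transport this conclusion across $g$. For case (a) this is immediate, since $g$ is a conformal isomorphism and so $X \cong E = \hat{\mathbb{C}}$, forcing $X = \hat{\mathbb{C}}$; in particular $X$ is elliptic. For cases (b) and (c) I would invoke the principle that the universal cover is a conformal invariant: a conformal isomorphism $g \colon E \to X$ lifts, by covering space theory, to a homeomorphism $\tilde g \colon \widetilde E \to \widetilde X$ of universal covers, and since the covering projections are local conformal isomorphisms, $\tilde g$ is itself a conformal isomorphism. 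Hence $\widetilde X \cong \widetilde E$, which yields $\widetilde X = \mathbb{C}$ in case (b) and $\widetilde X = \mathbb{D}$ in case (c), with the corresponding parabolic and hyperbolic conclusions.

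The proof is essentially a bookkeeping exercise once Theorem~\ref{thm:Theorem_regular_polygonal_type} is granted, so I do not expect a genuine obstacle. The one point that needs a careful, if routine, argument is the conformal invariance of the universal cover, namely that the lift $\tilde g$ is holomorphic with holomorphic inverse; this follows because holomorphy is a local property and the covering maps $\widetilde E \to E$ and $\widetilde X \to X$ are local biholomorphisms, so that $\tilde g$ agrees locally with $g$ precomposed and postcomposed with these projections. Combined with the uniformization theorem, which guarantees that each of the simply connected surfaces $\widetilde E$ and $\widetilde X$ is one of $\hat{\mathbb{C}}$, $\mathbb{C}$, or $\mathbb{D}$, this shows the type is preserved and completes the reduction.
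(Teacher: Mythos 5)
Your proposal is correct and follows exactly the paper's route: the paper likewise deduces Theorem~\ref{thm:Theorem_conformal_tiling_type} by applying Theorem~\ref{thm:Theorem_regular_polygonal_type} to the underlying regular polygonal surface $E$ and transporting the conclusion across the conformal isomorphism $g\colon E \to X$. You merely spell out in more detail the (routine) invariance of the universal cover's conformal type, which the paper leaves implicit.
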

Theorem \ref{thm:Theorem_conformal_tiling_type} follows as a consequence of Theorem \ref{thm:Theorem_regular_polygonal_type}. To see this, let $\mathcal{T}=(E,g)$ be a conformal tiling of a Riemann surface $X$. Then, apply Theorem \ref{thm:Theorem_regular_polygonal_type} on the regular polygonal surface $E$ of $\mathcal{T}$. The result now follows because $E$ and $X$ are conformally isomorphic via the map $g \colon E \rightarrow X$.

Consider the special case where the Riemann surface $X$ of a conformal tiling $\mathcal{T}$ is homeomorphic to the plane $\mathbb R^2$. Now, Theorem \ref{thm:Theorem_conformal_tiling_type} gives a criterion for determining whether $X$ is conformally isomorphic to $\mathbb C$ or $\mathbb D$ based on the combinatorics of $\mathcal{T}$. This answers a special case of the ``type problem’’ for conformal tilings raised by Bowers and Stephenson \cite[Section 5.2]{bowers2017conformal}.

As an interesting corollary of Theorem  \ref{thm:Theorem_conformal_tiling_type}, we obtain that the complex plane and the Riemann sphere do not admit conformal tilings of certain combinatorics. A conformal tiling is called \textit{edge-to-edge} if the intersection of any two tiles contains at most one edge, or equivalently, if any two polygons in the underlying polygonal surface intersect along at most one edge.\footnote{The edge-to-edge condition is commonly used when classifying tilings and polyhedra. For example, the regular polygonal surface obtained by considering exactly two hexagons and gluing the edges pairwise is homeomorphic to the $2$-sphere but is not considered a Platonic solid because it is not edge-to-edge.} Note that in an edge-to-edge tiling, the degree at each vertex is at least $3$.
\begin{Corollary}\label{thm:Corollary_absence_of_tilings}
    \hfill
    \begin{enumerate}
        \item[(a)] The Riemann sphere $\hat{\mathbb{C}}$ does not admit an edge-to-edge conformal tiling where each tile has $6$ or more sides.
        \item[(b)] The complex plane $\mathbb{C}$ does not admit an edge-to-edge conformal tiling where each tile has $7$ or more sides.
    \end{enumerate}
\end{Corollary}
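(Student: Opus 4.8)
The plan is to read the sign of the combinatorial curvature off the hypotheses and then feed it into a Gauss--Bonnet computation for (a) and into Theorem~\ref{thm:Theorem_conformal_tiling_type} for (b). First I would record the arithmetic. In an edge-to-edge tiling every vertex has degree $d:=\deg(v)\ge 3$, so by \eqref{eqn:combi_curv}: if each tile has at least six sides then $\kappa(v)=1-\frac{d}{2}+\sum_{i=1}^{d}\frac1{k_i}\le 1-\frac{d}{2}+\frac{d}{6}=1-\frac{d}{3}\le 0$, while if each tile has at least seven sides then $\kappa(v)\le 1-\frac{d}{2}+\frac{d}{7}=1-\frac{5d}{14}\le-\frac1{14}<0$. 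Thus the six-gon hypothesis forces $\kappa\le 0$ everywhere, and the seven-gon hypothesis forces $\kappa$ to be \emph{uniformly} negative.

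For (a) I would use the combinatorial Gauss--Bonnet identity $\sum_v\kappa(v)=\chi(E)$, which follows by summing \eqref{eqn:combi_curv} over all vertices and applying the handshake lemma together with the fact that each $k$-gon is incident to exactly $k$ vertices. Since $\hat{\mathbb C}$ is compact, any conformal tiling of it is finite and its underlying polygonal surface $E$ is a closed surface homeomorphic to the sphere, so $\chi(E)=2$. But $\kappa(v)\le 0$ at every vertex gives $\sum_v\kappa(v)\le 0<2$, a contradiction, so no such tiling exists.

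For (b) I would argue by contradiction, starting from $\kappa(v)\le-\frac1{14}<0$ at every vertex. If there were also an upper bound $N$ on the number of sides, Theorem~\hyperref[thm:Theorem_conformal_tiling_type]{2(c)} would force $X$ to be hyperbolic with $\widetilde X=\mathbb D$, contradicting $X=\mathbb C$ (which is parabolic, $\widetilde{\mathbb C}=\mathbb C$). To dispense with the bound I would exploit that the curvature is not just negative but bounded away from $0$. Applying the Gauss--Bonnet identity for a combinatorial disc, $\sum_{v\in\mathrm{int}}\kappa(v)+\sum_{v\in\partial}\frac{\pi-\mathcal A(v)}{2\pi}=1$, to an exhaustion of the plane by finite discs $D_n$, and bounding each interior term by $-\frac1{14}$ and each boundary term by $\frac1{7}$ (a boundary vertex meets a tile of angle at least $\pi-\frac{2\pi}{7}$, so $\pi-\mathcal A(v)\le\frac{2\pi}{7}$), yields the linear isoperimetric inequality $|V_{\mathrm{int}}(D_n)|\le 2\,|V_\partial(D_n)|$ for all $n$.

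The main obstacle is to promote this purely combinatorial linear isoperimetric inequality to the conformal conclusion that $E$ (hence $X$) is hyperbolic, \emph{without} a bound on tile size. The naive route through the flat RPWA metric fails: a regular $k$-gon has area of order $k^2$ but only $k$ boundary edges, so the metric isoperimetric profile degenerates as $k\to\infty$ --- this is precisely the difficulty behind the Question raised above, where the weaker hypothesis $\kappa(v)<0$ (permitting $\kappa(v)\to 0$) is not known to force hyperbolicity. The leverage I would exploit is that $k_i\ge 7$ supplies two uniform controls at once: the curvature gap $\kappa\le-\frac1{14}$ (hence the isoperimetric inequality) and the uniform roundness of the tiles, since a unit-side regular $k$-gon has inradius-to-circumradius ratio $\cos(\pi/k)\ge\cos(\pi/7)>0$ and so is a quasidisc with a single constant independent of $k$. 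I expect this uniform conformal roundness to be exactly what allows the combinatorial isoperimetric inequality to be transferred to a conformal one, excluding the parabolic type $\mathbb C$ and closing the contradiction.
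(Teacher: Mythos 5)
Your part (a) is exactly the paper's argument: the paper derives (a) from the discrete Gauss--Bonnet identity (Lemma \ref{thm:Lemma_Gauss_Bonnet}), using $d\ge 3$ and $k_i\ge 6$ to get $\kappa(v)\le 1-\frac{d}{3}\le 0$ at every vertex and contradicting $\chi(\hat{\mathbb C})=2$; nothing more is needed there.

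For part (b), the paper's entire proof is your opening computation followed by a direct appeal to Theorem \hyperref[thm:Theorem_conformal_tiling_type]{2(c)}: every vertex-type $[k_1,\dots,k_d]$ with $d\ge 3$ and $k_i\ge 7$ has $\kappa(v)<0$, hence $X$ is hyperbolic and $X\ne\mathbb C$. The paper does not engage with the bounded-tile-size hypothesis of Theorem \hyperref[thm:Theorem_conformal_tiling_type]{2(c)} at this point, so up to your sentence ``Theorem 2(c) would force $X$ to be hyperbolic'' you have reproduced the intended proof. Everything after that is material the paper does not contain, and it is also where your proposal stops being a proof. You are right that the hypothesis is not automatic, and your own diagnosis of why the $\rho$-metric route degenerates is accurate: a single unit-side regular $k$-gon has $\rho$-area of order $k^2$ against perimeter $k$, so the linear isoperimetric inequality $\operatorname{Area}_\rho(D)\le c\operatorname{len}_\rho(\partial D)$ is simply false, for any uniform $c$, on a family of single tiles of unbounded size --- this is exactly where Proposition \ref{thm:Proposition_intermediate_isoperimetric} uses the constant $N$. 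The ``uniform roundness'' $\cos(\pi/k)\ge\cos(\pi/7)$ does nothing to repair this, since the offending Jordan domain is a single round tile; hence the transfer you ``expect'' from the combinatorial isoperimetric inequality to a conformal one cannot proceed via Ahlfors' criterion with the metric of Lemma \ref{thm:Lemma_conformal_metric}, and would need a different conformal metric or a different hyperbolicity criterion, neither of which you supply. In short: to match the paper, stop after the curvature computation and cite Theorem \hyperref[thm:Theorem_conformal_tiling_type]{2(c)}; your attempt to dispense with the bounded-size hypothesis flags a genuine subtlety that the paper glosses over, but the closing step of your version of (b) is an unproved assertion, i.e., a gap.
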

\begin{Corollary}\label{thm:Corollary_absence_of_triangulations}
    Suppose $\mathcal{T}$ is a conformal tiling of a Riemann surface $X$ where all the tiles have exactly $3$ sides, that is, all the tiles are conformal triangles. Let $\widetilde{X}$ be the universal cover of $X$. 
    \begin{enumerate}
        \item[(a)] If the degree at each vertex of $\mathcal{T}$ is strictly lesser than $6$, then $X$ is elliptic, and $X = \hat{\mathbb{C}}$.
        \item[(b)] If the degree at each vertex of $\mathcal{T}$ is exactly equal to $6$, then $X$ is parabolic, and $\widetilde{X} = \mathbb{C}$.
        \item[(c)] If the degree at each vertex of $\mathcal{T}$ is strictly greater than $6$, then $X$ is hyperbolic, and $\widetilde{X} = \mathbb{D}$.
    \end{enumerate}
\end{Corollary}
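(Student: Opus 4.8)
The plan is to reduce Corollary \ref{thm:Corollary_absence_of_triangulations} to an immediate application of Theorem \ref{thm:Theorem_conformal_tiling_type} by evaluating the combinatorial curvature $\kappa(v)$ explicitly in the special case where every tile is a conformal triangle. Since the combinatorial curvature of $\mathcal{T}$ at a vertex $v$ is by definition the combinatorial curvature of the underlying regular polygonal surface $E$ at $g^{-1}(v)$, I would work directly with the formula \eqref{eqn:combi_curv}.

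First I would record the key simplification. Because all tiles have exactly three sides, every entry $k_i$ in the vertex-type of $v$ equals $3$. Moreover, since $E$ is a surface without boundary, the edges and tile-corners alternate around $v$, so the number $d$ of tile-corners at $v$ equals the number of edges at $v$, namely $\deg(v)$. Substituting $k_i = 3$ and $d = \deg(v)$ into \eqref{eqn:combi_curv} gives
\[
\kappa(v) = 1 - \frac{\deg(v)}{2} + \sum_{i=1}^{\deg(v)} \frac{1}{3} = 1 - \frac{\deg(v)}{6} = \frac{6 - \deg(v)}{6}.
\]
In particular, the sign of $\kappa(v)$ is governed entirely by the comparison of $\deg(v)$ with $6$: one has $\kappa(v) > 0$, $\kappa(v) = 0$, or $\kappa(v) < 0$ according as $\deg(v) < 6$, $\deg(v) = 6$, or $\deg(v) > 6$.

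With this identity in hand, each of the three cases follows at once. For the hypotheses of Theorem \ref{thm:Theorem_conformal_tiling_type}(a) and (c), I note that the uniform upper bound on the number of sides is automatic, since every tile has exactly $3$ sides and we may take $N = 3$. Thus, if $\deg(v) < 6$ at every vertex then $\kappa(v) > 0$ everywhere and $N = 3$, so Theorem \ref{thm:Theorem_conformal_tiling_type}(a) yields that $X$ is elliptic with $X = \hat{\mathbb{C}}$; if $\deg(v) = 6$ at every vertex then $\kappa(v) = 0$ everywhere and Theorem \ref{thm:Theorem_conformal_tiling_type}(b) yields that $X$ is parabolic with $\widetilde{X} = \mathbb{C}$; and if $\deg(v) > 6$ at every vertex then $\kappa(v) < 0$ everywhere and $N = 3$, so Theorem \ref{thm:Theorem_conformal_tiling_type}(c) yields that $X$ is hyperbolic with $\widetilde{X} = \mathbb{D}$.

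Since all the analytic content is already packaged in Theorem \ref{thm:Theorem_conformal_tiling_type}, there is no serious obstacle in this argument; the only point that warrants care is the combinatorial bookkeeping identifying the number of tile-corners at $v$ with $\deg(v)$, which is exactly what makes the curvature formula collapse to the clean expression $(6 - \deg(v))/6$.
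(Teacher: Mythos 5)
Your proposal is correct and follows exactly the route the paper intends: substituting $k_i=3$ and $d=\deg(v)$ into equation \eqref{eqn:combi_curv} to get $\kappa(v)=(6-\deg(v))/6$, taking $N=3$ as the uniform bound on tile sizes, and invoking the corresponding case of Theorem \ref{thm:Theorem_conformal_tiling_type}. The paper only sketches this computation (it works out the analogous calculation in detail for Corollary \hyperref[thm:Corollary_absence_of_tilings]{1(b)} instead), so there is nothing to add.
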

Corollary \ref{thm:Corollary_absence_of_tilings} and \ref{thm:Corollary_absence_of_triangulations} are proved by calculating the angle-sum and combinatorial curvature at each vertex. More precisely, Corollary \hyperref[thm:Corollary_absence_of_tilings]{1(a)} follows from a discrete analog of Gauss-Bonnet (Lemma \ref{thm:Lemma_Gauss_Bonnet}) and the others follow from Theorem \ref{thm:Theorem_regular_polygonal_type}. For example, to prove Corollary \hyperref[thm:Corollary_absence_of_tilings]{1(b)}, let $\mathcal{T}$ be an edge-to-edge conformal tiling of a Riemann surface $X$ where each tile has at least $7$ sides. In an edge-to-edge tiling the degree at each vertex is at least $3$. It follows that the vertex-type at each vertex of $\mathcal{T}$ is of the form $[k_1,k_2,\dots,k_d]$, where $d\geq 3$ and $k_1,k_2,k_3 \geq 7$. Using equation \ref{eqn:angle_sum_combinatorial_curvature}, note that the angle-sum of such a vertex-type is strictly greater than $2\pi$, or equivalently, the combinatorial curvature is strictly negative. Hence, by Theorem \ref{thm:Theorem_conformal_tiling_type} it follows that $X$ is hyperbolic. This shows that $X \neq {\mathbb C}$ and completes the proof of Corollary \hyperref[thm:Corollary_absence_of_tilings]{1(b)}.  

Corollary \ref{thm:Corollary_absence_of_tilings} is similar in spirit to the following facts: (a) there is no Archimedean solid in which each polygon has $6$ or more sides and (b) there is no Archimedean tiling of the Euclidean plane in which each polygon has $7$ or more sides. Even though conformal tilings are somewhat flexible, Corollary \ref{thm:Corollary_absence_of_tilings} and \ref{thm:Corollary_absence_of_triangulations} show that the combinatorial structure of the tiling imposes a strong influence on the complex structure of the underlying Riemann surface. In this spirit, we ask the following question.
\begin{Question}\label{thm:Question_n_4_surfaces}
    Given a natural number $n\geq 3$, which Riemann surfaces admit an edge-to-edge\footnote{Without the edge-to-edge condition, the set of Riemann surfaces that can be conformally tiled using $n$-sided tiles is the same as the set of Riemann surfaces that can be conformally tiled using triangles ($n=3$). This is because a Riemann surface $X$ has a conformal tiling using only $n$-sided tiles if and only if $X$ admits a Belyi function and the latter does not depend on the number $n$ (see \cite[Proposition 2.7]{bishop2021non}). The edge-to-edge condition constricts the set of Riemann surfaces that admit a conformal tiling using only $n$-sided tiles (cf. Corollary \ref{thm:Corollary_absence_of_tilings}).} conformal tiling using only $n$-sided tiles?
\end{Question}
For triangles ($n=3$), this question has been answered completely. The answer comes in two pieces -- the compact case and the non-compact case. For the compact case, note there are only countably many compact regular polygonal surfaces and hence there is at most a countable number of compact Riemann surfaces that have a conformal tiling using triangles. Furthermore, Belyi's theorem provides a complete characterization: a compact Riemann surface $X$ admits a conformal tiling using triangles if and only if $X$ can be defined as the zero loci of a set of polynomials having coefficients in the algebraic numbers $\overline{\mathbb{Q}}$ (see \cite[Theorem 4]{belyi1979galois} and \cite[Theorem 1.3]{jones2016dessins}). For the non-compact case, the recent work of Bishop and Rempe \cite{bishop2021non} shows that every non-compact Riemann surface has a conformal tiling using triangles. Question \ref{thm:Question_n_4_surfaces} asks for analogous results when $n\geq 4$. We aim to address this question in a forthcoming article.
\begin{Remark}
    Conformal tilings are closely related to the notion of circle packings, whose study was initiated by Thurston (see \cite{bowers2017conformal}).  Corollary \ref{thm:Corollary_absence_of_triangulations} is analogous to the results of He and Schramm in the study of circle packing which assert parabolicity (resp. hyperbolicity) of circle packings if the degree at each vertex is lesser than or equal to $6$ (resp. strictly greater than $6$) \cite[Theorem 10.1 and 10.2]{he1995hyperbolic}; see also \cite[Theorem 1]{beardon1991circle}. Furthermore, in the same article, He and Schramm provide a general parabolicity and hyperbolicity criterion based on whether the contact graph of the circle packing satisfies certain isoperimetric inequalities. The proof of Theorem \hyperref[thm:Corollary_absence_of_tilings]{1(b)} in this article also proceeds by establishing an isoperimetric inequality.
\end{Remark}

\noindent\textbf{Organization of the article.} Section \ref{sec:preliminaries} discusses three preliminaries: the complex structure and conformal metric of an oriented regular polygonal surface and the existence of universal coverings of regular polygonal surfaces. The proof of Theorem \ref{thm:Theorem_regular_polygonal_type} proceeds case by case depending on whether the combinatorial curvature is positive, zero, or negative. Each case is discussed in a separate section (Sections \ref{sec:positive_case}, \ref{sec:zero_case}, and \ref{sec:negative_case}).

\medskip
\noindent\textbf{Acknowledgements.} This article arose from a question raised by my advisor, Subhojoy Gupta, about the generalization of the result of Bishop and Rempe (2021) to other regular polygons. I am grateful to Subhojoy Gupta for generously sharing his time and ideas and closely guiding me throughout this project. I thank Kishore Vaigyanik Protsahan Yojana (KVPY) and Innovation in Science Pursuit for Inspired Research (INSPIRE) for the undergraduate fellowship and contingency grant. Also, I thank Aritra Chatterjee, Sumanta Das, and Ajay Kumar Nair for helpful discussions.

\section{Preliminaries}\label{sec:preliminaries}

\noindent{\textbf{A technical remark and some notation.}} Recall that a polygonal surface $E$ is defined to be $\cup_\alpha P_\alpha/{\sim}$ where each $P_\alpha$ is a Euclidean polygon and $\sim$ denotes the identification of the edges. In particular, $P_\alpha$ is a subset of $\mathbb R^2$. The key point is that inclusion $\iota: P_\alpha \hookrightarrow E= \cup_\alpha P_\alpha/{\sim}$ might not be injective because of the identification of edges. For example, a polygonal surface homeomorphic to the torus can be constructed by identifying the opposite sides of a square. In this case, the inclusion of the square in the polygonal surface is not injective. However, the inclusion $\iota: \operatorname{Int}(P_\alpha) \hookrightarrow E= \cup_\alpha P_\alpha/{\sim}$ is always an injective embedding. Hence, we use $\operatorname{Int}(P_\alpha)$ interchangeably with its image $\iota(\operatorname{Int}(P_\alpha))$ in $E$. We caution that in many cases $\iota(\operatorname{Int}(P_\alpha)) \neq \operatorname{Int}(\iota(P_\alpha))$. Also, given a tile $T$ of a tiling, we write $\operatorname{Int}(T)$ to mean the set obtained by removing the vertices and edges of $T$; note that this might be smaller than the topological interior $T \setminus \partial T$. 

Similarly, given an edge $e$ of a Euclidean polygon $P_\alpha$, the inclusion $\iota \colon e \hookrightarrow E= \cup_\alpha P_\alpha/{\sim}$ may not be injective, but the inclusion $\iota \colon \operatorname{Int}(e) \hookrightarrow E= \cup_\alpha P_\alpha/{\sim}$ is always an injective embedding. Thus, we use $\operatorname{Int}(e)$ interchangeably with $\iota(\operatorname{Int}(e))$. Also, given an edge $e$ of a tiling or a graph embedded in a surface, we write $\operatorname{Int}(e)$ to mean the set obtained by removing the vertices $e$; note that this is different from the topological interior. We extend this notion further: given an edge $e$ in $E$ that is adjacent to $P_\alpha$ and $P_\beta$, we write $\operatorname{Int}(P_\alpha \cup e \cup P_\beta)$ to mean the union $\operatorname{Int}(P_\alpha) \cup \operatorname{Int}(e) \cup \operatorname{Int}(P_\beta)$. Again, we caution that $\operatorname{Int}(P_\alpha \cup e \cup P_\beta)$ might be different from the topological interior of $P_\alpha \cup e \cup P_\beta$. 

\subsection{Complex structure of an oriented regular polygonal surface}\label{subsec:complex_structure} 
The complex structure on an oriented regular polygonal surface $E$ is specified by the following collection of complex charts.
\begin{figure}[t!]
    \centering
    {
        \includegraphics[width=0.7\textwidth]{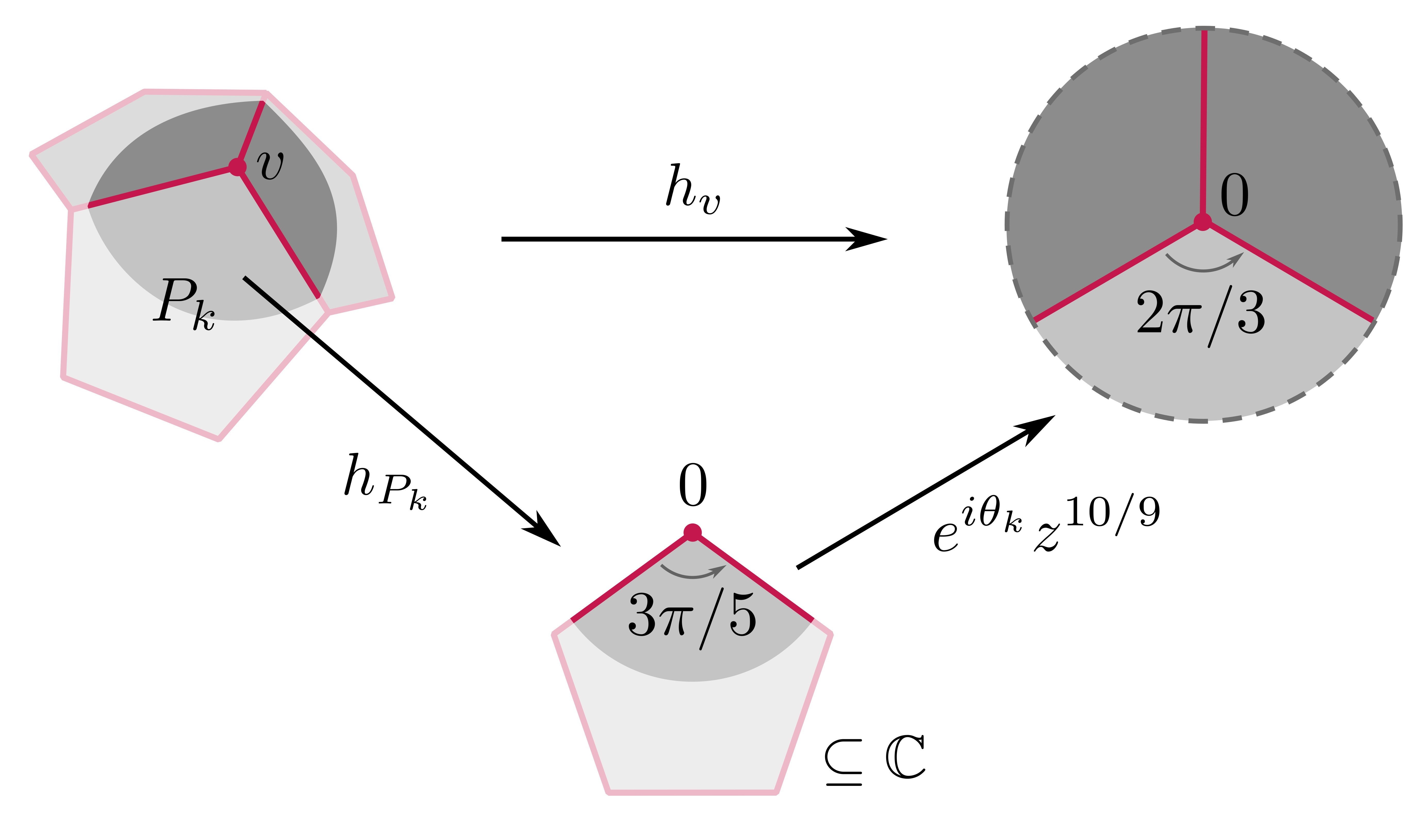}
    }
    \caption{Complex chart at a vertex with vertex-type $[5,5,5]$ defined using ``power maps".}
    \label{fig:vertex_chart}
\end{figure}
\begin{itemize}
    \item For each polygon $P_\alpha$ in $E$, the set $\operatorname{Int}(P_\alpha)$ can be canonically embedded in $\mathbb C$, because each unit regular polygon $P_\alpha$ is a closed subset of $\mathbb R^2$. This defines a chart $h_\alpha \colon \operatorname{Int}(P_\alpha) \rightarrow \mathbb C$. Of course, post-composing $h_\alpha$ with any orientation-preserving isometry of $\mathbb C = \mathbb R^2$ gives another compatible chart. We mention that $h_\alpha$ is chosen so that it is orientation-preserving with respect to the orientations of $E$ and $\mathbb C$.
    \item Let $e$ be an edge that is adjacent to $P_\alpha$ and $P_\beta$. We define a chart $h_{\alpha\beta} \colon \operatorname{Int}(P_\alpha \cup e \cup P_\beta) \rightarrow \mathbb C$ by combining the two charts $h_\alpha \colon \operatorname{Int}(P_\alpha) \rightarrow \mathbb C$ and $h_\beta \colon \operatorname{Int}(P_\beta) \rightarrow \mathbb C$. More elaborately, post compose $h_\alpha$ by an orientation-preserving isometry to obtain another chart $h'_{\alpha}$ that satisfies  $\lim_{x \rightarrow x_0} h'_\alpha(x)= \lim_{x \rightarrow x_0} h_\beta(x)$ for all points $x_0 \in \operatorname{Int}(e)$. This condition along with the fact that $h'_\alpha$ and $h_\beta$ are orientation-preserving allows us to glue $h'_\alpha$ and $h_\beta$ to obtain a chart $h_{\alpha\beta} \colon \operatorname{Int}(P_\alpha \cup e \cup P_\beta) \rightarrow \mathbb C$.
    \item Lastly, charts can be defined at the vertices using \textit{power maps} $z \mapsto z^{2\pi/\mathcal{A}(v)}$ that normalize the cone angle $\mathcal{A}(v)$ at $v$ to the angle $2\pi$ (see Figure \ref{fig:vertex_chart}). For a more elaborate description, let $v$ be a vertex, $\mathcal A(v)$ be the angle-sum at $v$, and $P_1,P_2,\dots,P_d$ be the polygons cyclically arranged around $v$. Next, choose a small neighborhood $N_v$ of $v$ in $E$ and define $h_{v,k} \colon N_v\cap \operatorname{Int}(P_k) \rightarrow \mathbb C$ as 
    \[h_{v,k}(z):= e^{i\theta_k}z^{2\pi/\mathcal{A}(v)}.\]
    Here, we have identified the domain $N_v\cap \operatorname{Int}(P_k)$ with a subset of $\mathbb C$ using $h_{P_k}$. Moreover, $h_{P_k}$ is chosen such that the vertex $v$ corresponds to $0$ in $\mathbb C$. Hence, $N_v\cap \operatorname{Int}(P_k)$ is the intersection of an open neighborhood of $0$ with an open sector. Next, the angle $\theta_k$ are chosen suitably so that the union $\bigcup_k h_{v,k}(N_v\cap \operatorname{Int}(P_k))$ fills up a neighborhood of $0$ and $h_{v,k} \colon N_v\cap \operatorname{Int}(P_k) \rightarrow \mathbb C$ glue together to give a chart $h_v \colon N_v \rightarrow \mathbb C$. 
\end{itemize}

\subsection{Conformal metric on certain regular polygonal surfaces}
\begin{Lemma}\label{thm:Lemma_flat_conformal_metric}
    Suppose $E$ is an oriented regular polygonal surface with angle-sum equal to $2\pi$ at each vertex. Then, there is a conformal metric $\rho$ on $E$ which restricts to the Euclidean metric $|dz|^2$ on the interior of each polygon $\operatorname{Int}(P_\alpha)$ of $E$. Moreover, $\rho$ is a flat metric.
\end{Lemma}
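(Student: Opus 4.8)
The plan is to define $\rho$ chart-by-chart as the pullback of the standard Euclidean metric $|dz|^2$ and then verify that these local definitions agree on overlaps, so that they patch into a single smooth conformal metric on $E$; flatness will be immediate once we see that $\rho$ equals $|dw|^2$ in \emph{every} chart of the atlas described in Subsection~\ref{subsec:complex_structure}. Concretely, on each $\operatorname{Int}(P_\alpha)$ I would set $\rho := h_\alpha^*(|dz|^2)$. Because the chart $h_\alpha$ is canonical up to post-composition with an orientation-preserving isometry of $\mathbb C$, and the Euclidean metric is invariant under such isometries, this prescription is independent of the choice of $h_\alpha$ and defines a smooth, manifestly flat metric on $\bigcup_\alpha \operatorname{Int}(P_\alpha)$. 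It then remains to check that $\rho$ extends smoothly across edges and across vertices.

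For an edge $e$ adjacent to $P_\alpha$ and $P_\beta$, I would work in the edge chart $h_{\alpha\beta} \colon \operatorname{Int}(P_\alpha \cup e \cup P_\beta) \to \mathbb C$. By construction this chart is obtained by gluing an isometric copy $h'_\alpha$ of $h_\alpha$ to $h_\beta$ along $\operatorname{Int}(e)$, and both $h'_\alpha$ and $h_\beta$ are isometric embeddings into $\mathbb C$. Hence $h_{\alpha\beta}$ is itself an isometric embedding of a full neighborhood of $\operatorname{Int}(e)$ into $(\mathbb C, |dz|^2)$, so $\rho = |dw|^2$ in this chart. In particular the two polygon-definitions of $\rho$ agree across $\operatorname{Int}(e)$ and $\rho$ extends smoothly over the edge.

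The crux is the extension across a vertex $v$, and this is exactly where the hypothesis $\mathcal A(v) = 2\pi$ enters. In the vertex chart the $k$-th sector is described by $w = h_{v,k}(z) = e^{i\theta_k} z^{2\pi/\mathcal A(v)}$, where $z$ is the coordinate coming from the polygon chart $h_{P_k}$ with $v \mapsto 0$. Since $\mathcal A(v) = 2\pi$, the power map has exponent $2\pi/\mathcal A(v) = 1$, so $w = e^{i\theta_k} z$ is merely a rotation; thus $z = e^{-i\theta_k} w$, giving $|dz|^2 = |dw|^2$ and therefore $\rho = |dw|^2$ in the vertex chart as well. Consequently $\rho$ coincides with the Euclidean metric $|dw|^2$ on a punctured neighborhood of $w = 0$, and this extends by the same formula smoothly across the vertex. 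This computation also isolates the main obstacle: for a general angle-sum the transition yields $\rho = (\mathcal A(v)/2\pi)^2\, |w|^{2(\mathcal A(v)/2\pi - 1)}\, |dw|^2$, whose conformal factor is singular at $w = 0$ unless $\mathcal A(v) = 2\pi$, reflecting a genuine cone singularity at $v$; the hypothesis is precisely what removes it.

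Assembling these pieces, $\rho$ is a well-defined conformal metric on $E$ that equals the Euclidean metric $|dw|^2$ in each chart of the atlas, hence restricts to $|dz|^2$ on every $\operatorname{Int}(P_\alpha)$ and, being locally isometric to a piece of the Euclidean plane, has identically vanishing Gaussian curvature. Therefore $\rho$ is a flat conformal metric on $E$, as claimed.
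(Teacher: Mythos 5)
Your proposal is correct and follows essentially the same route as the paper: pull back $|dz|^2$ through the atlas $\{h_\alpha, h_{\alpha\beta}, h_v\}$ and observe that all transition maps — in particular the vertex transition $z \mapsto e^{i\theta_k} z^{2\pi/\mathcal{A}(v)}$, which is an isometry exactly when $\mathcal{A}(v) = 2\pi$ — are Euclidean isometries, so the local pullbacks patch into a single flat conformal metric. Your explicit computation of the singular conformal factor for general angle-sum is a nice addition that the paper only gestures at, but the argument is the same.
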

\begin{proof}[Proof of Lemma \ref{thm:Lemma_flat_conformal_metric}]
    The complex structure of $E$ is described in Subsection \ref{subsec:complex_structure} using charts $h_\alpha, h_{\alpha,\beta},h_v$ whose image is in the complex plane $\mathbb C$. Equip $\mathbb C$ with the standard Euclidean metric $|dz|^2$. Now, using the definitions of the charts $\{h_\alpha, h_{\alpha,\beta},h_v\}$, observe that the transition maps between these charts are isometries of $\mathbb C$. We point out that the hypothesis $\mathcal{A}(v)=2\pi$ is used here to show the transition map $h_v\circ h^{-1}_{\alpha}$ is an isometry; in fact, $h_v\circ h^{-1}_{\alpha}(z)=a+e^{i\theta_\alpha}z^{2\pi/\mathcal{A}(v)}$ is an isometry if and only if $\mathcal{A}(v)=2\pi$.

    Now, we can define a flat metric $\rho$ on $E$ by pulling back the metric $|dz|^2$ of $\mathbb C$ using the collection of charts $\{h_\alpha, h_{\alpha,\beta},h_v\}$. This is well-defined as the transition maps are isometries. In conclusion, $\rho$ is a smooth conformal metric that agrees with the Euclidean metric $|dz|^2$ on each $\operatorname{Int}(P_\alpha)$.
\end{proof}
In the proof of Theorem \hyperref[thm:Theorem_regular_polygonal_type]{1(c)}, we shall need a conformal metric on regular polygonal surfaces whose angle-sums are not zero. Towards this, we have the following Lemma.
\begin{Lemma}\label{thm:Lemma_conformal_metric}
    Suppose $E$ is an oriented regular polygonal surface with angle-sum at least $2\pi$ at each vertex. Then, there is a conformal metric $\rho$ on $E$ with certain singularities. More precisely, in any holomorphic chart, the metric tensor $\rho$ can be written as $\varrho(z)^2|dz|^2$, where $\varrho(z)$ is a continuous function that is smooth and positive away from the vertices of $E$. Further, the metric $\rho$ restricts to the Euclidean metric $|dz|^2$ on the interior of each polygon $\operatorname{Int}(P_\alpha)$ of $E$
\end{Lemma}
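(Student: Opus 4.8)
The plan is to mirror the construction in the proof of Lemma \ref{thm:Lemma_flat_conformal_metric}, since away from the vertices nothing changes: the polygon charts $h_\alpha$ and the edge charts $h_{\alpha\beta}$ have transition maps that are isometries of $\mathbb C$, so pulling back the Euclidean metric $|dz|^2$ through these charts already defines a smooth flat conformal metric on $E \setminus \{\text{vertices}\}$ that restricts to $|dz|^2$ on each $\operatorname{Int}(P_\alpha)$. All of the new content is concentrated at the vertices, and this is exactly where the hypothesis $\mathcal{A}(v)\geq 2\pi$ must be invoked. Concretely, I would check that this Euclidean-cone metric on $E \setminus \{\text{vertices}\}$ extends to a \emph{continuous} conformal metric across each vertex precisely when the cone angle there is at least $2\pi$.

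The key local computation is as follows. Fix a vertex $v$ and write $\beta := \mathcal{A}(v)/2\pi$, so that $\beta \geq 1$. In the polygon chart on $N_v \cap \operatorname{Int}(P_k)$ the vertex sits at $0$, and the holomorphic vertex chart of Subsection \ref{subsec:complex_structure} is $w = h_{v,k}(z) = e^{i\theta_k}\, z^{2\pi/\mathcal{A}(v)} = e^{i\theta_k}\, z^{1/\beta}$. Inverting gives $z = c\,w^{\beta}$ for a unimodular constant $c$, whence
\[
|dz|^2 = \beta^2\, |w|^{2(\beta - 1)}\, |dw|^2 .
\]
Thus, expressed in the holomorphic coordinate $w$, the already-defined Euclidean metric is $\varrho(w)^2\,|dw|^2$ with $\varrho(w) = \beta\,|w|^{\beta - 1}$. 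Since $\beta - 1 \geq 0$, the factor $\varrho$ extends continuously to $w = 0$ — equalling the constant $\beta$ when $\beta = 1$ and vanishing when $\beta > 1$ — while remaining smooth and strictly positive for $w \neq 0$. I would then observe that the metric described by the $w$-chart and the metric coming from the polygon and edge charts agree on their overlap (both are the pullback of $|dz|^2$), so they glue to a single conformal metric $\rho$ on all of $E$ with exactly the stated regularity, still restricting to $|dz|^2$ on each $\operatorname{Int}(P_\alpha)$.

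The step that carries the real weight is verifying continuity of $\varrho$ at the vertices, and this is precisely where the hypothesis is indispensable: the exponent $\beta - 1$ is nonnegative if and only if $\mathcal{A}(v) \geq 2\pi$. Had some vertex satisfied $\mathcal{A}(v) < 2\pi$ (an angle deficit, i.e. positive combinatorial curvature), then $\beta < 1$ and $\varrho(w) = \beta\,|w|^{\beta - 1}$ would blow up as $w \to 0$, so no continuous conformal factor of this form could exist; this dichotomy is a first hint of why parts (a) and (c) of Theorem \ref{thm:Theorem_regular_polygonal_type} demand genuinely different arguments. The remaining bookkeeping — confirming that the vertex neighborhoods $N_v$, the edge neighborhoods $\operatorname{Int}(P_\alpha \cup e \cup P_\beta)$, and the polygon interiors form an open cover of $E$, and that the locally defined conformal factors patch consistently — is routine given the compatibility of the charts established in Subsection \ref{subsec:complex_structure}.
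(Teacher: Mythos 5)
Your proposal is correct and follows essentially the same route as the paper: build the flat metric on $E\setminus V$ from the polygon and edge charts, then express it in the holomorphic vertex coordinate and observe that the conformal factor $|w|^{\beta-1}$ extends continuously to the vertex exactly because $\beta=\mathcal{A}(v)/2\pi\geq 1$. Your explicit factor $\varrho(w)=\beta\,|w|^{\beta-1}$ even carries the constant $\beta$ coming from differentiating $w\mapsto w^{\beta}$, which the paper's displayed computation of $(h_v\circ h_{\alpha\beta}^{-1})'$ silently drops; this is a harmless normalization, but your version is the more careful one.
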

\begin{proof}[Proof of Lemma \ref{thm:Lemma_conformal_metric}]
    Proceed as in Lemma \ref{thm:Lemma_flat_conformal_metric} and consider the pullback of $|dz|^2$ from $\mathbb{C}$ to $E \setminus V$ using the collection of charts $\{h_\alpha, h_{\alpha,\beta}\}$; here, $V$ stands for the set of vertices in $E$ and the vertex charts $\{h_v\}$ are momentarily excluded. This defines a smooth conformal metric $\rho$ on $E\setminus V$ that agrees with the Euclidean metric $|dz|^2$ on each $\operatorname{Int}(P_\alpha)$. However, the pullback metric $h^*_v(|dz|^2)$, using the vertex chart $h_v \colon N_v \rightarrow \mathbb C$, does not agree with the metric $\rho$ on the overlapping regions $N_v\setminus \{v\}$. 
    
    To remedy this, equip $\mathbb C$ with the metric $$\varrho(z)^2|dz|^2:=|z|^{(\mathcal{A}(v)-2\pi)/\pi}|dz|^2.$$
    Observe that $\varrho(z)$ is continuous as $\mathcal{A}(v)\geq 2\pi$ at every vertex; further, it is smooth and positive away from $0$. Now, consider the pullback metric $h^*_v(\varrho(z)^2|dz|^2)$ on the neighborhood $N_v$ of $v$. It turns out that $h^*_v(\varrho(z)^2|dz|^2)$ agrees with $\rho$ on overlapping regions. To see this, recall the transition function $h_v \circ {h_{\alpha\beta}^{-1}} \colon h_{\alpha\beta}(\operatorname{Int}(P_\alpha) \cap N_v) \rightarrow h_v(\operatorname{Int}(P_\alpha) \cap N_v)$ is given by $z\mapsto e^{i\theta_k}z^{2\pi/\mathcal{A}(v)}$ and note
    \begin{align*}
        (h_v \circ {h_{\alpha\beta}^{-1}})^*(\varrho(z)^2|dz|^2) &= \varrho(h_v \circ {h_{\alpha\beta}^{-1}}(z))^2\cdot |(h_v \circ {h_{\alpha\beta}^{-1}})'(z)|^2\cdot |dz|^2\\
        &= |z^{2\pi/\mathcal{A}(v)}|^{(\mathcal{A}(v)-2\pi)/\pi}\cdot |z^{(2\pi-\mathcal{A}(v))/\mathcal{A}(v)}|^2\cdot |dz|^2\\
        &= |dz|^2
    \end{align*}
    In conclusion, the metric $\rho$ along with $h^*_v(\varrho(z)^2|dz|^2)$ define a continuous conformal metric on $E$ that is smooth and positive away from the vertices.
\end{proof}

\subsection{Universal cover of a polygonal surface}
\begin{Lemma}\label{thm:Lemma_lift_polygonal_surface}
    Suppose $E$ is a polygonal surface. Then, there exists a polygonal surface $\widetilde E$ which is a universal cover of $E$ along with a universal covering map $\pi \colon \widetilde E \rightarrow E$. Furthermore, the map $\pi$ satisfies the following properties.
    \begin{enumerate}
        \item[(a)] The vertices, edges, and polygons of $\widetilde E$ maps under $\pi$ to vertices, edges, and polygons of $E$, respectively.
        \item[(b)] The image of an $n$-sided polygon of $\widetilde E$, under the map $\pi$, is an $n$-sided polygon of $E$.
        \item [(c)] The vertex-type of a vertex $\tilde v$ in $\widetilde E$ is equal to the vertex-type of the vertex $\pi(\tilde v)$ in $E$. In particular, the combinatorial curvature $\kappa(\tilde v)$ is equal to the combinatorial curvature $\kappa(\pi(\tilde v))$.
    \end{enumerate}
\end{Lemma}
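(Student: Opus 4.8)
The plan is to build $\widetilde E$ by first taking the universal cover of $E$ as a topological space and then transporting the polygonal cell structure upward along the covering map. Assuming $E$ is connected (otherwise treat each component separately), it is a connected topological surface without boundary, hence path-connected, locally path-connected, and semilocally simply connected. Therefore the standard theory of covering spaces produces a universal covering map $\pi \colon \widetilde E \to E$ with $\widetilde E$ a simply connected topological surface. What remains is to promote $\widetilde E$ to a polygonal surface so that $\pi$ becomes cellular, and then to read off properties (a)--(c).

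First I would record the cell decomposition of $E$ whose $2$-cells are the interiors $\operatorname{Int}(P_\alpha)$, with characteristic maps $\Phi_\alpha \colon P_\alpha \to E$ given by the compositions $P_\alpha \hookrightarrow \bigsqcup_\beta P_\beta \to E$, and whose lower cells are the edge-interiors and vertices. Since each $P_\alpha$ is simply connected and locally path-connected, the lifting criterion for covering spaces lifts each $\Phi_\alpha$: for every point $\tilde x$ in the fiber over a chosen $\Phi_\alpha(x_0)$ there is a unique lift $\widetilde\Phi_\alpha \colon P_\alpha \to \widetilde E$ with $\pi \circ \widetilde\Phi_\alpha = \Phi_\alpha$ and $\widetilde\Phi_\alpha(x_0) = \tilde x$. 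Because $\pi$ is a local homeomorphism and $\Phi_\alpha$ is injective on $\operatorname{Int}(P_\alpha)$, each lift is a homeomorphism from $\operatorname{Int}(P_\alpha)$ onto an open subset of $\widetilde E$. Declaring these lifts (one copy of $P_\alpha$ for each lift) to be the polygons of $\widetilde E$, and their edges and vertices to be its lower cells, exhibits $\widetilde E$ as a quotient $\bigsqcup_{(\alpha,j)} P_{\alpha,j}/\!\sim$ of disjoint Euclidean polygons; this is the polygonal incarnation of the standard fact that a covering space of a CW complex inherits a CW structure making the projection cellular.

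To see that this is an honest polygonal surface, I would check the defining edge-gluing axiom locally. Fix an interior point $p$ of an edge of $E$ adjacent to polygons $P_\alpha$ and $P_\beta$, and choose an evenly covered neighborhood of $p$ modeled on two half-disks glued along a segment; since $\pi$ carries each sheet homeomorphically, every lifted edge sits between exactly two lifted polygons and inherits the edge-isometry from below, so the gluing upstairs is again pairwise and length-preserving. Properties (a) and (b) are then immediate: $\pi$ maps each lifted cell onto the cell below it, and the lift $\widetilde\Phi_\alpha$ has the same domain $P_\alpha$ as $\Phi_\alpha$, so an $n$-gon lifts to an $n$-gon. For (c), fix a vertex $\tilde v$ with $v = \pi(\tilde v)$ and take a neighborhood $N_v$ of $v$ that is evenly covered and small enough to meet only the polygons incident to $v$; then $\pi$ restricts to a cellular homeomorphism from the sheet $N_{\tilde v} \ni \tilde v$ onto $N_v$, carrying the cyclic sequence of incident polygon-corners and edges around $\tilde v$, in order, to that around $v$. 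Hence the vertex-types agree, and since $\kappa$ depends only on the vertex-type through Equation \ref{eqn:combi_curv}, so do the combinatorial curvatures.

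The hard part will be the bookkeeping underlying the third paragraph: confirming that the lifted polygons, edges, and vertices genuinely \emph{partition} $\widetilde E$ with the pairwise edge-gluing intact, rather than merely covering $\widetilde E$. This rests on combining the local-homeomorphism property of $\pi$ with the local models of $E$ --- two polygons glued along an edge away from the vertices, and a cone of polygon-corners at each vertex. Once these local models are transported to the evenly covered sheets, properties (a)--(c) become purely local statements about a cellular local homeomorphism and follow at once.
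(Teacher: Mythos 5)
Your proposal is correct and follows essentially the same route as the paper: take the topological universal cover, lift each closed polygon (the paper does this by taking closures of the components of $q^{-1}(\operatorname{Int}(P_\alpha))$ and extending an isometric parametrization by continuity, which yields exactly your lifts of the characteristic maps), and then verify the pairwise edge-gluing and the vertex-types by examining preimages of small simply-connected neighborhoods. The only difference in execution is that the paper chooses these neighborhoods around an entire open edge and around the full star of a vertex rather than around single points, which is precisely what settles the ``bookkeeping'' you defer at the end --- it makes the pairwise identification of whole edges, not just of germs at a point, immediate.
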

\begin{proof}
    Given a polygonal surface $E$, let $\{P_\alpha\}_{\alpha \in I}$ be the collection of Euclidean polygons that make up $E$. As $E$ is a topological surface, there exists a universal cover $S$ along with a universal covering map $q \colon S \rightarrow E$. A \textit{tile} of $S$ is defined to be the closure of a connected component of $q^{-1}(\operatorname{Int}(P_\alpha))$ for some $\alpha \in I$. The collection of all tiles in $S$ is denoted by the collection $\{T_\beta\}_{\beta \in J}$. Next, given an index $\beta \in J$, suppose $P_{\alpha(\beta)}$ is the polygon such that $\operatorname{Int}(P_{\alpha(\beta)}) = q(\operatorname{Int}(T_\beta))$. Then, define $Q_\beta$ to be an isometric copy of the polygon $P_{\alpha(\beta)}$. This gives a collection of polygons $\{Q_\beta\}_{\beta \in J}$.

    Observe that there exists a map $f \colon \bigcup_{\beta \in J} Q_\beta \rightarrow S$ such that (i) $f(Q_\beta) = T_\beta$ and (ii) $q \circ f \colon \operatorname{Int}(Q_\beta) \rightarrow q(\operatorname{Int}(T_\beta))=\operatorname{Int}(P_{\alpha(\beta)})$ is an isometry between the interiors of the two Euclidean polygons $Q_\beta$ and $P_{\alpha(\beta)}$. To see this, first note that for each polygon $P_\alpha$, we have $\operatorname{Int}(P_\alpha) \subseteq E$ is simply connected and hence the covering map $q$ maps each component of $q^{-1}(\operatorname{Int}(P_\alpha))$ homeomorphically onto $\operatorname{Int}(P_\alpha)$, that is, $q \colon \operatorname{Int}(T_\beta) \rightarrow q(\operatorname{Int}(T_\beta))$ is a homeomorphism for each $\beta$. Now, given a polygon $Q_\beta$, recall that $\operatorname{Int}(Q_\beta)$ is isometric to $q(\operatorname{Int}(T_\beta)) = \operatorname{Int}(P_{\alpha(\beta)})$ via some isometry $g_\beta \colon \operatorname{Int}(Q_\beta) \rightarrow q(\operatorname{Int}(T_\beta))$. Next, define $f$ on the domain $\operatorname{Int}(Q_\beta)$ to be $f|_{\operatorname{Int}(Q_\beta)}:=q^{-1}\circ g_\beta$ and then extend $f$ by continuity to the closed polygon $Q_\beta$. This map $f$ satisfies conditions (i) and (ii) above.

    Lastly, we claim $( \bigcup_{\beta \in J} Q_\beta )/f$ is a polygonal surface $\widetilde E$ and $q\circ f \colon \widetilde{E} \rightarrow E$ is the required universal covering map. The proof of the claim has four parts: (i) $f$ is injective on the interiors $\{\operatorname{Int}(Q_\beta)\}_{\beta \in J}$, (ii) $f$ identifies an edge $e'_1$ of $Q_\beta$ with exactly one other edge $e'_2$ of some polygon $Q_\beta'$, (iii) the quotient space $( \bigcup_{\beta \in J} Q_\beta )/f$ is a polygonal surface $\widetilde E$ and is homeomorphic to $S$, and (iv) the universal covering map $\pi:= q \circ f \colon \widetilde{E} \rightarrow E$ has properties (a) to (d) stated in Lemma \ref{thm:Lemma_lift_polygonal_surface}. Part (i) follows because $f(\operatorname{Int}(Q_\beta)) = \operatorname{Int}(T_\beta)$ and $\{\operatorname{Int}(T_\beta)\}_{\beta \in J}$ are distinct components of $q^{-1}(\cup_\alpha(\operatorname{Int}(P_\alpha)))$.
    \begin{figure}[t!]
        \centering
        {
            \hfill
            \includegraphics[width=0.3\textwidth]{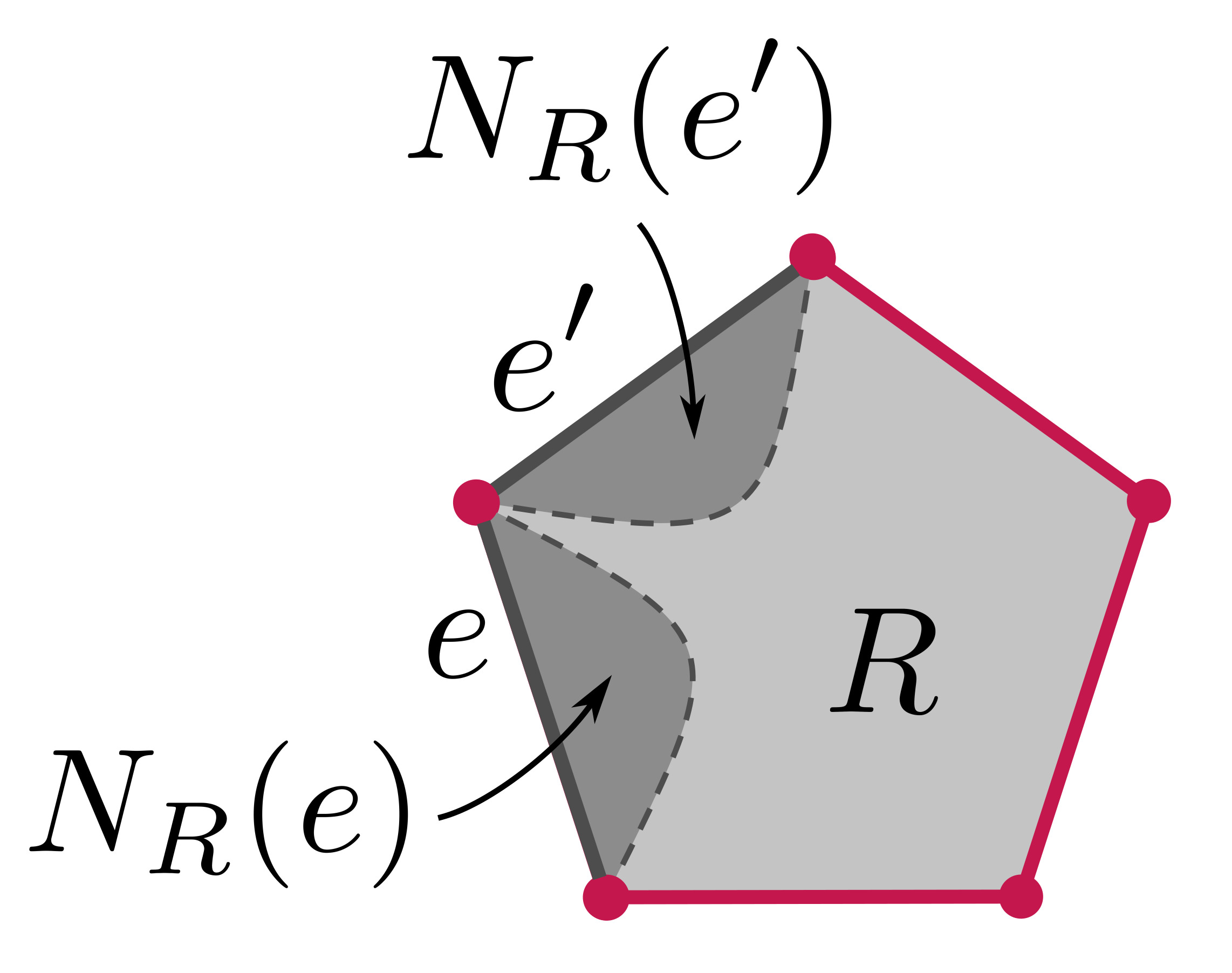} \hfill
            \includegraphics[width=0.38\textwidth]{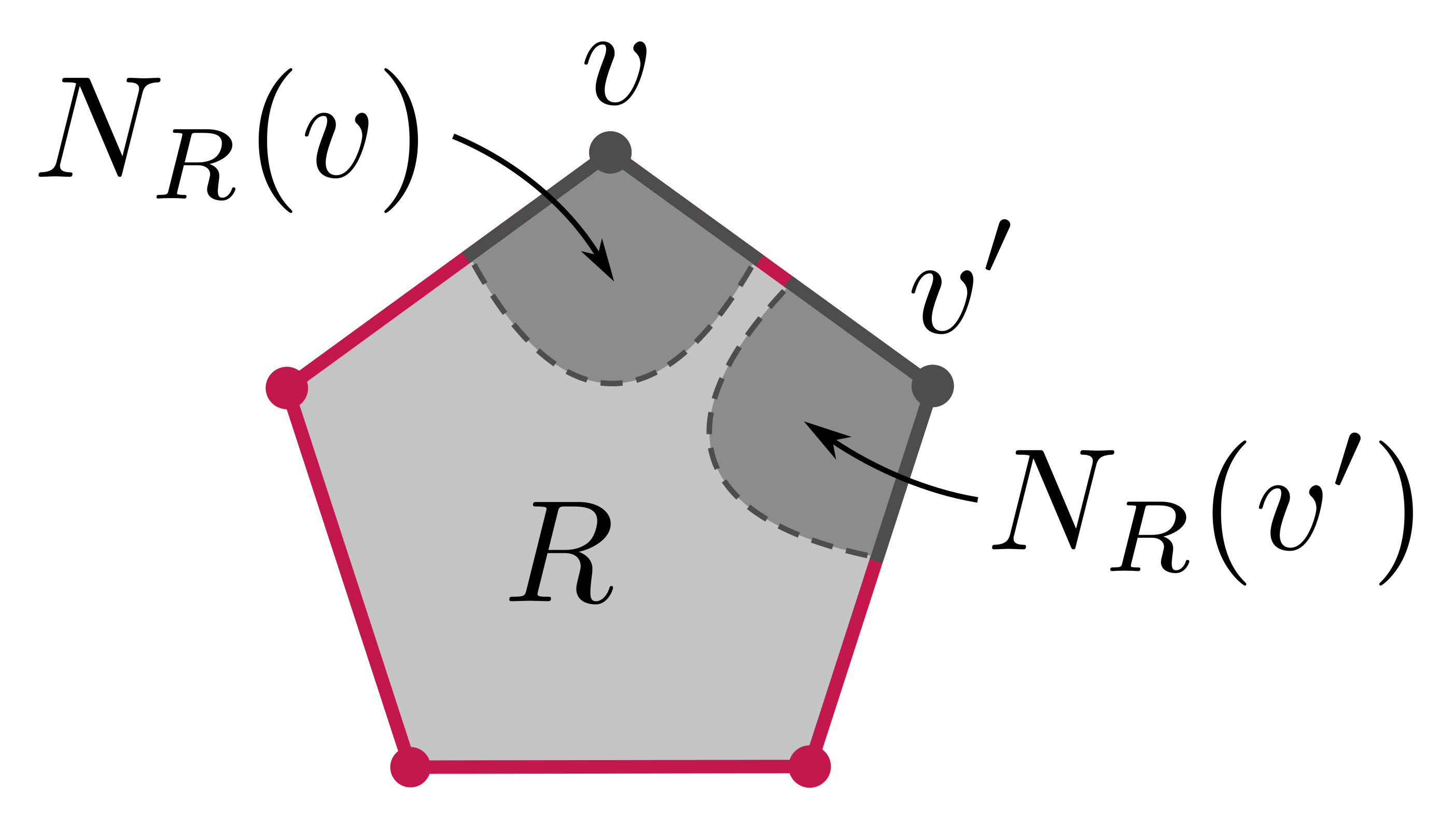}
            \hfill 
        }
        \caption{Left: Two disjoint open sets $N_R(e)$ and $N_R(e')$ of a polygon $R$ that contain $\operatorname{Int}(e)$ and $\operatorname{Int}(e')$, respectively. Right: Two disjoint open sets $N_R(v)$ and $N_R(v')$ that contain $v$ and $v'$, respectively.}
        \label{fig:special_neighborhoods}
    \end{figure}

    Before proving (ii), first, we introduce a notation. Given a Euclidean polygon $R$ and an edge $e$ of $R$, let $N_R(e)$ denote an open subset of $R$ that contains the edge $e$ but does not contain the end points of $e$. Furthermore, the open neighborhoods $N_R(e)$ are chosen such that $N_R(e) \cap N_R(e') = \emptyset$ for all pairs of edges $e \ne e'$ (see Figure \ref{fig:special_neighborhoods}). Now, fix an edge $e$ of the polygonal surface $E$. By definition, the edge $e$ is obtained by gluing exactly two edges $e_1$ and $e_2$ of polygons $P_{\alpha_1}, P_{\alpha_2}$, respectively. Let $N_{P_{\alpha_1}}(e_1)$ and $N_{P_{\alpha_2}}(e_2)$ be the open neighborhoods of $e_1$ and $e_2$. Note that $N_{P_{\alpha_1}}(e_1) \cup N_{P_{\alpha_2}}(e_2)$ is a simply-connected neighborhood of $E$ containing $\operatorname{Int}(e)$. Choose a connected component $U$ of $q^{-1}(N_{P_{\alpha_1}}(e_1) \cup N_{P_{\alpha_2}}(e_2))$. The open set $U$ is contained in two tiles of $S$, which we call $T_{\beta_1}$ and $T_{\beta_2}$ (it is possible that $T_{\beta_1}= T_{\beta_2}$). Then, note that $f^{-1}(U)$ is exactly the union $N_{Q_{\beta_1}}(e'_1) \cup  N_{Q_{\beta_2}}(e'_2)$ for two distinct edges $e'_1$ and $e'_2$. In conclusion, we get that an edge $e_1'$ of $Q_{\beta_1}$ is identified with exactly one other edge $e_2'$ of some polygon $Q_{\beta_2}$.
    
    For (iii), a similar analysis is required. Notation: given a Euclidean polygon $R$ and a vertex $v$ of $R$, let $N_R(v)$ denote an open subset of $R$ that contains the vertex $v$ such that $N_R(v) \cap N_R(v') = \emptyset$ for all pairs of vertices $v \ne v'$. Now, fix a vertex $v$ of the polygonal surface $E$. Let $P_{\alpha_1}, P_{\alpha_2}, \dots, P_{\alpha_d}$ be the cyclic arrangement of polygons around $v$ in $E$. Then, the vertex $v$ is obtained by gluing vertices $v_1,v_2, \dots, v_d$ of polygons $P_{\alpha_1}, P_{\alpha_2}, \dots, P_{\alpha_d}$. Let $N_{P_{\alpha_1}}(v_1), N_{P_{\alpha_2}}(v_2), \dots, N_{P_{\alpha_d}}(v_d)$ denote open neighborhoods of $v_1,v_2, \dots, v_d$. Note that $N_{P_{\alpha_1}}(v_1) \cup N_{P_{\alpha_2}}(v_2) \cup \cdots \cup N_{P_{\alpha_d}}(v_d)$ is a simply-connected neighborhood of $v$ in $E$. Choose a connected component $U$ of $q^{-1}(N_{P_{\alpha_1}}(v_1) \cup N_{P_{\alpha_2}}(v_2) \cup \cdots \cup N_{P_{\alpha_d}}(v_d))$. The open set $U$ is contained in $d$ tiles of $S$, which we call $T_{\beta_1}, T_{\beta_2}, \dots , T_{\beta_d}$ (It is possible that $T_{\beta_i}= T_{\beta_j}$). Then, note that $f^{-1}(U)$ is exactly the union $N_{Q_{\beta_1}}(v'_1) \cup  N_{Q_{\beta_2}}(v'_2) \cup  \cdots \cup N_{Q_{\beta_d}}(v'_d)$ for $d$ distinct vertices $v'_1, v'_2, \dots , v'_d$. In conclusion, two vertices $v'_i$ and $v'_j$ of $\{Q_\beta\}_{\beta \in J}$ are glued together if and only if there exists a sequence of polygons $Q_i, Q_{i+1}, \dots, Q_{j}$ that are glued along edges in a cyclic fashion so as to induce the gluing of vertices $v'_i$ and $v'_j$. In particular, to obtain the quotient surface $( \bigcup_{\beta \in J} Q_\beta )/f$, it suffices to just glue the edges of $\{Q_\beta\}$ as prescribed by the map $f$. Thus, $( \bigcup_{\beta \in J} Q_\beta )/f$ is a polygonal surface $\widetilde E$ and the map $f \colon \bigcup_{\beta \in J} Q_\beta \rightarrow S$ factors to give a bijective map $f \colon \widetilde E \rightarrow S$. This map is a homeomorphism because it is a bijective open map. Therefore, $\pi := q \circ f \colon \widetilde E \rightarrow E$ is a universal covering map.

    For (iv), given a polygon $Q_\beta$ of $\widetilde E$, recall that $\pi(Q_\beta)=P_{\alpha(\beta)}$ and $Q_\beta$ is isometric to $P_{\alpha(\beta)}$. This shows that $n$-gons are mapped to $n$-gons under $\pi$. Similarly, using the discussion above we can conclude that the vertices and edges of $\widetilde E$ are mapped to vertices and edges of $E$, respectively. Lastly, given a vertex $\tilde v \in \pi^{-1}(v)$ of $\widetilde E$, by the discussion above, we have a homeomorphism of simply connected neighborhoods $\pi \colon N_{Q_{\beta_1}}(v'_1) \cup  N_{Q_{\beta_2}}(v'_2) \cup  \cdots \cup N_{Q_{\beta_d}}(v'_d) \rightarrow N_{P_{\alpha(\beta_1)}}(v_1)\cup N_{P_{\alpha(\beta_2)}}(v_2)\cup \dots\cup N_{P_{\alpha(\beta_d)}}(v_d)$, where $\{Q_{\beta_i}\}_{1\leq i \leq d}$ and $\{P_{\alpha(\beta_i)}\}_{1\leq i \leq d}$ is the cyclic arrangement of polygons around the vertex $\tilde v$ and $v$, respectively. This shows that the vertex-type of the vertices $\tilde v$ and $\pi(\tilde v) = v$ are equal.
\end{proof}

\section{Proof of Theorem 1(a) ($\kappa(v)>0$ case)}\label{sec:positive_case}
\begin{Theorem}[Bonnet-Myers for regular polygonal surfaces]\label{thm:Theorem_polygonal_Bonnet_Myers}
    Let $E$ be a regular polygonal surface. Suppose the following conditions hold.
    \begin{enumerate}
        \item[(a)] There is a constant $N$ such that each polygon in $E$ has at most $N$ sides.
        \item[(b)] The combinatorial curvature $\kappa(v)$ at each vertex of $E$ is strictly positive.
    \end{enumerate}
    Then $E$ is compact.
\end{Theorem}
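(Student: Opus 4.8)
The plan is to first reduce the hypotheses to a clean quantitative form and then run a Gauss--Bonnet (total curvature) argument.

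\emph{Reduction.} First I would observe that strict positivity of $\kappa(v)$ already bounds the degree. Since every polygon has at least three sides, each summand in $\mathcal A(v)=\sum_{i=1}^d(\pi-2\pi/k_i)$ is at least $\pi-2\pi/3=\pi/3$, so $\kappa(v)>0$ forces $d\cdot\pi/3\le\mathcal A(v)<2\pi$, that is, $\operatorname{deg}(v)=d\le 5$. Combined with the upper bound $k_i\le N$, there are only finitely many possible vertex-types $[k_1,\dots,k_d]$ with $1\le d\le 5$ and $3\le k_i\le N$. Hence the finitely many positive values that $\kappa$ can take have a strictly positive minimum, which gives a uniform bound $\kappa(v)\ge\epsilon>0$ at every vertex. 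I also record that it suffices to prove that $E$ has finitely many vertices: the degree bound then forces at most $\tfrac52$ as many edges, each polygon has at least three sides and meets at most two polygons along each edge, so there are finitely many polygons as well, and a finite isometric gluing of compact Euclidean polygons is compact.

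\emph{Main argument via total curvature.} I would equip $E$ with its flat cone metric, obtained by gluing the unit regular Euclidean polygons along edges by isometries; by the local finiteness coming from $\operatorname{deg}(v)\le 5$ this is a complete, locally compact, piecewise-flat length surface whose only singularities are the cone points at the vertices. Since $\mathcal A(v)<2\pi$ at every vertex, each cone point carries a positive concentrated curvature equal to its angle deficit $2\pi-\mathcal A(v)=2\pi\,\kappa(v)$, and the metric has nonnegative curvature in the sense of Alexandrov. If $E$ had infinitely many vertices, its total curvature would satisfy
\[\sum_{v}\bigl(2\pi-\mathcal A(v)\bigr)=2\pi\sum_{v}\kappa(v)\ \ge\ 2\pi\,\epsilon\cdot\#\{\text{vertices of }E\}=+\infty.\]
On the other hand, the Cohn--Vossen inequality (the Gauss--Bonnet inequality for complete open surfaces) bounds the total curvature of such a non-compact surface by $2\pi\chi(E)\le 2\pi$. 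This contradiction shows that $E$ has finitely many vertices, hence is compact.

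\emph{Where the work is.} The reduction step is routine; the heart of the matter is the Cohn--Vossen / Gauss--Bonnet inequality for the open surface, and I expect this to be the main obstacle. Concretely, if one tries to prove it by summing the combinatorial Gauss--Bonnet identity $\sum_{v\in\operatorname{int}}(2\pi-\mathcal A(v))+\sum_{v\in\partial}(\pi-\mathcal A(v))=2\pi$ over an exhaustion of $E$ by combinatorial discs, the difficulty is controlling the boundary-turning terms $\sum_{v\in\partial}(\pi-\mathcal A(v))$: one must show that the cumulative positive interior curvature is not absorbed by the boundary sum as the discs grow, and this is exactly the place where positivity must be used globally rather than vertex-by-vertex. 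Following Stone, an alternative is to bound the diameter directly: a minimizing edge-path cannot pass through a positively curved cone point (there is not enough angle on both sides), and a combinatorial second-variation (geodesic-focusing) argument shows that the angle deficits accumulated along a sufficiently long minimizing geodesic produce a conjugate point, contradicting minimality; Hopf--Rinow then upgrades the resulting diameter bound to compactness. I would expect this focusing estimate, or equivalently the boundary-control estimate above, to be the crux of the proof.
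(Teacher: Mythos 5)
Your reduction is sound and in fact mirrors an observation the paper also makes: strict positivity plus $k_i\le N$ leaves only finitely many vertex-types (with $\operatorname{deg}(v)\le 5$), hence a uniform bound $\kappa(v)\ge\epsilon>0$, and compactness reduces to finiteness of the vertex set. But your main argument routes the entire difficulty through the Cohn--Vossen inequality for a complete, open, piecewise-flat cone surface (equivalently, the combinatorial Gauss--Bonnet inequality $\sum_v\kappa(v)\le\chi(E)$ for infinite graphs), and this step is invoked rather than proved. As you yourself note, that is exactly where the content lies: the classical Cohn--Vossen theorem is stated for smooth complete Riemannian surfaces with integrable curvature, and extending it to cone metrics requires either the Alexandrov--Huber theory of surfaces of bounded integral curvature or a direct combinatorial argument controlling the boundary-turning terms along an exhaustion --- the latter is essentially the DeVos--Mohar theorem cited in the paper's introduction, which is itself a substantial result proved thirty years after Stone. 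So the proposal is not wrong, but it is a reduction of the theorem to an unproved statement of comparable depth.

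The paper's proof is designed precisely to avoid this. Instead of working with the flat cone metric and a singular Gauss--Bonnet argument, it replaces each unit regular Euclidean $k$-gon by a unit regular $r$-spherical $k$-gon on the sphere $S^2_r$; for $r$ large (using the finiteness of vertex-types, i.e.\ your reduction step) the resulting cone angles remain strictly below $2\pi$, so minimizing piecewise geodesics avoid the vertices and $E_r\setminus V$ is a genuine convex smooth surface of constant curvature $+1/r^2$. The classical Bonnet--Myers theorem then gives $\operatorname{diam}(E_r)\le\pi r$, and a Hopf--Rinow argument upgrades the diameter bound to compactness. This is close in spirit to the Stone-style ``focusing'' alternative you sketch at the end, except that the focusing estimate is outsourced to the smooth Bonnet--Myers theorem rather than carried out combinatorially. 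If you want to complete your version, you must either supply a proof of the cone-surface Cohn--Vossen inequality (or cite Huber/Alexandrov--Zalgaller or DeVos--Mohar explicitly and verify their hypotheses), or carry out the geodesic-focusing argument in the singular flat metric; the spherical smoothing trick is the paper's way of making the second option painless.
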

\begin{Remark}
    In Riemannian geometry, the Bonnet-Myers theorem also gives a bound on the diameter in terms of the curvature. Analogously, given a polygonal surface $E$ satisfying the hypothesis of Theorem \ref{thm:Theorem_polygonal_Bonnet_Myers} and a number $c_0 > 0$ such that $\kappa(v)\geq c_0$ for all vertices $v$, then $E$ has at most $2/c_0$ vertices in it. This follows from the discrete analog of the Gauss-Bonnet theorem (see Lemma \ref{thm:Lemma_Gauss_Bonnet}). In addition to this bound, the recent work of Ghidelli \cite{ghidelli2023largest} shows that, other than four special infinite families (the prisms, the antiprisms and their projective analogs), any polygonal surface with strictly positive combinatorial curvature and a degree of at least $3$ at each vertex has at most 208 vertices. Furthermore, this bound is achieved by a polygonal surface with $208$ vertices. (See also \cite{devos2007analogue,nicholson2011new,oldridge2017characterizing,oh2017number}.)
\end{Remark}
Theorem \hyperref[thm:Theorem_regular_polygonal_type]{1(a)} follows as a corollary of Theorem \ref{thm:Theorem_polygonal_Bonnet_Myers} stated above. To see this, let $E$ be an orientable regular polygonal surface such that the hypothesis of Theorem \hyperref[thm:Theorem_regular_polygonal_type]{1(a)} is satisfied, that is, conditions (a) and (b) of Theorem \ref{thm:Theorem_polygonal_Bonnet_Myers} above are satisfied. Using Lemma \ref{thm:Lemma_lift_polygonal_surface}, we obtain a simply-connected regular polygonal surface $\widetilde E$ which is a universal cover of $E$ along with a universal covering map $\pi \colon \widetilde E \rightarrow E$. Additionally, Lemma \ref{thm:Lemma_lift_polygonal_surface} tells us that (i) combinatorial curvature $\kappa(\tilde v)$ at a vertex $\tilde v$ of $\widetilde E$ is equal to the combinatorial curvature $\kappa(\pi(\tilde v))$ at the vertex $\pi(\tilde v)$ of $E$ and (ii) the number of sides in a polygon $\tilde P$ of $\widetilde E$ is equal to the number of sides in the polygon $\pi(\tilde P)$ of $E$. Hence, the universal cover $\widetilde E$ also satisfies hypothesis (a) and (b) of Theorem \ref{thm:Theorem_polygonal_Bonnet_Myers} above and we deduce that $\widetilde E$ is compact. As $\widetilde E$ is simply-connected, it follows that $\widetilde E = \hat{\mathbb{C}}$. Moreover, $E = \widetilde E = \hat{\mathbb{C}}$ because the Riemann sphere $\hat{\mathbb{C}}$ has no non-trivial quotients. This proves Theorem \hyperref[thm:Theorem_regular_polygonal_type]{1(a)} and it remains to prove Theorem \ref{thm:Theorem_polygonal_Bonnet_Myers}.

We begin by providing a two-point summary of the proof.
\begin{enumerate}
    \item A regular polygonal surface $E$ is flat away from the vertices and all the curvature of $E$ is accumulated at the vertices. We ``distribute" the positive curvature at the vertices into the complementary subset $E \setminus V$, where $V$ is the set of vertices of $E$. More specifically, we replace $E$ with a new surface $E_r$ that is homeomorphic to $E$ but has a different geometry: the open subset $E_r \setminus V$ is a smooth Riemannian $2$-manifold and has a constant curvature of $+1/r^2$. Furthermore, $r$ is chosen to be sufficiently large such that the $r$-spherical angle-sum at each vertex of $E_r$ is strictly lesser than $2\pi$. 
    \item We apply the Bonnet-Myers theorem of Riemannian geometry on $E_r \setminus V$ and conclude that the diameter of $E_r \setminus V$ is at most $\pi r$. The bound on the diameter forces $E_r$ and hence $E$ to be compact surfaces.
\end{enumerate}

\medskip
\noindent \textbf{Organization of the section.}
Subsection \ref{subsec:spherical_surface} defines a new family of metric surfaces (regular $r$-spherical surfaces). Next, in Subsection \ref{subsec:spherical_bonnet_myers}, a Bonnet-Myers result is established for the newly defined regular $r$-spherical surfaces. Finally, in Subsection \ref{subsec:culmination_bonnet_myers}, we establish Theorem \ref{thm:Theorem_polygonal_Bonnet_Myers} by relating regular polygonal surfaces with regular $r$-spherical surfaces.

\subsection{Regular $r$-spherical surfaces and their $r$-spherical angle-sums}\label{subsec:spherical_surface}
\begin{figure}[t!]
    \centering
    {
        \hfill
        \includegraphics[width=0.36\textwidth]{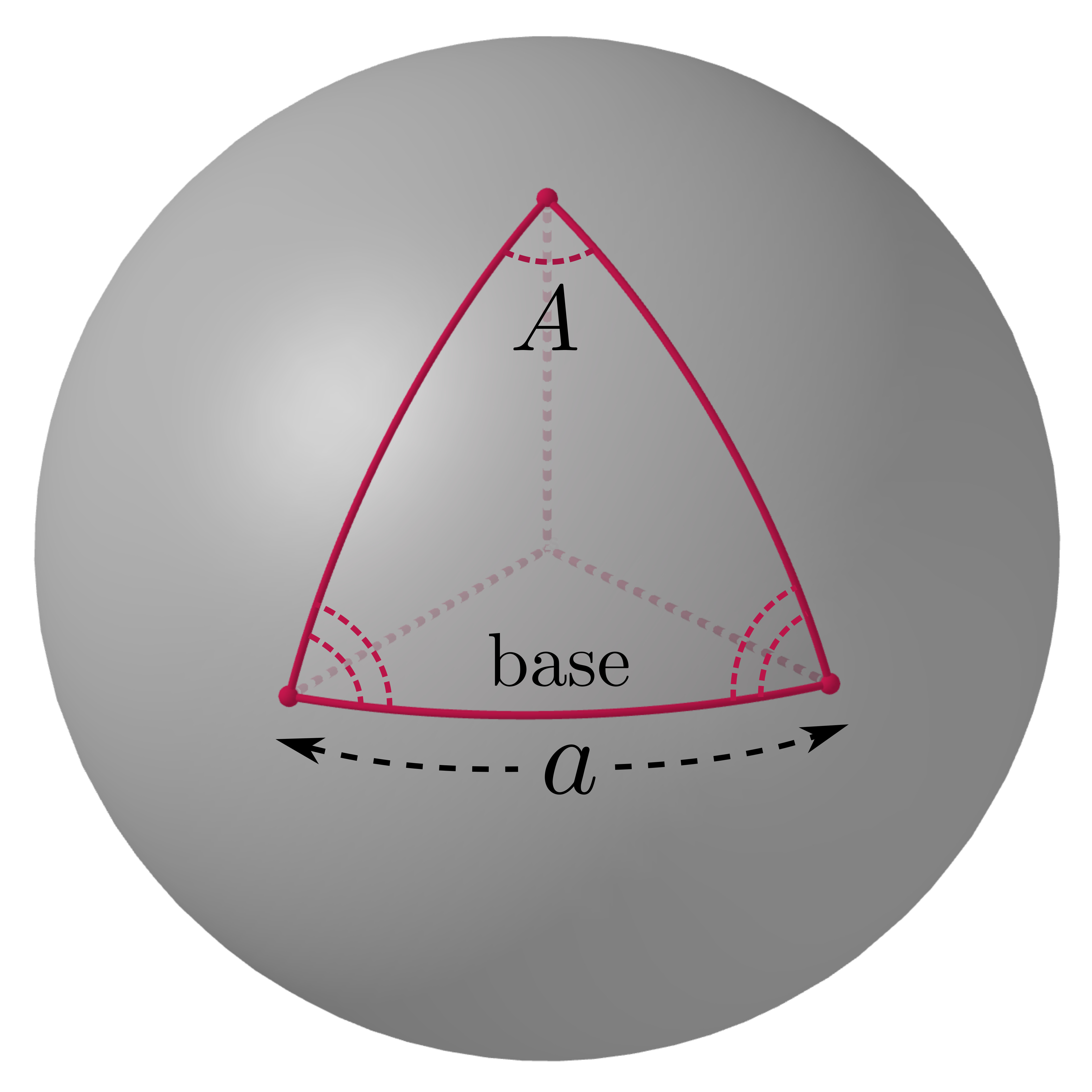} \hfill \hfill
        \includegraphics[width=0.36\textwidth]{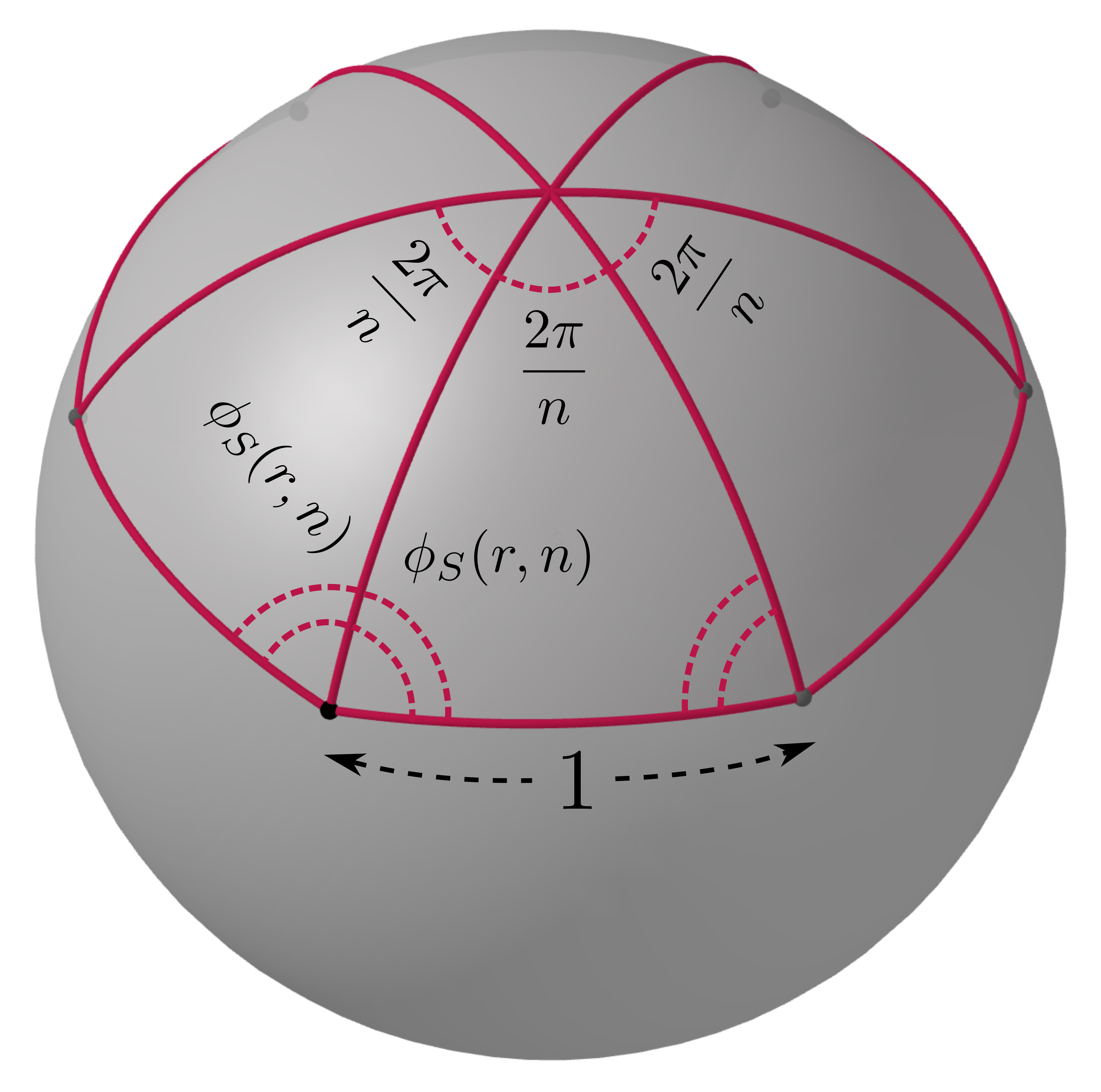}
        \hfill 
    }
    \caption{Left: A spherical isosceles triangle with base length $a$ and opposite angle $A$. Right: A unit regular spherical n-gon constructed as the union of $n$ spherical isosceles triangle with base length $1$ and opposite angle $2\pi/n$.}
    \label{fig:spherical_polygon}
\end{figure}
For each $r \in (0,\infty)$, let $S^2_r\subseteq \mathbb R^3$ be the 2-sphere of radius $r$ endowed with the induced Riemannian metric from $\mathbb R^3$. An $r$-spherical triangle is a closed subset of $S^2_r$ bounded by three geodesic segments. We would like to define a unit regular $r$-spherical $n$-gon using certain $r$-spherical isosceles triangles (see Figure \ref{fig:spherical_polygon}). Towards this, note that, given an angle $0<A<\pi$ and a side-length $0<a<Ar$, there exists a unique (up to isometry) $r$-spherical isosceles triangle such that (i) the base length $a$, (ii) the angle opposite to the base is $A$, and (iii) two base angles are acute angles, that is, they are less than $\pi/2$. Now, given $r>n/(2\pi)$, a \textit{unit regular $r$-spherical $n$-gon} is defined to be the union of $n$ many identical $r$-spherical isosceles triangles $\{T_i\}_{1\leq i \leq n}$ arranged cyclically around a common vertex; where each $T_i$ is a $r$-spherical isosceles triangle with base length $a=1$ and an angle $A=2\pi/n$ opposite to the base. Each $T_i$ has two equal acute angles at the two vertices of the base. This angle, call $\phi_S(r,n)$, can be computed using spherical trigonometry: it is the unique solution of the following equation in the range $(0,\pi)$.
\begin{equation}\label{eqn:spherical_angle-sum}
    \sin^2(\phi_S(r,n)) = \frac{1+\cos(2\pi /n)}{1+\cos(1/r)}
\end{equation}
We make two observations: (i) the interior angle of a unit regular $r$-spherical $n$-gon is $2\phi_S(r,n)$ (see Figure \ref{fig:spherical_polygon}), (ii) in the limit $r \rightarrow \infty$, the interior angle of a unit regular $r$-spherical $n$-gon tends to the interior angle of a unit regular Euclidean $n$-gon, that is,
\begin{equation*}
    \lim_{r \rightarrow \infty} 2\phi_S(r,n) = \pi - \frac{2\pi}{n}.
\end{equation*}
This can be seen using equation \ref{eqn:spherical_angle-sum}.

\begin{Definition}\label{definition:regular_spherical_surface}
    A \textit{regular $r$-spherical surface} $E_r$ is a surface obtained by gluing a collection of unit regular $r$-spherical polygons along their edges such that each edge is identified with exactly one other edge using a spherical isometry.
\end{Definition}

Given a vertex $v$ of $E_r$, suppose $P_{1}, P_{2}, \dots, P_{d}$ is the cyclic arrangement of $r$-spherical polygons around $v$. Then, the vertex-type of $v$ is $[k_1,k_2,\dots,k_d]$, where $k_i$ is the number of sides in the polygon $P_i$. Now, the \textit{$r$-spherical angle-sum} $\mathcal A_r(v)$ at the vertex $v$ is defined as follows:
\begin{equation*}
    \mathcal A_r(v) := \sum_{i=1}^{d} \left( \text{Interior angle of }P_i \right) = \sum_{i=1}^{d} 2\phi_S(r,k_i).
\end{equation*}
For future use, note that, in the limit $r\rightarrow \infty$, the $r$-spherical angle-sum $\mathcal A_r(v)$ tends to the (Euclidean) angle-sum $\mathcal{A}(v)$, that is, 
\begin{equation*}
    \lim_{r \rightarrow \infty} \mathcal A_r(v) =  \lim_{r \rightarrow \infty} \sum_{i=1}^{d} 2\phi_S(r,k_i) = \sum_{i=1}^{d} \left( \pi - \frac{2\pi}{k_i} \right)= \mathcal A(v).
\end{equation*}

\subsection{A Bonnet-Myers theorem for regular $r$-spherical surfaces} \label{subsec:spherical_bonnet_myers}
In this subsection, we prove an analog of Theorem \ref{thm:Theorem_polygonal_Bonnet_Myers} for regular $r$-spherical surfaces.
\begin{Theorem}[Bonnet-Myers for regular $r$-spherical surfaces]\label{thm:Theorem_spherical_Bonnet-Myers}
    Let $E_r$ be a regular $r$-spherical surface where the $r$-spherical angle-sum $\mathcal{A}_r(v)$ is strictly less than $2\pi$ at each vertex $v$. Then, the diameter of $E_r$ is lesser than or equal to $\pi r$ and $E_r$ is compact.
\end{Theorem}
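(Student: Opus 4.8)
The plan is to run the classical Bonnet–Myers argument on the smooth part $E_r \setminus V$, where $V$ denotes the set of vertices (cone points), and to exploit the fact that the cone points contribute only positive concentrated curvature. First I would record the local geometry: away from $V$ the surface $E_r$ is a smooth Riemannian $2$-manifold of constant Gaussian curvature $1/r^2$. This is clear in the interior of a polygon, and across the interior of an edge two unit regular $r$-spherical polygons are glued isometrically along a geodesic, so a neighborhood of an interior edge point develops isometrically into $S^2_r$ (the total angle there is $\pi+\pi=2\pi$, with no defect). Only the vertices $v\in V$ are genuine singularities, and at each the cone angle equals $\mathcal A_r(v)<2\pi$. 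Since each vertex meets only finitely many polygons and every edge has length $1$, any ball of radius $1/2$ meets only finitely many polygons and is therefore relatively compact; hence $E_r$ is a complete, locally compact length space, and by the Hopf–Rinow–Cohn-Vossen theorem it is a geodesic space in which any two points are joined by a minimizing geodesic and closed bounded sets are compact.

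The key structural observation is that a minimizing geodesic cannot pass through a cone point of angle $<2\pi$ in its interior. At such a point the space of directions is a circle of total length $\mathcal A_r(v)<2\pi$; a path through $v$ splits this circle into two arcs of angular lengths $\alpha,\beta$ with $\alpha+\beta=\mathcal A_r(v)<2\pi$, and the path is locally shortest only if $\min(\alpha,\beta)\ge\pi$, which is impossible. Consequently, for any two points $p,q\in E_r\setminus V$, a minimizing geodesic $\gamma$ between them lies entirely in the open smooth surface $E_r\setminus V$, where the metric is genuinely Riemannian with constant curvature $1/r^2$; being minimizing there, $\gamma$ is a smooth Riemannian geodesic.

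Now I would apply the second–variation (index form) argument along $\gamma$. Parametrize $\gamma\colon[0,L]\to E_r\setminus V$ by arc length, let $E(t)$ be a parallel unit normal field, and take the variation field $J(t)=\sin(\pi t/L)\,E(t)$, which vanishes at the endpoints. Using $|J'|^2=\tfrac{\pi^2}{L^2}\cos^2(\pi t/L)$, $|J|^2=\sin^2(\pi t/L)$, and $K\equiv 1/r^2$, the index form evaluates to
\[
I(J,J)=\int_0^L\bigl(|J'|^2-K\,|J|^2\bigr)\,dt=\frac{L}{2}\left(\frac{\pi^2}{L^2}-\frac{1}{r^2}\right),
\]
which is strictly negative as soon as $L>\pi r$. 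A negative index form contradicts minimality, so $L\le\pi r$; that is, $d(p,q)\le\pi r$ for all $p,q\in E_r\setminus V$. Since $E_r\setminus V$ is dense in $E_r$ and the distance function is continuous, $\operatorname{diam}(E_r)\le\pi r$. Finally, $E_r$ is a complete, locally compact geodesic space of finite diameter, so it is itself a closed bounded set and is therefore compact.

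The main obstacle I expect is the handling of the singular points rather than the Riemannian estimate: one must be sure that minimizing geodesics genuinely avoid the cone points, so that the smooth index form applies verbatim, and that restricting to endpoints in $E_r\setminus V$ loses nothing—this is precisely why I would establish the diameter bound on the dense smooth part and then pass to a density argument, rather than arguing about geodesics that terminate at vertices. A secondary technical point is the completeness of $E_r$ as a length space, which is needed both to invoke Hopf–Rinow and to deduce compactness from bounded diameter; here it rests on the unit edge length and the local finiteness of the tiling.
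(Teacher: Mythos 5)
Your proposal is correct and follows essentially the same route as the paper: establish that $(E_r,d)$ is a complete, locally compact length space via Hopf--Rinow, show that minimizing geodesics avoid the cone points because the angle-sum $\mathcal A_r(v)<2\pi$ forces one of the two complementary sectors at a vertex to have angle less than $\pi$, and then run the Bonnet--Myers second-variation argument on the smooth constant-curvature part $E_r\setminus V$ before passing to the closure by density. The only cosmetic difference is that you write out the index-form computation explicitly where the paper invokes a ``convex version'' of Bonnet--Myers as a black box.
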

The following proposition plays a key role in the proof of Theorem \ref{thm:Theorem_spherical_Bonnet-Myers} above.
\begin{Proposition}[$E_r \setminus V$ is convex]\label{thm:Proposition_length-minizing_misses_vertices}
    Suppose $E_r$ is a regular $r$-spherical surface where the $r$-spherical angle-sum $\mathcal{A}_r(v)$ is strictly less than $2\pi$ at each vertex $v$. If $\gamma$ is a minimizing piece-wise geodesic joining $x$ to $y$, then the image of $\gamma$ in $E_r$ does not intersect any vertex of $E_r$, except possibly the endpoints $x$ and $y$.
\end{Proposition}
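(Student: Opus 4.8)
The plan is to argue by contradiction using the classical ``cut the corner'' trick for cone points carrying positive concentrated curvature. Suppose a minimizing path $\gamma$ from $x$ to $y$ passes through a vertex $v\ne x,y$. A small neighbourhood of $v$ in $E_r$ is isometric to a \emph{spherical cone}: it is glued from the corner sectors of the incident unit regular $r$-spherical polygons, so it is locally isometric to $S^2_r$ away from $v$ and has total cone angle $\theta:=\mathcal A_r(v)$ at $v$, with $\theta<2\pi$ by hypothesis. The two sub-arcs of $\gamma$ adjacent to $v$ reach $v$ along two geodesic directions. If these directions coincide then $\gamma$ retraces itself near $v$, so it contains a nontrivial loop and is manifestly not minimizing; hence we may assume the directions are distinct. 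They split the circle of directions at $v$ (total angle $\theta$) into two sectors of angles $\alpha,\beta>0$ with $\alpha+\beta=\theta<2\pi$, and in particular at least one of them, say $\alpha$, satisfies $\alpha\le\theta/2<\pi$.

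First I would make the local model precise. Fix $\epsilon>0$ smaller than the convexity radius $\pi r/2$ of $S^2_r$ and small enough that the closed $\epsilon$-ball about $v$ lies in the cone neighbourhood; let $p,q$ be the points of the two arcs of $\gamma$ bounding the $\alpha$-sector at arclength $\epsilon$ from $v$. Since $\epsilon<\pi r$ is below the injectivity radius, the exponential map develops the $\epsilon$-ball of the cone isometrically and injectively into $S^2_r$, carrying $p,v,q$ to an isosceles $r$-spherical triangle with apex $v$, apex angle $\alpha$, and two legs of length $\epsilon$. The geodesic $\sigma$ from $p$ to $q$ in $S^2_r$ is the base of this triangle: it lies in the closed angular wedge of the sector, stays within distance $\epsilon$ of $v$, and---crucially, because $\alpha<\pi$---does not pass through $v$. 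Pulling $\sigma$ back gives an admissible path in $E_r$ joining $p$ to $q$ that misses $v$.

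Next I would run the spherical trigonometry. Writing $c:=\operatorname{len}(\sigma)$, the spherical law of cosines gives
\[
\cos(c/r)=\cos^{2}(\epsilon/r)+\sin^{2}(\epsilon/r)\cos\alpha=1-\sin^{2}(\epsilon/r)\bigl(1-\cos\alpha\bigr),
\]
which I compare with $\cos(2\epsilon/r)=1-2\sin^{2}(\epsilon/r)$. Since $\alpha<\pi$ means $\cos\alpha>-1$, i.e. $1-\cos\alpha<2$, we get $\cos(c/r)>\cos(2\epsilon/r)$, and as both $c$ and $2\epsilon$ lie in $(0,\pi r)$ this yields $c<2\epsilon=\operatorname{len}(\gamma|_{[p,v]})+\operatorname{len}(\gamma|_{[v,q]})$. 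Replacing the portion of $\gamma$ between $p$ and $q$ by $\sigma$ therefore produces a strictly shorter path from $x$ to $y$, contradicting minimality. Hence $\gamma$ meets no interior vertex.

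The heart of the argument is the trigonometric shortcut inequality, which is a one-line computation valid for every $\alpha<\pi$; the genuinely delicate point is the first step, namely setting up the spherical-cone local model and verifying that the $\epsilon$-ball develops \emph{injectively} into $S^2_r$ so that the planar-looking triangle inequality transfers faithfully to $E_r$ and $\sigma$ is a legitimate path avoiding $v$. A secondary nuisance is the bookkeeping of the degenerate coincident-direction case and the possibility that $v$ is a breakpoint of $\gamma$ rather than an interior point of a single geodesic piece; in both situations the same two-sector picture applies, so neither affects the core estimate.
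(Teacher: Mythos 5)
Your proof is correct and follows the same corner-cutting strategy as the paper: since the total cone angle at $v$ is $\mathcal A_r(v)<2\pi$, one of the two sectors cut out by the incoming and outgoing arcs of $\gamma$ has angle $\alpha<\pi$, and developing that sector into $S^2_r$ produces a strictly shorter chord. Two points of execution differ, both in your favor. First, you develop only the wedge of angle $\alpha$ bounded by the two sub-arcs of $\gamma$; since $\alpha<\pi$, injectivity of the development is automatic, which quietly removes the paper's simplifying assumptions ($i_b\neq i_a-1$ and $\sum_k 2\phi_S(r,|P_k|)<2\pi$) needed there to embed the whole chain of polygons $P_{i_a}\cup\cdots\cup P_{i_b}$ into $S^2_r$. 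Second, you obtain strictness of the shortcut by the explicit law-of-cosines estimate $\cos(c/r)=1-\sin^2(\epsilon/r)(1-\cos\alpha)>\cos(2\epsilon/r)$, whereas the paper argues qualitatively that equality would force the broken path to be a minimizing, hence smooth, curve of $S^2_r$, contradicting the corner of angle $\theta_1<\pi$ at $v$; your computation is more elementary and quantitative. One phrasing to repair: the full $\epsilon$-ball of the cone does \emph{not} develop isometrically and injectively into $S^2_r$ when $\theta\neq 2\pi$ (the cone point is genuinely singular); what embeds isometrically is the closed $\alpha$-sector, obtained by gluing the corner wedges of the successive polygons, and that is all your argument uses. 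You should also note, as the paper's definition of $d$ requires, that the pulled-back chord $\sigma$ is itself a concatenation of geodesic arcs each lying in a single polygon, hence an admissible piece-wise geodesic witnessing $d(p,q)<2\epsilon$.
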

We postpone the proof of Proposition \ref{thm:Proposition_length-minizing_misses_vertices} to the end of this subsection. Now, we introduce piece-wise geodesics and prove Theorem \ref{thm:Theorem_spherical_Bonnet-Myers} using Proposition \ref{thm:Proposition_length-minizing_misses_vertices} above. 

A path $\gamma \colon [0,1] \rightarrow E_r$ is called a \textit{piece-wise geodesic} if there is a partition of $[0,l]$ given by $0 = a_1 < a_2 < \cdots < a_{m+1} = l$ such that each piece $\gamma([a_j,a_{j+1}])$ lies inside a single $r$-spherical polygon $P_{\beta_j}$ and $\gamma|_{[a_j,a_{j+1}]}$ is a geodesic segment of $P_{\beta_j} \subseteq S^2_r$ in the Riemannian metric of $S^2_r$. Each such segment $\gamma|_{[a_j,a_{j+1}]}$ has a well-defined length $\len(\gamma|_{[a_j,a_{j+1}]})$ as measured in the Riemannian metric of $P_{\beta_j} \subseteq S^2_r$. Using this, the \textit{length} of a piece-wise geodesic $\gamma$ is defined to be $\len(\gamma):=\sum_{j=1}^m \len(\gamma|_{[a_j,a_{j+1}]})$. We caution that a generic piece-wise geodesic $\gamma$ from $x$ to $y$ may not be the shortest curve joining $x$ to $y$. 
\begin{Definition}\label{thm:Definition_spherical_pseudo_metric}
    A pseudo-metric $d \colon E_r \times E_r \rightarrow \mathbb [0,\infty)$ on $E_r$ is defined as follows:
    \begin{equation}\label{eqn:spherical_metric}
        d(x,y) := \inf \{\len(\gamma) \colon \gamma \text{ is a piece-wise geodesic from } x \text{ to } y \}.
    \end{equation}
\end{Definition}
The infimum is always finite because any two points in $E_r$ can be joined by a piece-wise geodesic. Further, the function $d$ is a pseudo-metric because it satisfies (i) $d(x,x)=0$ for all $x \in X$, (ii) the symmetry condition: $d(x,y) = d(y,x)$ for all $x,y \in X$, and (iii) the triangle inequality.
\begin{Lemma}[Properties of the metric $d$ on $E_r$]\label{thm:Lemma_spherical_metric}
    Let $d \colon E_r \times E_r \rightarrow \mathbb [0,\infty)$ be the pseudo-metric defined in equation \ref{eqn:spherical_metric}. Then, $d$ has the following properties:
    \begin{enumerate}
        \item[(a)] the function $d$ is a metric on $E_r$;
        \item[(b)] for all points $x,y \in E_r$, there is a \textit{minimizing piece-wise geodesic} joining $x$ and $y$; this means that there is a piece-wise geodesic from $x$ to $y$ whose length is exactly $d(x,y)$;
        \item[(c)] every closed and bounded set of $E_r$ is compact;
        \item[(d)] the open set $E_r \setminus V$ is a smooth connected surface with a canonical Riemannian metric $g$ of constant curvature $+1/r^2$. Further, the distance function $d_g$ induced by $g$ is equal to the distance function $d$ restricted to $E_r\setminus V$. 
    \end{enumerate}
\end{Lemma}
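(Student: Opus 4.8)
The plan is to treat the four assertions in the order (d)-structure, (a), then (c) and (b) together, and finally the distance comparison in (d), since each step rests on the same local geometric model of $E_r$ and on the fact, already recorded in Proposition \ref{thm:Proposition_length-minizing_misses_vertices}, that minimizing piece-wise geodesics avoid the cone points. First I would build the smooth atlas on $E_r\setminus V$ asserted in (d). Away from the vertices, every point lies either in the interior of a single $r$-spherical polygon, which is by construction an isometric copy of a region of $S^2_r$ and hence furnishes a chart into $S^2_r$, or in the interior of an edge $e$ shared by $P_\alpha$ and $P_\beta$. In the latter case I develop $P_\alpha$ isometrically into $S^2_r$, sending $e$ to a geodesic arc, and then develop $P_\beta$ across that arc by the unique isometry of $S^2_r$ prescribed by the spherical gluing of $e$; the two half-discs meet along the geodesic with total angle $\pi+\pi=2\pi$, so their union is an open subset of $S^2_r$ and gives a smooth chart. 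Since all transition maps are restrictions of isometries of $S^2_r$, the pulled-back round metric is well defined and yields a Riemannian metric $g$ on $E_r\setminus V$ of constant curvature $+1/r^2$ agreeing with $|dz|^2_{S^2_r}$ on each $\operatorname{Int}(P_\alpha)$; the same development shows that a punctured neighborhood of a vertex $v$ is isometric to a punctured spherical cone of cone angle $\mathcal A_r(v)<2\pi$.

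Using this, every $p\in E_r$ has a neighborhood isometric, in the $d$-metric, to a small metric disc of $S^2_r$ (if $p\notin V$) or to a small spherical cone of angle $\mathcal A_r(v)<2\pi$ (if $p=v\in V$); in either model $d$ restricted to the neighborhood is a genuine metric inducing the topology of $E_r$. Assertion (a) then follows: if $x\ne y$, fix such a model neighborhood $U$ of $x$ of radius $\epsilon$; any piece-wise geodesic from $x$ to $y$ either stays in $U$, where its length is at least the positive model distance from $x$ to $y$, or leaves $U$, contributing length at least $\epsilon$, so $d(x,y)>0$. I would also extract here the key finiteness input: since the interior angle $2\phi_S(r,k)$ is increasing in $k$ and hence bounded below by $2\phi_S(r,3)>0$, the hypothesis $\mathcal A_r(v)<2\pi$ forces every vertex to have degree at most $\pi/\phi_S(r,3)$; together with the unit edge length this produces a single $\epsilon_0>0$, depending only on $r$, such that every closed ball $\bar B_d(p,\epsilon_0)$ lies in the finite union of polygons meeting $p$ and is therefore compact.

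The uniform compact balls make $(E_r,d)$ complete (a Cauchy sequence eventually lies in one such compact ball, so subconverges, hence converges) and locally compact, and $d$ is by construction a length metric, so the Hopf--Rinow--Cohn-Vossen theorem for length spaces gives (c): closed bounded sets are compact. For (b) I take a minimizing sequence of piece-wise geodesics from $x$ to $y$, reparametrize by constant speed, confine them to the compact set $\bar B_d\big(x,d(x,y)+1\big)$, and extract via Arzel\`a--Ascoli a uniformly convergent subsequence; the limit is a path of length $d(x,y)$ that is locally length-minimizing. The remaining task is to upgrade this limit to an honest \emph{finite} piece-wise geodesic: its compact image is covered by finitely many model neighborhoods, and in each such neighborhood a minimizer is a spherical geodesic crossing at most one edge (disc case) or one that avoids the apex by Proposition \ref{thm:Proposition_length-minizing_misses_vertices} and hence crosses at most the degree-many edges of the cone (cone case), so the limit consists of finitely many spherical-geodesic pieces, each inside a single polygon.

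Finally, for the distance comparison in (d), the inequality $d\le d_g$ on $E_r\setminus V$ holds because any $g$-rectifiable path between non-vertex points can be approximated by piece-wise geodesics of nearly equal length; for $d\ge d_g$, given $x,y\in E_r\setminus V$ I take the minimizing piece-wise geodesic $\gamma$ produced by (b), which by Proposition \ref{thm:Proposition_length-minizing_misses_vertices} avoids all vertices, so $\gamma\subseteq E_r\setminus V$ and $d(x,y)=\len(\gamma)=L_g(\gamma)\ge d_g(x,y)$, yielding equality. I expect the main obstacle to be the third step: establishing that $(E_r,d)$ is a complete, proper length space when $E_r$ may be infinite, and above all showing that the Arzel\`a--Ascoli limit of minimizing piece-wise geodesics is again a \emph{finite} piece-wise geodesic rather than merely a rectifiable local geodesic. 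Bounding the number of edge-crossings is exactly where the cone-angle condition $\mathcal A_r(v)<2\pi$ does the real work, both through Proposition \ref{thm:Proposition_length-minizing_misses_vertices} and through the resulting uniform bound on vertex degrees.
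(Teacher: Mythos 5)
Your overall strategy matches the paper's quite closely: the Riemannian metric on $E_r\setminus V$ is obtained exactly as you describe, by developing adjacent polygons into $S^2_r$ so that all transition maps are spherical isometries, and parts (b) and (c) are obtained in the paper from the Hopf--Rinow theorem for complete, locally compact path metric spaces, which is the same engine you invoke for (c). Two genuine differences are worth recording. First, for (b) you run Arzel\`a--Ascoli on a minimizing sequence and then explicitly upgrade the limit to a \emph{finite} piece-wise geodesic; the paper instead quotes the Hopf--Rinow conclusion that a minimizing geodesic exists and is silent on why that metric-space geodesic is a piece-wise geodesic in the sense of the definition, so your extra step is a real gain in completeness. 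Second, your proof of $d\ge d_g$ in (d) is shorter than the paper's: the paper subdivides an arbitrary piecewise smooth $g$-path into locally minimizing spherical arcs and concatenates them into a competitor piece-wise geodesic, whereas you feed the minimizer from (b) through Proposition \ref{thm:Proposition_length-minizing_misses_vertices}.

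That last point is also where the one real gap sits. Lemma \ref{thm:Lemma_spherical_metric} is stated for an arbitrary regular $r$-spherical surface, with no assumption on the angle-sums, and the paper's appendix proof uses none. You import the hypothesis $\mathcal A_r(v)<2\pi$ in three places: the degree bound giving a uniform $\epsilon_0$ (harmless --- local compactness and completeness do not need uniformity, and a lower bound on the distance from a point to the edges not containing it already follows from the fact that all polygons are unit regular $k$-gons with $3\le k<2\pi r$); the claim, via Proposition \ref{thm:Proposition_length-minizing_misses_vertices}, that the Arzel\`a--Ascoli limit avoids the cone points; and the claim that the minimizer from (b) lies in $E_r\setminus V$, which is what makes your $d\ge d_g$ argument work. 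When some $\mathcal A_r(v)\ge 2\pi$ a minimizing path may legitimately pass through $v$; the upgrade step survives (such a limit is still a finite concatenation of spherical geodesic arcs, now with a breakpoint at $v$), but the $d\ge d_g$ step then needs either a perturbation pushing the path off the vertices at arbitrarily small cost, or the paper's subdivision argument. Since the lemma is only ever applied under the hypothesis of Theorem \ref{thm:Theorem_spherical_Bonnet-Myers}, where $\mathcal A_r(v)<2\pi$ holds, and since Proposition \ref{thm:Proposition_length-minizing_misses_vertices} is proved independently of this lemma (so there is no circularity), your argument suffices for the application; but as a proof of the lemma as stated it needs the repairs above.
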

We defer the proof of Lemma \ref{thm:Lemma_spherical_metric} to Appendix \ref{Appendix_metric} and proceed to discuss the Bonnet-Myers theorem of Riemannian geometry.

\medskip
\noindent \textbf{Bonnet-Myers Theorem (convex version).}\label{thm:Proposition_modified_Bonnet_Myer}
\textit{Let $M$ be a connected Riemannian manifold all of whose sectional curvatures are bounded below by the positive constant $1/r^2$. Further, suppose that any two points $x,y$ in $M$ can be joined by a minimizing geodesic. Then, $M$ has a diameter less than or equal to $\pi r$.}
\begin{Remark}
    The usual hypothesis in the Bonnet-Myers theorem is that $M$ is a complete manifold. But on examining the proof of the theorem, we realize that the assumption about completeness is only used to show that $M$ is convex, that is, any two points can be joined by a minimizing geodesic. Thus, we can restrict the hypothesis to the convexity of $M$. As an example, note that the convex version of the Bonnet-Myers theorem stated above can be applied to a convex open ball $B(p,\delta)$ of the sphere $S^2$ to deduce that the diameter of the ball is less than $\pi r$. The usual version cannot be applied because the open ball $B(p,\delta)$ is not complete.
\end{Remark}
For a proof of the Bonnet-Myers Theorem, see, for example, Theorem 11.7 of \cite[pg. 200]{lee2006riemannian} or Theorem 3.1 of \cite[pg. 200]{do1992riemannian}. 
\begin{proof}[Proof of Theorem \ref{thm:Theorem_spherical_Bonnet-Myers}]
    Let $E_r$ be a regular $r$-spherical surface with $r$-spherical angle-sum strictly less than $2\pi$ at each vertex. Using Lemma \hyperref[thm:Lemma_spherical_metric]{3.4(b,d)} along with Proposition \ref{thm:Proposition_length-minizing_misses_vertices}, we deduce that $E_r \setminus V$ is a connected and convex Riemannian manifold of constant curvature $+1/r^2$. Then, we use the Bonnet-Myers theorem (convex version) to deduce that the diameter of $E_r$ is at most $\pi r$. As the closure of $E_r \setminus V$ is $E_r$, the diameter of $E_r$ is also lesser than or equal to $\pi r$. Lastly, as $E_r$ is closed and bounded, we deduce that $E_r$ is compact by Lemma \hyperref[thm:Lemma_spherical_metric]{3.4(c)}.
\end{proof}

We conclude this subsection with the proof of Proposition \ref{thm:Proposition_length-minizing_misses_vertices}.

\begin{proof}[Proof of Proposition \ref{thm:Proposition_length-minizing_misses_vertices}]
    We shall prove the contrapositive. Let $E_r$ be a regular $r$-spherical surface with $r$-spherical angle-sum strictly less than $2\pi$ at each vertex. Given a piece-wise geodesic $\gamma$ from $x$ to $y$, suppose that the interior of $\gamma$ contains a vertex $v$ of $E_r$, that is, $\gamma(t)=v$ for some $0<t<1$. Then, we shall show that $\operatorname{len}(\gamma)>d(x,y)$.

    Let $P_{1},P_{2},\dots, P_{d}$ be the cyclic arrangement of $r$-spherical polygons around the vertex $v$ in $E_r$. As $\gamma$ is a piece-wise geodesic, there exists $0<a<t<b<1$ such that $\gamma|_{[a,t]}$ and $\gamma|_{[t,b]}$ are geodesic segments in some $r$-spherical polygons $P_{i_a}$ and $P_{i_b}$, respectively. We leave the case of $i_a=i_b$ for the reader (see Figure \ref{fig:geodesic_segment_simple}).

    \begin{figure}[t!]
        \centering
        {
            \hfill
            \includegraphics[width=0.36\textwidth]{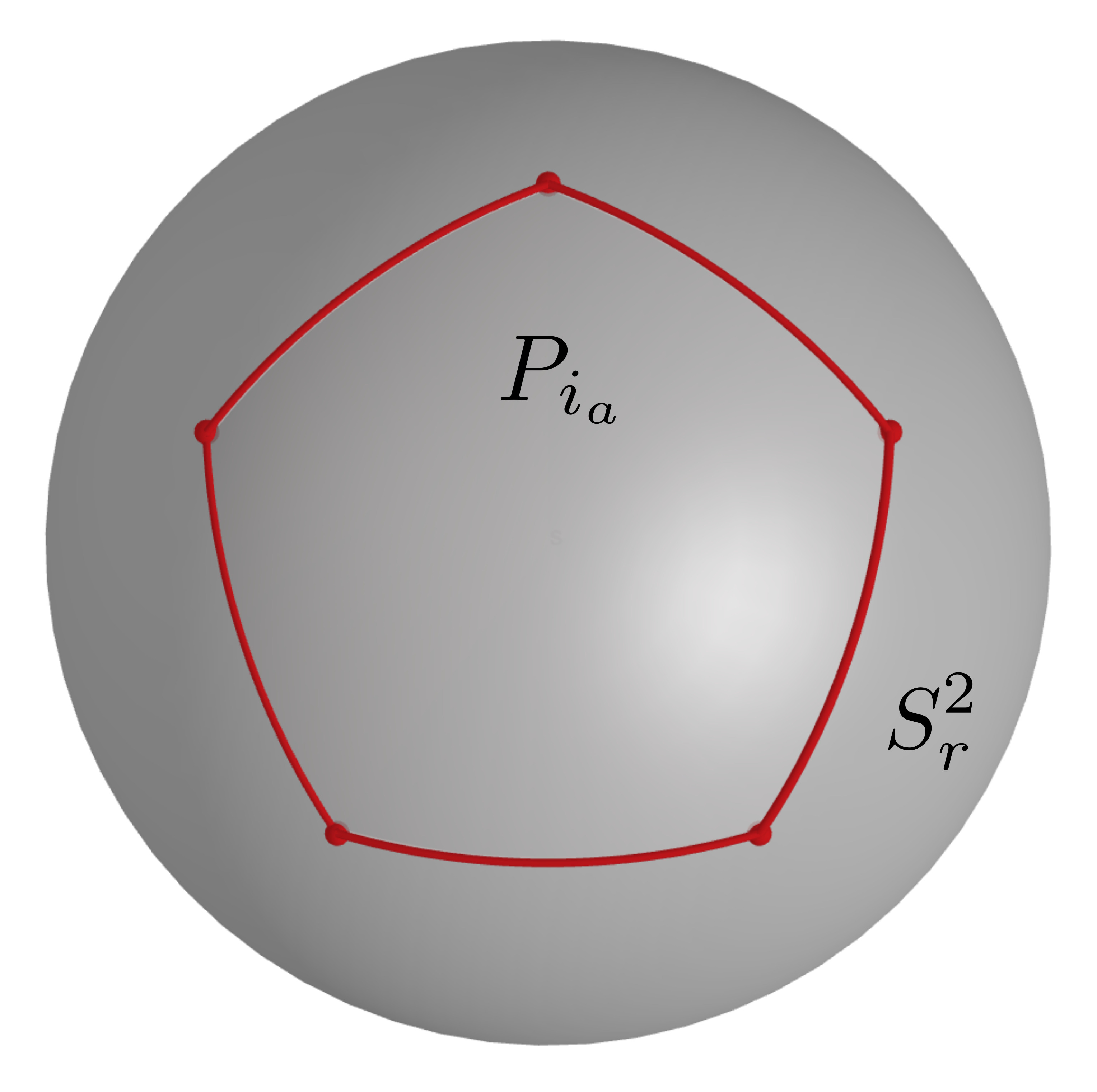} \hfill \hfill
            \includegraphics[width=0.36\textwidth]{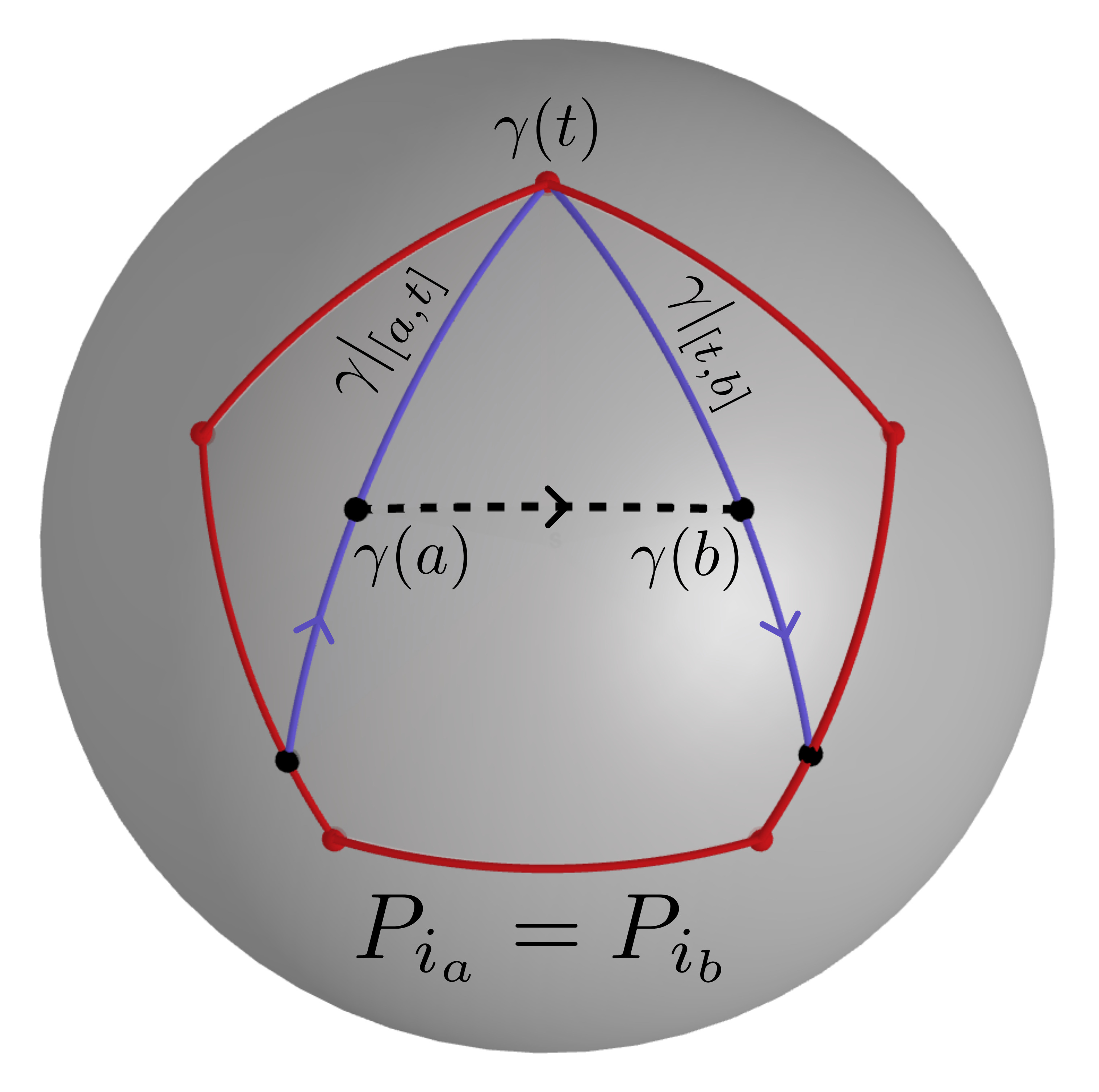}
            \hfill 
        }
        \caption{Left: A spherical polygon $P_{i_a}$ embedded in the sphere $S^2_r$. Right: A piece-wise geodesic $\gamma$ in $P_{i_a}$ that is not length-minimizing.}
        \label{fig:geodesic_segment_simple}
    \end{figure}
    
    Now suppose $i_a \neq i_b$ and observe that $\gamma|_{[a,t]} \cup \gamma|_{[t,b]}$ cuts a neighborhood of the vertex $v$ into two ``sectors" (see Figure \ref{fig:geodesic_segment}). Moreover, we can define the angles $\theta_1,\theta_2$ of the two sectors as follows. Define 
    \begin{equation*}
        \theta_1 := \theta(\gamma|_{[a,t]},e_{i_a}) + \sum_{k=i_a + 1}^{i_b -1} 2\phi_S(r,|P_k|)  + \theta(e_{i_b-1}, \gamma|_{[t,b]}),
    \end{equation*}
    where $e_{i_a}$ is the common edge between $P_{i_a}$ and $P_{i_a+1}$ containing the vertex $v$ as one of its vertices; $\theta(\gamma|_{[a,t]},e_{i_a})$ is the acute angle between $\gamma|_{[a,t]}$ and $e_{i_a}$ at the vertex $v$ in $P_{i_a}$; the angle $2\phi_S(r,|P_k|)$ is the interior angle of the polygon $P_{k}$ at the vertex $v$; and $\theta(e_{i_b-1}, \gamma|_{[t,b]})$ is the acute angle between $e_{i_b-1}$ and $\gamma|_{[t,b]}$ at the vertex $v$ in $P_{i_b}$. Similarly, define the complementary angle as 
    \begin{equation*}
        \theta_2 := \theta(\gamma|_{[t,b]},e_{i_b}) + \sum_{k=i_b + 1}^{i_a -1} 2\phi_S(r,|P_k|) + \theta(e_{i_a-1}, \gamma|_{[a,t]}).
    \end{equation*}
    The two angles $\theta_1$ and $\theta_2$ are complementary in the sense that $\theta_1 + \theta_2 = \mathcal{A}_r(v)$. Next, recall that the angle-sum $\mathcal{A}_r(v)$ is strictly lesser than $2\pi$. As $\theta_1 + \theta_2 = \mathcal{A}_r(v)$, it follows that one of the angles $\theta_i$ is strictly lesser than $\pi$. Now, Proposition \ref{thm:Proposition_length-minizing_misses_vertices} boils down to the following claim.
    \begin{Claim} 
        If one of the angles $\theta_i$ is strictly lesser than $\pi$, then the piece-wise geodesic $\gamma$ is not a minimizing piece-wise geodesic (see Figure \ref{fig:geodesic_segment}).
    \end{Claim}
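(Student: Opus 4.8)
The plan is to reduce the claim to the elementary fact that on the round sphere $S^2_r$ a broken geodesic with a corner of interior angle strictly less than $\pi$ can always be shortened by ``cutting the corner''. Fix the side of $v$ on which the sector angle is strictly less than $\pi$; say $\theta_1 < \pi$, and let $P_{i_a}, P_{i_a+1}, \dots, P_{i_b}$ be the polygons swept out by this sector (the degenerate case $i_a = i_b$, where both segments lie in a single polygon, is handled the same way with a trivial development). First I would \emph{develop} this fan of spherical polygons isometrically into a single copy of $S^2_r$: place $P_{i_a}$ by an isometry of $S^2_r$ sending $v$ to a point $\tilde v$, and then successively lay down $P_{i_a+1}, \dots, P_{i_b}$ by the unique spherical isometries matching the already-placed shared edges. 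Because each polygon is by definition an isometrically embedded subset of $S^2_r$ and consecutive polygons are glued by spherical isometries, this development is a local isometry on a neighborhood of $v$ inside the sector.

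The key point is that the total cone angle of the sector at $v$ equals $\theta_1 < \pi < 2\pi$, so on a sufficiently small geodesic ball $B(\tilde v, \delta) \subseteq S^2_r$ the development is injective and its image is an honest spherical sector of angle $\theta_1$. Under this development the two geodesic pieces $\gamma|_{[a,t]}$ and $\gamma|_{[t,b]}$ become two geodesic segments of $S^2_r$ issuing from $\tilde v$ and bounding the sector, so the angle between them at $\tilde v$ equals $\theta_1 < \pi$.

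Next I would choose parameters $a < a' < t < b' < b$ close enough to $t$ that the developed images $p$ and $q$ of $\gamma(a')$ and $\gamma(b')$ both lie in $B(\tilde v, \delta/2)$, where $\delta$ is taken smaller than the injectivity radius and the convexity radius of $S^2_r$. Since the corner angle $\theta_1$ is strictly less than $\pi$, the three points $p, \tilde v, q$ do not lie on a common geodesic; hence the unique minimizing geodesic $\sigma$ of $S^2_r$ from $p$ to $q$ is strictly shorter than the broken path $p \to \tilde v \to q$, by the strict triangle inequality for non-degenerate geodesic triangles on the sphere. By convexity of $B(\tilde v, \delta)$, the geodesic $\sigma$ stays inside the embedded sector, so it pulls back along the development to an honest piece-wise geodesic $\sigma'$ in $E_r$ from $\gamma(a')$ to $\gamma(b')$ with $\len(\sigma') = \len(\sigma)$.

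Finally, replacing the sub-arc $\gamma|_{[a',b']}$ by $\sigma'$ produces a piece-wise geodesic from $x$ to $y$ of length strictly less than $\len(\gamma)$, whence $\len(\gamma) > d(x,y)$ and $\gamma$ is not minimizing, proving the claim. I expect the main obstacle to be the development step: one must verify both that the fan embeds (which uses $\theta_1 < 2\pi$) and, more delicately, that for $a', b'$ sufficiently near $t$ the shortcut $\sigma$ remains within the embedded sector so that it descends to a genuine piece-wise geodesic of $E_r$. This is precisely where the smallness of the neighborhood and the convexity of small spherical balls are essential.
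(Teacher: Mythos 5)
Your proposal is correct and follows essentially the same route as the paper: develop the fan of spherical polygons around $v$ isometrically into $S^2_r$, take points on the two geodesic legs close to $v$, and replace the corner by the minimizing spherical geodesic between them, which stays in the developed region for points sufficiently near $v$. The only cosmetic difference is how strictness of the shortening is justified --- you invoke the strict triangle inequality for a non-degenerate spherical triangle, while the paper argues that a minimizing curve in $S^2_r$ must be smooth whereas $\gamma$ has a corner of angle $\theta_1<\pi$; these are interchangeable.
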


    \begin{figure}[t!]
        \centering
        {
            \hfill
            \includegraphics[width=0.39\textwidth]{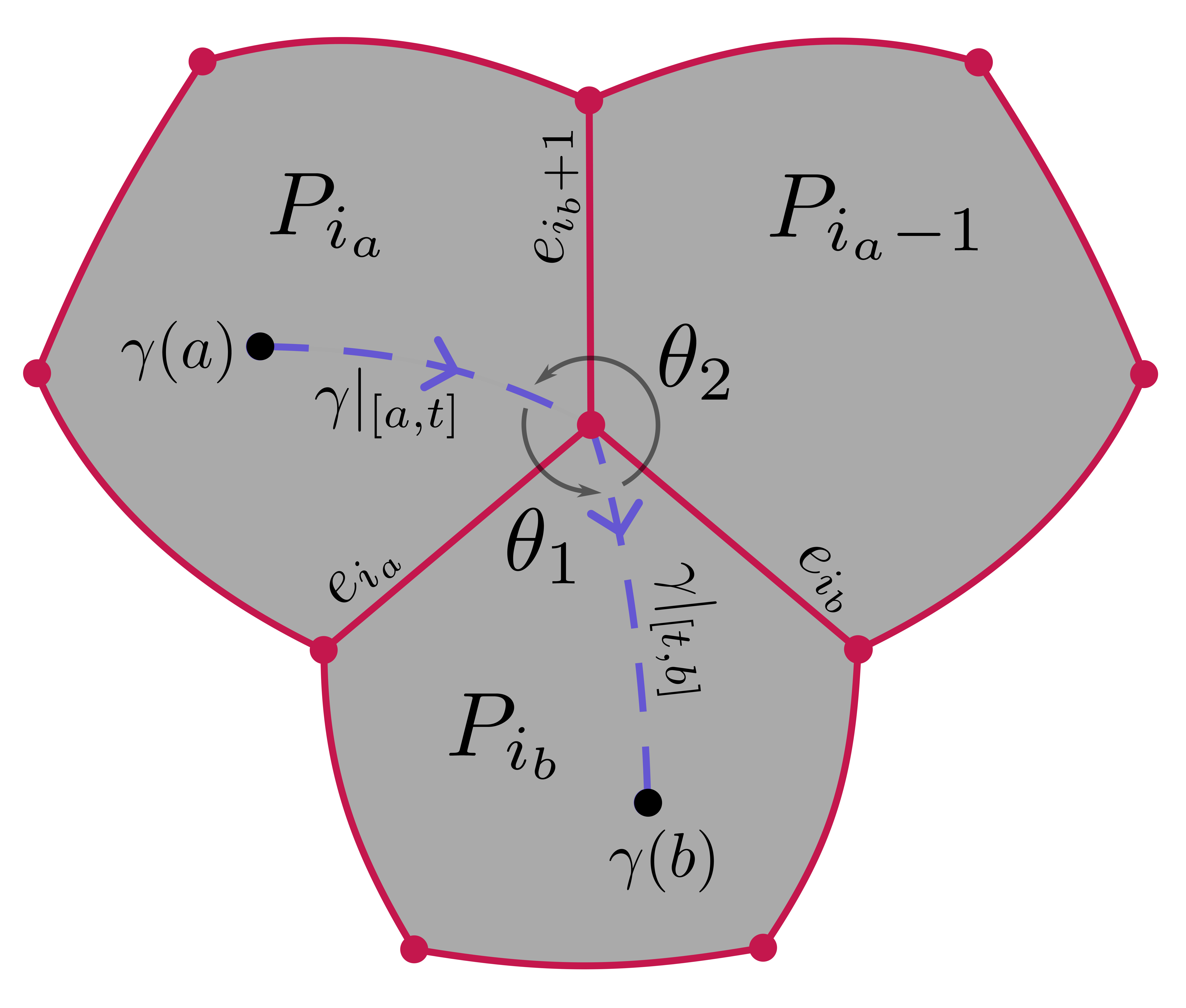} \hfill \hfill
            \includegraphics[width=0.36\textwidth]{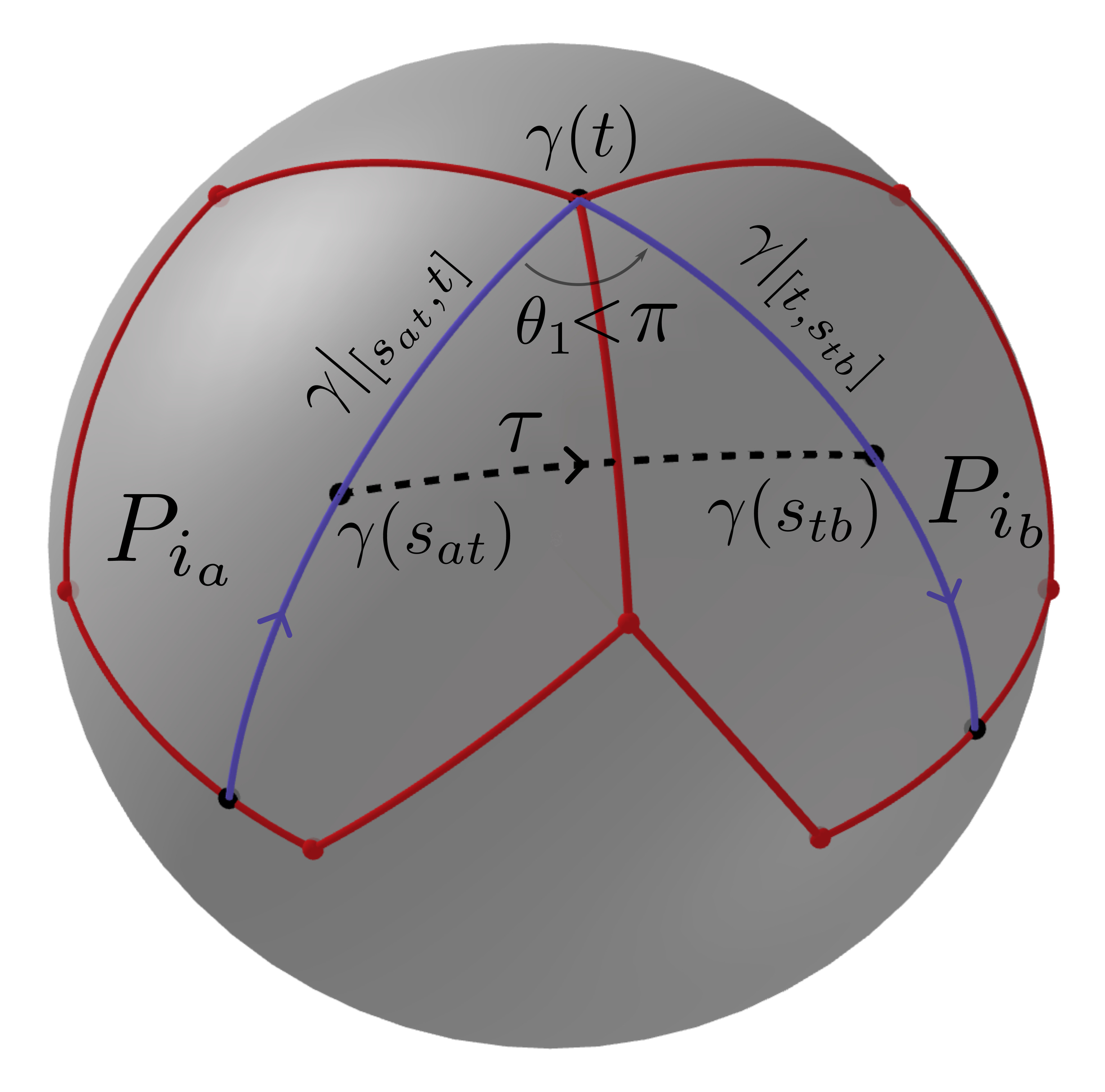}
            \hfill 
        }
        \caption{Left: Two complementary angles $\theta_1$ and $\theta_2$ around a piece-wise geodesic $\gamma$ passing through a vertex. Right: If $\theta_1<\pi$, then $\gamma$ is not length-minimizing because there is a shorter path using $\tau$.}
        \label{fig:geodesic_segment}
    \end{figure}
    To prove the claim we shall show there exists two points $\gamma(s_{at})$ and $\gamma(s_{tb})$ such that $s_{at}\in (a,t)$, $s_{tb}\in(t,b)$, and $d(\gamma(s_{at}), \gamma(s_{tb})) < \len(\gamma|_{[s_{at},t]}) + \len(\gamma|_{[t,s_{tb}]})$. This tells us $\gamma|_{[s_{at},s_{tb}]}$ is not a minimizing piece-wise geodesic. Without loss of generality, we assume that $\theta_1<\pi$. Also, for simplicity of proof, we assume that $i_b \neq i_a - 1$ and the sum $\sum_{k=i_a}^{i_b} 2\phi_S(r,|P_k|)$ is strictly less than $2\pi$. Now, we would like to embed the union $P_{i_a}\cup P_{i_a+1} \cup \cdots \cup P_{i_b}$ into $S^2_r$ in order to better understand the minimizing piece-wise geodesics. 
    
    We achieve the embedding by induction. First, note that the polygons $P_{i}$ are defined as a closed subset of $S^2_r$; we use this to embed them. Start with some embedding of $P_{i_a}$ in $S^2_r$. Next, embed $P_{i_a+1}$ into $S^2_r$ such that the embedding of $P_{i_a}$ and $P_{i_a+1}$ agree on the common edge $e_{i_a}$. This can be achieved by using an isometry of $S^2_r$. Then, we glue the two embeddings together. To show that the glued map is an embedding, it suffices to show that the glued map is injective. The injectivity of the glued map follows because the sum of angles at each of two common vertices of $P_{i_a}$ and $P_{i_a+1}$ is strictly less than $2\pi$. It is here that we use the simplifying assumption that $\sum_{k=i_a}^{i_b} 2\phi_S(r,|P_k|)$ is strictly less than $2\pi$. Continuing in a similar fashion, we embed $P_{i_a+2}$ into $S^2_r$ such that the embedding of $P_{i_a+1}$ and $P_{i_a+2}$ agrees on the common edge $e_{i_a+1}$. Repeating the argument finitely many times, we obtain an embedding of $P_{i_a} \cup P_{i_a+1} \cup \cdots \cup P_{i_b}$ into $S^2_r$.

    Now, we find $\gamma(s_{at})$ and $\gamma(s_{tb})$ such that $d(\gamma(s_{at}), \gamma(s_{tb})) < \len(\gamma|_{[s_{at},t]}) + \len(\gamma|_{[t,s_{tb}]})$. Consider a small $\epsilon$ such that the boundary $\partial B(v,\epsilon)$ of a ball in $S^2_r$ intersects both $\gamma|_{[a,t]}$ and $\gamma|_{[t,b]}$. Let the intersection points be $\gamma(s_{at})$ and $\gamma(s_{tb})$. Note that $a<s_{at}<t$ and $t<s_{tb}<b$. Next, consider the minimizing geodesic $\tau \colon [0,1] \rightarrow S^2_r$ in $S^2_r$ from $\gamma(s_{at})$ to $\gamma(s_{tb})$. By choosing $\epsilon$ sufficiently small, we can ensure that the image of $\tau$ lies completely in the subset $P_{i_a} \cup P_{i_a+1} \cup \cdots \cup P_{i_b}$ embedded in $S^2_r$. Lastly, we show $\tau$ is shorter than $\gamma|_{[s_{at},s_{tb}]}$, that is, $\len(\tau) < \len(\gamma|_{[s_{at},t]}) + \len(\gamma|_{[t,s_{tb}]})$. Note that if the lengths were equal, then $\gamma|_{[s_{at},s_{tb}]}$ would also be a length-minimizing curve between $\gamma(s_{at})$ and $\gamma(s_{tb})$ in $S^2_r$. But a length-minimizing curve in $S^2_r$ is necessarily smooth, while $\gamma|_{[s_{at},s_{tb}]}$ is not smooth at the vertex $v$ as angle at that point is $\theta_1 < \pi$, that is, we have a contradiction. This completes the proof of the claim and Proposition \ref{thm:Proposition_length-minizing_misses_vertices}.
\end{proof}

\subsection{Culmination and conclusion of Theorem \ref{thm:Theorem_polygonal_Bonnet_Myers} (polygonal Bonnet-Myers)}\label{subsec:culmination_bonnet_myers}
\begin{Proposition}\label{thm:Proposition_polygonal_to_spherical}
   Let $E$ be a regular polygonal surface. Suppose the following conditions hold.
    \begin{enumerate}
        \item[(a)] There is a constant $N$ such that each polygon in $E$ has at most $N$ sides.
        \item[(b)] The angle-sum $\mathcal{A}(v)$ at each vertex of $E$ is strictly lesser than $2\pi$.
    \end{enumerate} 
    Then, there is a large $r>0$ and a regular $r$-spherical surface $E_r$, such that $E_r$ is homeomorphic to $E$ and has $r$-spherical angle-sum $\mathcal{A}_r(v)$ strictly less than $2\pi$ at each vertex. 
\end{Proposition}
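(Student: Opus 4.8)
The plan is to exploit the pointwise limit $\lim_{r\to\infty}\mathcal{A}_r(v)=\mathcal{A}(v)$ established in Subsection \ref{subsec:spherical_surface}, and to upgrade it to a uniform statement so that a single large $r$ works simultaneously at every vertex of $E$. The central structural observation is that $E$ has only finitely many \emph{combinatorial types} of vertices: by hypothesis (a) each polygon has at most $N$ sides, and in a regular polygonal surface the degree $d$ of any vertex is bounded above as well (each of the $d$ polygons around $v$ contributes an interior angle at least $\pi - 2\pi/3 = \pi/3$, the smallest occurring for a triangle, so $d \le \mathcal{A}(v)/(\pi/3) < 6$ once we know $\mathcal{A}(v)<2\pi$; more crudely, $d\le 2\pi/(\pi-2\pi/k_{\min})$ is bounded). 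Hence each vertex-type $[k_1,\dots,k_d]$ is a tuple of bounded length with entries in $\{3,4,\dots,N\}$, and there are only finitely many such tuples.

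First I would make the $r$-spherical polygons well-defined: a unit regular $r$-spherical $n$-gon requires $r>n/(2\pi)$, so I choose $r$ large enough that $r>N/(2\pi)$, guaranteeing every polygon in $E$ (each with at most $N$ sides) has a genuine $r$-spherical counterpart. Then I construct $E_r$ by replacing each unit regular Euclidean $n$-gon $P_\alpha$ of $E$ with the corresponding unit regular $r$-spherical $n$-gon, and gluing along edges by $r$-spherical isometries exactly as the original edges of $E$ were glued; since each polygon has a unit-length boundary in both the Euclidean and the $r$-spherical metric, the edge identifications remain isometric, so $E_r$ is a bona fide regular $r$-spherical surface in the sense of Definition \ref{definition:regular_spherical_surface}, and the combinatorics (and hence the homeomorphism type) of $E_r$ matches $E$ tile-for-tile.

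Next I would verify the angle-sum condition. For each of the finitely many vertex-types $\tau$ appearing in $E$, the function $r\mapsto \mathcal{A}_r^\tau := \sum_i 2\phi_S(r,k_i)$ tends to $\mathcal{A}^\tau < 2\pi$ as $r\to\infty$. Choosing, for each type $\tau$, a threshold $r_\tau$ beyond which $\mathcal{A}_r^\tau<2\pi$, and setting $r^*:=\max(N/(2\pi),\ \max_\tau r_\tau)$ — a maximum over a \emph{finite} set — I obtain a single $r^*$ for which $\mathcal{A}_{r^*}(v)<2\pi$ holds \emph{simultaneously} at every vertex $v$ of $E$, since each such $v$ realizes one of the finitely many types. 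Picking any $r\ge r^*$ and the associated $E_r$ completes the construction.

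The main obstacle, and the step deserving the most care, is precisely the reduction to finitely many vertex-types: the pointwise convergence $\mathcal{A}_r(v)\to\mathcal{A}(v)$ is automatic, but without finiteness one could not pass from ``for each $v$ there is a large enough $r$'' to ``there is a large enough $r$ for all $v$,'' and the conclusion could genuinely fail for an infinite surface with unbounded vertex data. This is exactly where hypothesis (a) is indispensable, and it is worth recording explicitly that (a) together with $\mathcal{A}(v)<2\pi$ forces a uniform upper bound on the degree $\deg(v)$, so that the set of vertex-types is finite; the uniform bound on $r$ then follows from taking a maximum over this finite set. A minor technical point to confirm in passing is that $\phi_S(r,n)$ depends continuously (indeed monotonically) on $r$ via equation \ref{eqn:spherical_angle-sum}, which justifies the existence of the thresholds $r_\tau$.
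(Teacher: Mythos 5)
Your proposal is correct and follows essentially the same route as the paper: replace each unit regular Euclidean $n$-gon by its unit regular $r$-spherical counterpart (for $r>N/(2\pi)$) glued in the same pattern, observe that $\mathcal{A}_r(v)$ depends only on $r$ and the vertex-type, note that hypothesis (a) together with $\mathcal{A}(v)<2\pi$ bounds the degree and hence leaves only finitely many vertex-types, and take the maximum of the finitely many thresholds. The paper's proof is organized around an auxiliary lemma (Lemma \ref{thm:Lemma_polygonal_to_spherical}) that makes the homeomorphism $E_t\to E$ explicit via edge-isometric maps $g_\alpha\colon P_{t,\alpha}\to P_\alpha$, but the substance of your argument, including the key finiteness reduction, is identical.
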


Theorem \ref{thm:Theorem_polygonal_Bonnet_Myers} is now a corollary of Proposition \ref{thm:Proposition_polygonal_to_spherical} and Theorem \ref{thm:Theorem_spherical_Bonnet-Myers}. To see this, suppose $E$ is a regular polygonal surface that satisfies the hypothesis of Theorem \ref{thm:Theorem_polygonal_Bonnet_Myers}, that is, $E$ satisfies conditions (a) and (b) given above. Using Proposition \ref{thm:Proposition_polygonal_to_spherical}, we obtain a regular $r$-spherical surface $E_r$ that is homeomorphic to $E$. This $r$-spherical surface satisfies the hypothesis of Theorem \ref{thm:Theorem_spherical_Bonnet-Myers} because every $r$-spherical angle-sum in $E_r$ is strictly less than $2\pi$. Hence, Theorem \ref{thm:Theorem_spherical_Bonnet-Myers} implies that $E_r$ and hence $E$ are compact. 

It only remains to prove Proposition \ref{thm:Proposition_polygonal_to_spherical}. We first prove a useful lemma.
\begin{Lemma}[Replacing polygons with spherical polygons]\label{thm:Lemma_polygonal_to_spherical}
    Suppose $E$ is a regular polygonal surface and $N$ is a constant such that each polygon in $E$ has at most $N$ sides. Let $t$ be a number such that $t>N/(2\pi)$. Then, there is a $t$-spherical surface $E_t$ homeomorphic to $E$. Furthermore, the homeomorphism between $E$ and $E_t$ takes vertices, edges, and polygons of $E$ to vertices, edges, and spherical polygons of $E_t$, respectively. In particular, the vertex-type of a vertex $v_{E_t}$ of $E_t$ is the same as the vertex-type of the corresponding vertex $v$ in $E$.
\end{Lemma}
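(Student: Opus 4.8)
The plan is to build $E_t$ by keeping the combinatorial gluing data of $E$ unchanged and merely replacing each flat tile by its spherical counterpart of the same size. Write $E = \bigcup_\alpha P_\alpha / {\sim}$, where $P_\alpha$ is a unit regular Euclidean $n_\alpha$-gon, and for each $\alpha$ let $P_\alpha^t$ denote the unit regular $t$-spherical $n_\alpha$-gon. Since $n_\alpha \le N$ and $t > N/(2\pi) \ge n_\alpha/(2\pi)$, every $P_\alpha^t$ is well-defined (the condition $t > n/(2\pi)$ required to construct a unit regular $t$-spherical $n$-gon is met). Each edge of $P_\alpha$ has length $1$, and each edge of $P_\alpha^t$ is a geodesic segment of length $1$; hence for every edge identification $e_1 \sim e_2$ of $E$ there is a spherical isometry carrying the corresponding spherical edge $e_1^t$ onto $e_2^t$, and I use it to glue $P_\alpha^t$ to its neighbour along that edge, matching endpoints exactly as in $E$. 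Performing all these gluings yields $E_t := \bigcup_\alpha P_\alpha^t / {\sim_t}$, which is a regular $t$-spherical surface in the sense of Definition \ref{definition:regular_spherical_surface}.

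Next I construct a homeomorphism $\Phi \colon E \to E_t$ out of a family of per-tile homeomorphisms $\phi_\alpha \colon P_\alpha \to P_\alpha^t$. Both $P_\alpha$ and $P_\alpha^t$ are star-shaped about a centre ($c_\alpha$ and $c_\alpha^t$) and decompose into $n_\alpha$ congruent isosceles triangles with unit base and apex angle $2\pi/n_\alpha$. I first fix an arc-length homeomorphism $\partial P_\alpha \to \partial P_\alpha^t$ sending the vertex at boundary arc length $k$ to the vertex at boundary arc length $k$; on each edge this is the unique arc-length--preserving map. I then extend it radially: a point $p \ne c_\alpha$ lies on a unique ray from $c_\alpha$ meeting $\partial P_\alpha$ at some $b$, and I send $p$ to the point on the geodesic ray from $c_\alpha^t$ to $b^t := \phi_\alpha(b)$ whose geodesic distance from $c_\alpha^t$ is the fraction $|c_\alpha p|/|c_\alpha b|$ of the distance from $c_\alpha^t$ to $b^t$, with $c_\alpha \mapsto c_\alpha^t$. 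This $\phi_\alpha$ is a continuous bijection between compact Hausdorff spaces, hence a homeomorphism; it carries vertices to vertices and edges to edges, and its restriction to every edge is arc-length preserving.

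Because the gluing isometries (Euclidean on $E$, spherical on $E_t$) preserve arc length, the maps $\phi_\alpha$ agree on identified edges and so descend to a well-defined continuous bijection $\Phi \colon E \to E_t$; the inverses $\phi_\alpha^{-1}$, which are likewise arc-length preserving on edges, descend to a continuous inverse, whence $\Phi$ is a homeomorphism. By construction $\Phi$ takes the vertices, edges, and polygons of $E$ to the vertices, edges, and spherical polygons of $E_t$, and it preserves the cyclic order of the tiles around each vertex; therefore the vertex-type of $v$ in $E$ equals the vertex-type of $\Phi(v)$ in $E_t$, which gives the final assertion of the lemma.

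I expect the only genuine subtlety to be the compatibility of the $\phi_\alpha$ with the edge identifications: one must check that the arc-length parametrisation of $\partial P_\alpha$ and the chosen spherical gluing isometries are set up with matching vertex labellings, so that $\phi_\alpha$ and $\phi_\beta$ truly coincide on a shared edge and $\Phi$ descends to the quotient. The radial interpolation, together with its continuity at the centre and across the triangle boundaries, is routine once the boundary map is fixed.
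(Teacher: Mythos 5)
Your proposal is correct and follows essentially the same route as the paper: replace each Euclidean tile by the unit regular $t$-spherical polygon with the same number of sides, glue by the same combinatorial pattern using edge isometries, and descend per-tile homeomorphisms that restrict to arc-length isometries on edges to a global homeomorphism. The paper simply asserts the existence of such per-tile homeomorphisms, whereas you construct them explicitly by radial extension from the centre; this extra detail is harmless and fills in exactly the step the paper leaves to the reader.
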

\begin{proof}
    Suppose $E$ is a regular polygonal surface that satisfies the hypothesis. Let $\{P_\alpha\}_{\alpha \in \Lambda}$ be the collection of unit regular Euclidean polygons in $E$ and $t$ be a number such that $t>N/(2\pi)$. As each polygon in $\{P_\alpha\}_{\alpha \in \Lambda}$ has at most $N$ sides, we can consider a collection of unit regular $t$-spherical polygons $\{P_{t,\alpha}\}_{\alpha \in \Lambda}$ where each $P_{t,\alpha}$ has the same number of sides as $P_\alpha$. Now we identify the edges of the spherical polygons $\{P_{t,\alpha}\}_{\alpha \in \Lambda}$ in the same pattern as how the edges of the Euclidean polygons $\{P_\alpha\}_{\alpha \in \Lambda}$ are identified in $E$. This results in a $t$-spherical surface $E_t$. 
    
    Next, we consider a collection of homeomorphisms $\{g_\alpha \colon P_{t,\alpha} \rightarrow P_\alpha\}_{\alpha \in \Lambda}$, where each $g_\alpha$ maps an edge of $P_{t,\alpha}$ to the corresponding edge of $P_\alpha$ by an isometry of segments of unit length. Such a collection of maps $\{g_\alpha\}_{\alpha \in \Lambda}$ glue to give a homeomorphism $g \colon E_t \rightarrow E$. This homeomorphism maps vertices, edges and spherical polygons of $E_t$ to vertices, edges and polygons of $E$, respectively.
\end{proof}
\begin{proof}[Proof of Proposition \ref{thm:Proposition_polygonal_to_spherical}]
    Given a regular polygonal surface $E$ satisfying the hypothesis, let $N$ be the upper bound on the number of sides of the polygons in $E$. Using Lemma \ref{thm:Lemma_polygonal_to_spherical}, for each radius $t>N/(2\pi)$, get a $t$-spherical surface $E_t$ which is homeomorphic to $E$. It remains to show that there exists a $t_0\in(N/(2\pi), \infty)$ such that the $t_0$-spherical surface $E_{t_0}$ has $t_0$-spherical angle-sum strictly lesser than $2\pi$ at each vertex of $E_{t_0}$.

    Given a vertex $v$ of $E$, suppose $[k_1,k_2,\dots,k_d]$ is the vertex-type at the vertex $v$. Next, for each $t >N/(2\pi)$, let $v_{E_t}$ be the vertex in $E_t$ that corresponds to the vertex $v$ in $E$. Recall, the $r$-spherical angle-sum at the vertex $v_{E_t}$ is defined by the following formula.
    \[
    \mathcal A_t(v_{E_t}) = \sum_{i=1}^{d} 2\phi_S(t,k_i)
    \]
    Three observations are in order.
    \begin{enumerate}
        \item In the limit $t \rightarrow \infty$, the $r$-spherical angle-sum $\mathcal A_t(v_{E_t})$ tends to the Euclidean angle-sum $\mathcal{A}(v)$. Then, recall that $\mathcal{A}(v) < 2\pi$ by hypothesis.
        \item The $t$-spherical angle-sum at the vertex $v_{E_t}$ only depends on the radius $t$ and the vertex-type $[k_1,k_2,\dots,k_d]$ at the vertex $v$. In particular, it has no dependence on the surface $E_t$. Thus, we use the notation $\mathcal{A}_t([k_1,k_2,\dots,k_d]):=\mathcal A_t(v_{E_t})$.
        \item There are only finitely many distinct vertex-types in $E_t$. To see this, first note that the degree $d$ at each vertex is at most $6$ because a vertex $v$ with degree more than $7$ will have an angle-sum $\mathcal{A}(v)$ greater than $2\pi$. Next, recall that each polygon in $E$ has at most $N$ sides. This means that the vertex-type of each vertex in $E$ is an element of the finite set $\{[k_1,k_2,\dots,k_d] \colon k_i \leq N \text{ and } d\leq 6\}$.
    \end{enumerate}
    For brevity, we denote a vertex-type $[k_1,k_2,\dots,k_d]$ by \texttt {K}. Let $\texttt{K}_1,\texttt{K}_2,\dots, \texttt{K}_m$ be the finitely many distinct vertex-types in $E$. For a fixed $1\leq j \leq m$, by observation (1) above, we have
    \[ \lim_{t \rightarrow \infty} \mathcal{A}_t(\texttt{K}_j) = \mathcal{A}(\texttt{K}_j).\]
    As the limit $\mathcal{A}(\texttt{K}_j)$ is strictly lesser than $2\pi$, we can choose a $t_0$ such that $\mathcal{A}_{t_0}(\texttt{K}_j) < 2\pi$. Moreover, as there are only finitely many indices $j$, we can choose a sufficiently large $t_0$ such that $\mathcal{A}_{t_0}(\texttt{K}_j) < 2\pi$ for all $1\leq j \leq m$. By observations (2) and (3) above, we have $\mathcal{A}_{t_0}(v_{E_{t_0}}) < 2\pi$ for each vertex $v_{E_{t_0}}$ in $E_{t_0}$. The surface $E_{t_0}$ is the required regular $t_0$-spherical surface that is homeomorphic to $E$.
\end{proof}

\section{Proof of Theorem 1(b) ($\kappa(v) = 0$ case)}\label{sec:zero_case}
\begin{Proposition}\label{thm:Proposition_angle_2pi_regular_polygonal_surface}
    Suppose $\widetilde E$ is a simply-connected oriented polygonal surface. If the angle-sum at each vertex of $\widetilde E$ is equal to $2\pi$, then $\widetilde E$ is conformally isomorphic to $\mathbb C$.
\end{Proposition}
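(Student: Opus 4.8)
The plan is to equip $\widetilde E$ with the flat conformal metric supplied by Lemma \ref{thm:Lemma_flat_conformal_metric}, argue that this metric is complete, and then invoke the classical fact (quoted in the introduction) that a simply-connected complete flat Riemannian surface is isometric to the Euclidean plane.

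First I would apply Lemma \ref{thm:Lemma_flat_conformal_metric} --- whose construction uses only the hypothesis $\mathcal A(v)=2\pi$ --- to obtain a smooth conformal metric $\rho$ on $\widetilde E$ that restricts to $|dz|^2$ on each $\operatorname{Int}(P_\alpha)$ and is flat. The role of the angle-sum hypothesis is precisely that the vertex charts $h_v$ become local isometries, so $\rho$ extends smoothly across the vertices with \emph{no} cone singularity. Thus $(\widetilde E,\rho)$ is an honest smooth flat Riemannian surface, and it is simply connected by hypothesis.

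The heart of the argument is completeness of $(\widetilde E,\rho)$, and this is where I expect the genuine work. I would show that there is a uniform $r_0>0$ such that every closed ball $\overline B(x,r_0)$ is compact; this forces $(\widetilde E,\rho)$ to be a complete metric space, since a Cauchy sequence is eventually trapped inside one such ball. Here the geometry of the unit regular polygons enters. Since the interior angle of a unit regular $n$-gon is at least $\pi-\tfrac{2\pi}{3}=\tfrac{\pi}{3}$ and the angle-sum at each vertex equals $2\pi$, every vertex has degree at most $6$; moreover every unit regular polygon has inradius at least $\tfrac{1}{2}\cot(\pi/3)=\tfrac{1}{2\sqrt 3}$. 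These two facts give bounded local geometry: a ball of small fixed radius $r_0$ meets only a bounded number of polygons and hence lies in a finite union of compact Euclidean polygons, whose closure is compact. This is the step that genuinely uses that the tiles do not shrink, and it is why the statement is applied to surfaces built from unit regular polygons (as is the case for the universal cover of a regular polygonal surface, by Lemma \ref{thm:Lemma_lift_polygonal_surface}).

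Finally, with $(\widetilde E,\rho)$ simply connected, complete, and flat, the Riemannian space-form theorem gives an isometry $\Phi\colon(\widetilde E,\rho)\to(\mathbb R^2,|dz|^2)=\mathbb C$; composing with a reflection if necessary, I may take $\Phi$ to be orientation preserving. Since $\rho$ is a conformal metric for the complex structure of $\widetilde E$ and $|dz|^2$ is conformal for the standard structure on $\mathbb C$, an orientation-preserving isometry between the two is automatically biholomorphic. Hence $\Phi$ is a conformal isomorphism $\widetilde E\xrightarrow{\sim}\mathbb C$, as claimed. The main obstacle is the completeness step; once the bounded-geometry estimate is in hand, the remainder is a direct appeal to Lemma \ref{thm:Lemma_flat_conformal_metric} and the space-form theorem.
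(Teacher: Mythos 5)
Your proposal is correct and follows essentially the same route as the paper: equip $\widetilde E$ with the flat conformal metric of Lemma \ref{thm:Lemma_flat_conformal_metric}, prove completeness by trapping small balls (equivalently, tails of Cauchy sequences) inside a finite union of compact unit polygons, apply the flat space-form theorem, and observe that an orientation-preserving isometry between conformal metrics is biholomorphic (the paper spells this last step out via the charts $h_{\alpha\beta}$ and the classification $z\mapsto az+b$ of orientation-preserving Euclidean isometries). Your packaging of completeness via a uniform $r_0$ with compact closed balls, justified by the degree-at-most-$6$ and inradius bounds, is a correct variant of the paper's argument that $B_\rho(P_\alpha,1/2)\subseteq P_\alpha^{\mathrm{nbd}}$.
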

Theorem \hyperref[thm:Theorem_polygonal_Bonnet_Myers]{1(b)} follows as a corollary of Proposition \ref{thm:Proposition_angle_2pi_regular_polygonal_surface} above. To see this, let $E$ be an orientable regular polygonal surface where the combinatorial curvature at each vertex is zero. Using Lemma \ref{thm:Lemma_lift_polygonal_surface}, we obtain a simply-connected regular polygonal surface $\widetilde E$ which is a universal cover of $E$. Additionally, Lemma \ref{thm:Lemma_lift_polygonal_surface} tells us that combinatorial curvature $\kappa(\tilde v)$ at each vertex $\tilde v$ of $\widetilde E$ is zero. For regular polygonal surfaces, recall that angle-sum $\mathcal{A}(w)$ at a vertex $w$ is equal to $2\pi - 2\pi \kappa(w)$. It follows that the angle-sum at each vertex of $\widetilde E$ is exactly $2\pi$. Applying Proposition \ref{thm:Proposition_angle_2pi_regular_polygonal_surface} above on $\widetilde E$, we conclude that $\widetilde E =\mathbb C$ and $E$ is parabolic. This proves Theorem \hyperref[thm:Theorem_regular_polygonal_type]{1(b)}.

The proof of Proposition \ref{thm:Proposition_angle_2pi_regular_polygonal_surface} relies crucially on the following fact from Riemannian geometry. 
\begin{Lemma}\label{thm:Lemma_flat_R^2}
    Suppose $M$ is a $2$-dimensional simply-connected Riemannian manifold. If the metric of $M$ is complete and flat, then $M$ is isometric to $\mathbb R^2$.
\end{Lemma}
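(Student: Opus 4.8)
The plan is to exhibit an explicit isometry, namely the Riemannian exponential map based at a point, and to prove that it is simultaneously a local isometry and a diffeomorphism. Fix any $p \in M$. Since $M$ is complete, the Hopf--Rinow theorem guarantees that $\exp_p \colon T_pM \to M$ is defined on the entire tangent space; I equip $T_pM$ with the (flat, complete) inner-product metric $g_p$, so that $(T_pM, g_p)$ is isometric to $\mathbb R^2$. The goal is then to show that $\exp_p$ is an isometry onto $M$.

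First I would establish that $\exp_p$ is a \emph{local} isometry, which is where flatness enters decisively, through the theory of Jacobi fields. For $v \in T_pM$ and a variation vector $w$, the differential $d(\exp_p)_v(w)$ equals $J(1)$, where $J$ is the Jacobi field along the geodesic $t \mapsto \exp_p(tv)$ with $J(0)=0$ and $J'(0)=w$. The Jacobi equation is $J'' + R(J,\dot\gamma)\dot\gamma = 0$; because $M$ is flat the curvature term vanishes identically, leaving $J'' = 0$. Solving with these initial conditions gives $J(t)$ equal to the parallel transport of $tw$ along $\gamma$. Since parallel transport is a linear isometry between tangent spaces, $|d(\exp_p)_v(w)| = |w|$ for every $v$ and $w$; hence each $d(\exp_p)_v$ is a linear isometry, so $\exp_p$ is a local isometry, and in particular a local diffeomorphism everywhere.

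Next I would upgrade this local statement to a global one using completeness and simple-connectivity. A local isometry whose domain is a complete Riemannian manifold is automatically a smooth (surjective) covering map onto its connected target; applied here, $\exp_p \colon (T_pM, g_p) \to M$ is a Riemannian covering map. (Equivalently, one may invoke the Cartan--Hadamard theorem directly, since completeness together with identically zero---hence nonpositive---sectional curvature makes $\exp_p$ a covering map.) Because $M$ is simply connected and $T_pM$ is connected, this covering has a single sheet, so $\exp_p$ is a bijective local diffeomorphism, i.e.\ a diffeomorphism. A bijective local isometry is a global isometry, and therefore $\exp_p$ is an isometry from $(T_pM,g_p) \cong \mathbb R^2$ onto $M$, proving the claim.

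I expect the only genuinely substantive step to be the Jacobi-field computation showing $\exp_p$ is a local isometry; everything else is standard and essentially topological once the metric is understood. The passage from completeness to a globally defined exponential map is Hopf--Rinow, the same theorem supplies surjectivity of $\exp_p$, and the collapse of the covering to a diffeomorphism is pure covering-space theory applied to a simply-connected base. Thus the heart of the argument is the single observation that vanishing curvature turns the Jacobi equation into $J'' = 0$, which is precisely the Euclidean behaviour.
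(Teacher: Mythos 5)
Your proof is correct and is essentially the standard argument: flatness turns the Jacobi equation into $J''=0$, making $\exp_p$ a local isometry, and completeness plus simple connectivity upgrade this to a global isometry via covering-space theory. The paper does not prove this lemma itself but defers to do Carmo and Lee, whose proofs follow exactly this route, so there is nothing to compare beyond noting agreement.
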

For a proof of Lemma \ref{thm:Lemma_flat_R^2}, see, for example, Theorem 4.1 of \cite[pg. 163]{do1992riemannian} or Theorem 11.12 of \cite[pg. 204]{lee2006riemannian}.
\begin{proof}[Proof of Proposition \ref{thm:Proposition_angle_2pi_regular_polygonal_surface}]
    Suppose $\widetilde E$ is a simply-connected polygonal surface with angle-sum $2\pi$ at each vertex. Recall from Lemma \ref{thm:Lemma_flat_conformal_metric} that $\widetilde E$ has a flat conformal metric $\rho$. We shall show the metric $\rho$ is complete and apply Lemma \ref{thm:Lemma_flat_R^2} to obtain an isometry $H \colon \widetilde{E} \rightarrow \mathbb R^2=\mathbb C$. 

    To show the metric $\rho$ is complete, start with the observation that each polygon $P_\alpha$ in $\widetilde E$ is compact. Next, for each $P_\alpha$ in $\widetilde E$ define $P_\alpha^\text{nbd}$ to be the finite union of all the polygons (including $P_\alpha$) that have a non-empty intersection with $P_\alpha$. Now, given a Cauchy sequence $\{a_n\}$ in $\widetilde E$, excluding finitely many terms if necessary, we may assume $d_{\rho}(a_n,a_m)<1/2$ for all $n,m\in \mathbb N$. Each $a_n$ lies in some polygon of $\widetilde E$. Let $a_1$ lie in $P_\alpha$. Observe that the entire sequence $\{a_n\}$ lies in $P_\alpha^\text{nbd}$, where $P_\alpha^\text{nbd}$ is defined to be the union of $P_\alpha$ along with all of its neighbors. This is because the neighborhood $B_{\rho}(P_\alpha, 1/2)$ of $P_\alpha$ is contained in $P_\alpha^\text{nbd}$. Lastly, $\{a_n\}$ is a Cauchy sequence that lies in a finite union of compact sets, hence the sequence has a limit point in $\widetilde E$.

    Applying Lemma \ref{thm:Lemma_flat_R^2} on the complete conformal metric $\rho$ gives an isometry $H \colon \widetilde E \rightarrow \mathbb R^2 = \mathbb C$. By considering the conjugate $\overline{H}$ if necessary, we assume $H$ is orientation-preserving. We now show that $H$ is a conformal isomorphism. To see this, recall that conformal metric $\rho$ is obtained by pulling back the metric $|dz|^2$ on $\mathbb C$ using the charts $h_{\alpha\beta} \colon \operatorname{Int}(P_\alpha \cup e \cup P_\beta) \rightarrow \mathbb C$. As a result of this, $H\circ h_{\alpha\beta}^{-1}$ is an orientation-preserving isometry from an open subset of $\mathbb C$ to another open subset of $\mathbb C$. Note that orientation-preserving isometries between open subsets of $\mathbb C$ take the form $z\mapsto az+b$. This implies that the maps $H\circ h_{\alpha\beta}^{-1}$ are holomorphic for all $\alpha,\beta$. In conclusion, $H$ is a bijective holomorphic map or a conformal isomorphism. 
\end{proof}

\section{Proof of Theorem 1(c) ($\kappa(v)<0$ case)}\label{sec:negative_case}
Let $E$ be a regular polygonal surface whose combinatorial curvature is strictly negative at each vertex. Suppose that each polygon in $E$ has at most $N$ sides. Similar to the proof of \hyperref[thm:Theorem_regular_polygonal_type]{1(b)}, it suffice to consider the case when $E$ is simply-connected because when $E$ is not simply-connected, we can use Lemma \ref{thm:Lemma_lift_polygonal_surface} to get a polygonal surface which is a simply-connected universal covering of $E$. 

We provide a proof of Theorem \hyperref[thm:Theorem_regular_polygonal_type]{1(c)} closely following Oh \cite{oh2005aleksandrov}. We remark that Oh, in fact, proves a more general version for Aleksandrov surfaces satisfying the condition that they can be partitioned into clusters where each cluster contains a definite amount of negative curvature.

The proof can be summarized as follows. First, we show that $E$ is homeomorphic to the plane (Subsection \ref{subsec:E_is_plane}). Then, the crucial observation is that all Jordan domains in $E$ satisfy an isoperimetric inequality (Subsection \ref{subsec:isoperimetric}). Lastly, this observation implies that $E$ is conformally isomorphic to the unit disc $\mathbb D$ by Ahlfors' hyperbolicity criterion (Subsection \ref{subsec:Ahlfors}).
\subsection{Eliminating the elliptic case}\label{subsec:E_is_plane}
In this subsection, we show that $E$ is not the Riemann sphere. Hence, $E$ is conformally isomorphic to either $\mathbb C$ or $\mathbb D$ and $E$ is homeomorphic to the plane. 

\begin{Lemma}[Discrete Gauss-Bonnet]\label{thm:Lemma_Gauss_Bonnet}
    Suppose $G=(V,E,F)$ is a finite graph that is embedded in a compact surface $S$. Let $\chi(S)$ be the Euler characteristic of $S$ and $\kappa(v)$ be the combinatorial curvature at the vertex $v$ in $G$. Then, we have
    \begin{align}
    \begin{split}
        \chi(S) &= \sum_{v\in V} \left( 1 - \frac{\operatorname{deg}(v)}{2} + \sum_{i=1}^{d_v} \frac{1}{k^{(v)}_i}\right) \\
        & = \sum_{v \in V} \kappa(v), \\
    \end{split}
    \end{align}
    where $[k^{(v)}_1, k^{(v)}_2, \dots, k^{(v)}_{d_v}]$ is the vertex-type of the vertex $v$.
\end{Lemma}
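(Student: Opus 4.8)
The plan is to reduce the identity to Euler's formula $\chi(S) = |V| - |E| + |F|$, where $|V|$, $|E|$, and $|F|$ denote the numbers of vertices, edges, and faces of the embedded graph $G$. First I would expand the right-hand side of the asserted formula term by term, writing
\[
\sum_{v \in V} \kappa(v) = \sum_{v \in V} 1 \; - \; \sum_{v \in V} \frac{\operatorname{deg}(v)}{2} \; + \; \sum_{v \in V} \sum_{i=1}^{d_v} \frac{1}{k^{(v)}_i},
\]
and then I would identify the three resulting sums with $|V|$, $|E|$, and $|F|$, respectively. Adding these together and comparing with Euler's formula yields $\sum_{v} \kappa(v) = \chi(S)$.

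The first sum is immediately $|V|$. For the second sum I would invoke the handshake lemma: since every edge of $G$ has exactly two endpoints, $\sum_{v \in V} \operatorname{deg}(v) = 2|E|$, so $\sum_{v \in V} \operatorname{deg}(v)/2 = |E|$. The third and most substantive sum I would evaluate by a double-counting argument over vertex--face incidences. Here the key observation is that, for a fixed vertex $v$, the faces recorded in its vertex-type $[k^{(v)}_1,\dots,k^{(v)}_{d_v}]$ are exactly the $d_v$ faces incident to $v$, with the face of size $k^{(v)}_i$ contributing the weight $1/k^{(v)}_i$. Thus the double sum equals $\sum_{(v,f)} 1/|f|$, the sum of $1/|f|$ over all incident pairs $(v,f)$ with $v$ a boundary vertex of the face $f$. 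Summing first over the vertices of a fixed face $f$ and then over all faces, and using that a face which is a $|f|$-gon meets exactly $|f|$ distinct vertices, each face contributes $|f|\cdot(1/|f|) = 1$, so the third sum equals $|F|$.

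The step I expect to require the most care is this reorganization of the third sum: one must verify that, as $v$ ranges over $V$, the multiset of (vertex, face) pairs recorded by the vertex-types is precisely the collection of vertex--face incidences of $G$, and that each $k$-gon genuinely meets $k$ distinct vertices so that its total contribution collapses to exactly $1$. For the polygonal surfaces under consideration these incidence counts hold by construction, since each face is a polygon bounded by a cycle of edges; nevertheless this bookkeeping is the point on which the entire identity rests, so I would state the incidence correspondence explicitly before performing the interchange of summation.
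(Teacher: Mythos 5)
Your proposal is correct and follows essentially the same route as the paper: both expand $\sum_v \kappa(v)$ into three sums identified with $|V|$, $|E|$ (via the handshake lemma), and $|F|$ (via double-counting vertex--face incidences, each $k$-gon contributing $k$ incidences of weight $1/k$), and then invoke Euler's formula. The only cosmetic difference is that you flag the incidence bookkeeping more explicitly than the paper does, which is a reasonable precaution but not a departure in method.
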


It follows immediately from the above lemma that $E$ is not the Riemann sphere. To see this, let $G=(V,E,F)$ be the graph obtained by considering the vertices, edges, and polygons in $E$. Suppose $E$ is the compact Riemann sphere. Then, $E$ has only finitely many polygons and $G$ is a finite graph. Now, Lemma \ref{thm:Lemma_Gauss_Bonnet} implies that 
\[\chi(E)=2=\sum_{v \in V} \kappa(v).\]
This is a contradiction as $\kappa(v) < 0 $ for all $v$ in $E$.

For generalizations of Lemma \ref{thm:Lemma_Gauss_Bonnet} to infinite graphs, see \cite[Theorem 1.3]{devos2007analogue}, \cite[Theorem 2.1]{chen2009gauss}, and \cite[Theorems 2.1, 2.2]{oh2022some}.

\begin{proof}[Proof of Lemma \ref{thm:Lemma_Gauss_Bonnet}]
    Observe that $|V|,|E|$, and $|F|$ can be counted as follows:
    \begin{subequations}\label{eqn:vertex_contribution}
        \begin{align}
            |V| & = \sum_{v \in V} 1 \label{subeqn:V}\\
            |E| & = \sum_{v \in V} \frac{\operatorname{deg}(v)}{2} \label{subeqn:E}\\
            |F| & = \sum_{v \in V} \sum_{i = 1}^{d_v} \frac{1}{k^{(v)}_i}. \label{subeqn:F}
        \end{align}
    \end{subequations}
    To understand equation \ref{subeqn:E}, note that $\operatorname{deg}(v)/2$ is the contribution to the number of edges $|E|$ coming from the vertex $v$. That is, a vertex $v$ has $\operatorname{deg}(v)$ number of edges around it, and each edge is incident with two vertices. Thus, adding $\operatorname{deg}(v)/2$ over all vertices gives the number of edges $|E|$. Equation \ref{subeqn:F} is justified similarly: the sum $\sum 1/k^{(v)}_i$ over the vertex-type $[k^{(v)}_1, \dots, k^{(v)}_{d_v}]$ of the vertex $v$, is the contribution to the number of faces $|F|$ coming from the vertex $v$. Summing over all vertices gives the count of the number of faces $|F|$. 

    Lastly, we combine Euler's formula $\chi(S) = |V| - |E| + |F|$, equation \ref{eqn:vertex_contribution}, and the definition of combinatorial curvature (see equation \ref{eqn:combi_curv}) to deduce Lemma \ref{thm:Lemma_Gauss_Bonnet}.
\end{proof}

\subsection{Isoperimetric inequalities}\label{subsec:isoperimetric}

\begin{Proposition}[Isoperimetric inequality for negatively curved planar graphs]\label{thm:Proposition_Higuchi}
    Let $G=(V,E,F)$ be a planar graph where each face has at least $3$ sides and the degree of each vertex is at least $3$. Suppose the combinatorial curvature $\kappa(v)$ is strictly negative at each vertex. Then, there is a constant $c>0$ such that for every finite subgraph $H=(V(H),E(H),F(H))$ of $G$ we have
    \begin{equation}
        |F(H)| \leq c |E(\partial H)|,
    \end{equation}
    where $E(\partial H) := \{ \{v,w\} \in E(H) \cap E(f') \colon f' \in F(G) \setminus F(H)\}$. \qed
\end{Proposition}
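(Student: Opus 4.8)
The plan is to extract, from the strict negativity of the combinatorial curvature together with the degree and face-size bounds, a uniform lower bound $\kappa(v) \le -c_0 < 0$ for some absolute constant $c_0 > 0$, and then use the Discrete Gauss-Bonnet formula (Lemma \ref{thm:Lemma_Gauss_Bonnet}) applied to the finite subgraph $H$. The point is that a planar graph in which every vertex has degree at least $3$, every face has at least $3$ sides, and $\kappa(v) < 0$ can only realize finitely many distinct vertex-types with strictly negative curvature \emph{bounded away from zero}; indeed, once $\deg(v) \ge 3$ and each $k_i \ge 3$, the quantity $\kappa(v) = 1 - \deg(v)/2 + \sum_i 1/k_i$ takes values in a discrete set, and there is a largest negative value attainable, so $\kappa(v) \le -c_0$ for a universal $c_0 > 0$ depending only on the combinatorial constraints. (This is the analog of the classical fact that the ``most positive'' negative combinatorial curvature is $-1/1806$ or a similar explicit constant.)

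The main step is to run Gauss-Bonnet on the subsurface carried by $H$. First I would realize $H$ as a graph embedded in a compact surface: take the planar embedding of $H$, and let $S$ be the sphere obtained by capping off the outer face, or more carefully, consider the subsurface $\Sigma$ that is the union of the closed faces in $F(H)$. The boundary $\partial \Sigma$ consists of the edges in $E(\partial H)$. I would then apply a Gauss-Bonnet computation with boundary: interior vertices of $H$ contribute their full curvature $\kappa(v) \le -c_0$, while vertices on $\partial H$ contribute a boundary term that is controlled (bounded in absolute value) by the combinatorial data along the boundary. Summing, the total interior curvature is at most $-c_0 \cdot |V_{\mathrm{int}}(H)|$, while the topological side ($\chi$ of a planar piece with boundary) plus the boundary contributions is bounded by a constant times $|E(\partial H)|$.

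To convert a bound on interior vertices into the desired bound on $|F(H)|$, I would use the local finiteness forced by the hypotheses: since every face has at least $3$ sides and every vertex has degree at least $3$, the number of faces, edges, and interior vertices of $H$ are all comparable up to multiplicative constants (each face is incident to at least $3$ edges, each interior edge to exactly $2$ faces, etc.), so $|F(H)| \le C \cdot |V_{\mathrm{int}}(H)| + C'\cdot |E(\partial H)|$. Combining this with the curvature estimate $|V_{\mathrm{int}}(H)| \le C'' |E(\partial H)|$ obtained from Gauss-Bonnet yields $|F(H)| \le c\,|E(\partial H)|$ for a suitable $c$.

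The hard part will be the bookkeeping of the boundary contributions in the Gauss-Bonnet step: the faces in $F(H)$ need not fit together into a nice disc, $H$ may have several boundary components or cut vertices, and a vertex on $\partial H$ may be incident to both faces in $F(H)$ and faces outside $F(H)$, so its curvature must be split between interior and boundary accounting. Handling this cleanly — either by passing to a combinatorial Gauss-Bonnet with boundary (turning-angle) formula, or by a careful double-counting of incidences $(\text{vertex}, \text{face})$ and $(\text{edge}, \text{face})$ that isolates the boundary edges — is where the real work lies, and I would organize the argument around a single incidence-counting identity so as to avoid topological case analysis on the shape of $H$.
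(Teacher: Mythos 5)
The paper does not actually prove this proposition; it cites Higuchi's Theorem B(i), describing the argument as ``a careful manipulation of Euler's formula.'' Your plan --- a uniform negative-curvature gap, a Gauss--Bonnet computation with boundary on the subcomplex carried by $H$, and incidence counting to trade interior vertices for faces --- is the same strategy as the cited proof, so the route is right. But there are two gaps. First, your justification of the uniform gap $\kappa(v)\le -c_0$ is incorrect as stated: the set of values of $\kappa(v)=1-\deg(v)/2+\sum_i 1/k_i$ with $\deg(v)\ge 3$ and $k_i\ge 3$ is \emph{not} discrete. It accumulates at every partial limit $1-d/2+\sum_{i\in S}1/k_i$, and in particular at $0$ from above (e.g.\ the vertex types $[3,6,k]$ have $\kappa=1/k$). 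What you need is that the \emph{negative} values are bounded away from $0$, with sharp bound $-1/1806$ attained by $[3,7,43]$; this is true, and you name the right constant, but it is a genuine finite case analysis on $\deg(v)\le 6$ and the smallest $k_i$ (the unit-fraction estimate behind Higuchi's conjecture), not a consequence of discreteness. Without this gap the whole argument collapses --- the hexagonal lattice ($\kappa\equiv 0$) has quadratic area growth against linear boundary growth, so ``strictly negative'' alone, without uniformity, proves nothing.

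Second, the step you explicitly defer --- the boundary bookkeeping in Gauss--Bonnet with boundary --- is not a routine detail; it is essentially the entire content of Higuchi's proof. To complete it you would need to (i) fix precisely what $F(H)$ and the boundary vertices of $H$ are, (ii) establish an identity of the form $\chi(\Sigma)=\sum_{v\ \text{interior}}\kappa(v)+\sum_{v\in\partial\Sigma}\tau(v)$ for the subcomplex $\Sigma=\bigcup_{f\in F(H)}\overline f$, with the boundary terms $\tau(v)$ controlled by the number of incident edges of $E(\partial H)$, while handling multiple components and cut vertices (note $\chi(\Sigma)$ is bounded by the number of components, itself at most $|E(\partial H)|/3$ since each component has at least three boundary edges), and (iii) verify $|F(H)|\le 2(|V(H)|-\chi)$ via Euler's formula and the hypothesis that every face has at least $3$ sides --- your appeal to the face and vertex counts being ``comparable'' is too quick, since the degree lower bound gives $|V|\lesssim|E|$ but not an upper bound on $|E|$ in terms of $|V|$. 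None of these steps should fail, but as written the proposal is a correct plan rather than a proof.
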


Proposition \ref{thm:Proposition_Higuchi} is proved by Higuchi \cite[Theorem B(i)]{higuchi2001combinatorial} by performing a careful manipulation of Euler's formula.\footnote{Theorem B(i) stated in \cite{higuchi2001combinatorial} also assumes that every face in $G$ is bounded by a cycle and any two faces in $G$ have at most one common edge. But these two assumptions are not necessary and are not used in the proof (see the discussion after Assumption 1.1 in \cite[pp. 221]{higuchi2001combinatorial}).} Various variants of isoperimetric inequalities for negatively curved graphs have been studied. The interested reader may refer to \cite[Theorem 2]{woess1998note} and \cite[Theorem 4]{oh2016strong}.

We now proceed to the proof of the isoperimetric inequality for Jordan regions. To measure the perimeter and area of Jordan regions we shall use the conformal metric $\rho$ introduced in Lemma \ref{thm:Lemma_conformal_metric}. We caution again that this metric is not a smooth Riemannian metric. In local holomorphic charts, it takes the form $\varrho(z)^2|dz|^2$, where $\varrho(z)$ is a continuous function that is smooth and positive away from a discrete set of points. Also, recall that the restriction of this conformal metric $\rho$ to $\operatorname{Int}(P_\alpha)\subseteq E$ gives the standard Euclidean metric $|dz|^2$. Thus, the measurement of $\rho$-length of curves and $\rho$-area of domains inside a polygon $\operatorname{Int}(P_\alpha)\subseteq E$ coincide with the Euclidean length and area, respectively.

Next, we partition $E$ into a collection of triangles $\mathcal{F}$. This collection $\mathcal{F}$ is obtained by taking the collection of polygons $\{P_\alpha\}_{\alpha}$ in $E$, and subdividing each $P_\alpha$ into isosceles triangles by adding one new vertex at the face center (see Figure \ref{fig:star_subdivision}).
\begin{figure}[t!]
    \centering
    {
        \hfill \hfill
        \includegraphics[width=0.25\textwidth]{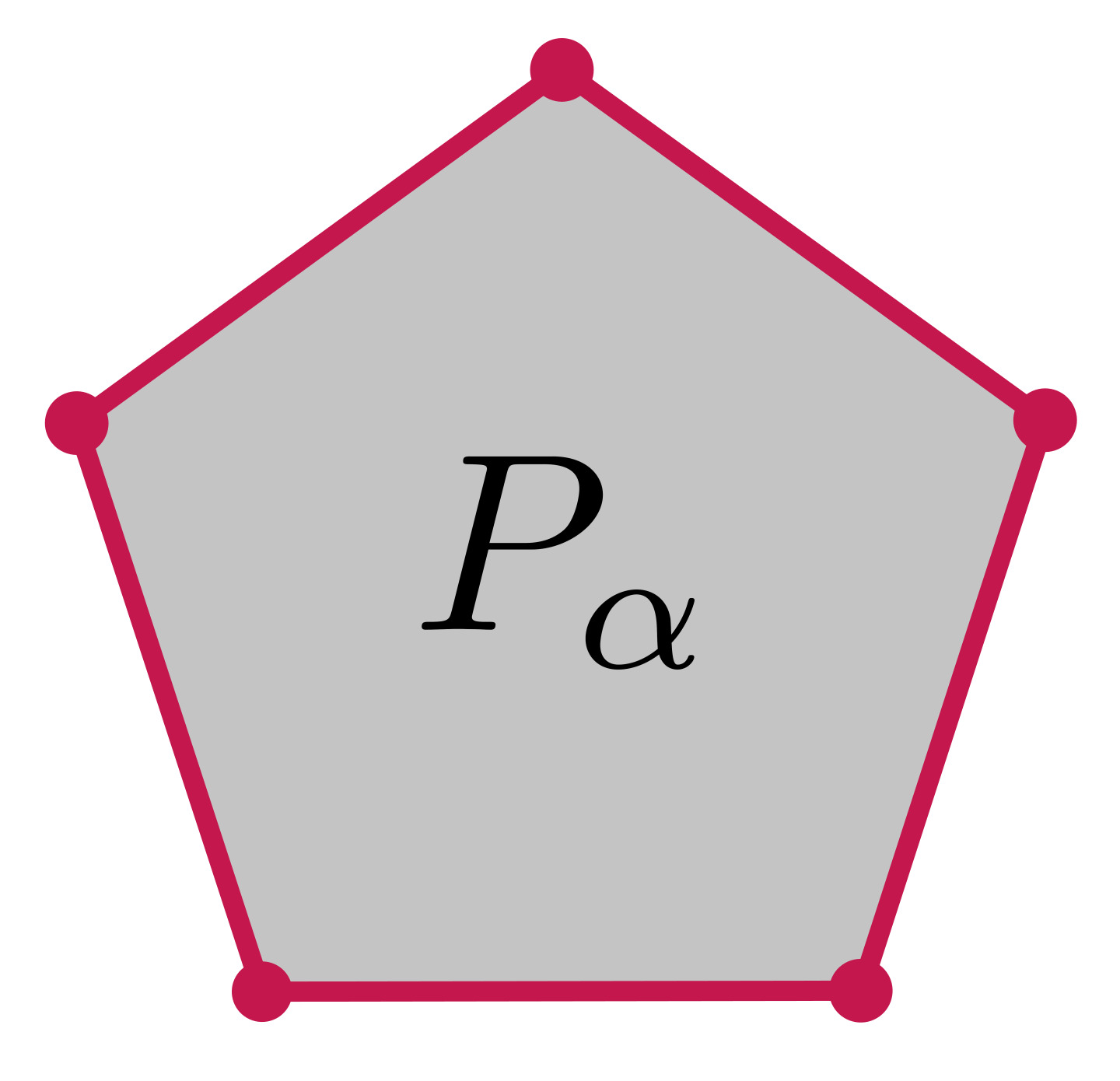} \hfill
        \includegraphics[width=0.25\textwidth]{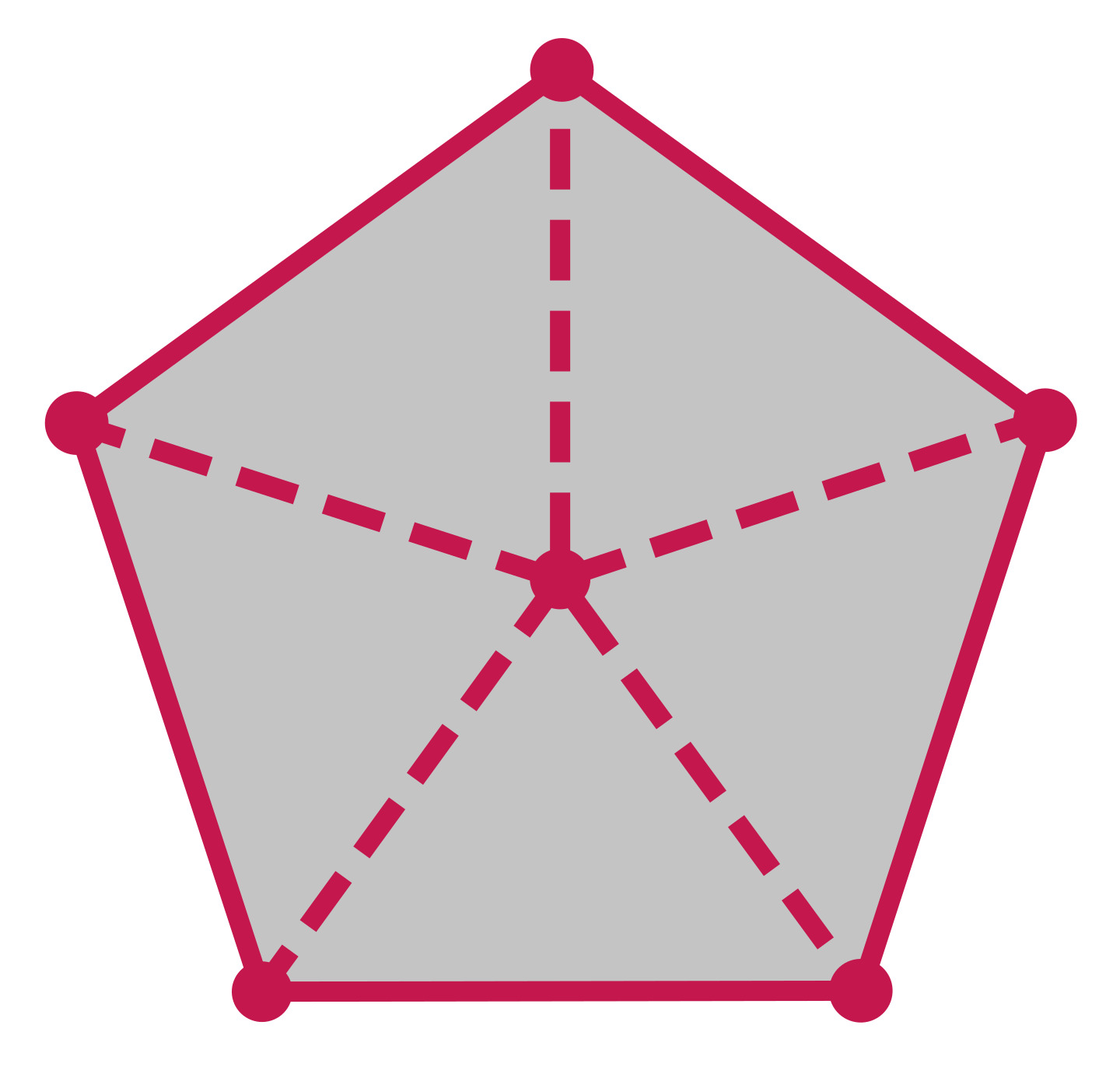}
        \hfill \hfill
    }
    \caption{Star subdivision of a regular $n$-gon into $n$ isosceles triangles.}
    \label{fig:star_subdivision}
\end{figure}

\begin{Proposition}[Intermediate isoperimetric inequality]\label{thm:Proposition_intermediate_isoperimetric}
    Let $E$ be a simply connected regular polygonal surface that has combinatorial curvature strictly negative at each vertex. Suppose $N$ is a constant such that each polygon in $E$ has at most $N$ sides. Let $D$ be an open set consisting of a finite number of triangles in $\mathcal F$. Then there exists a constant $c$ such that 
    \[ \operatorname{Area}_{\rho}(D) \leq c \operatorname{len}_\rho(\partial D).\]
\end{Proposition}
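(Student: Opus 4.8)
The plan is to reduce the continuous isoperimetric inequality for $D$ to a purely combinatorial isoperimetric inequality for the triangulation $\mathcal F$, and then to transfer the latter to the polygon graph $G$ of $E$, where Higuchi's inequality (Proposition \ref{thm:Proposition_Higuchi}) applies. The bridge between the continuous and combinatorial worlds is a pair of uniform geometric estimates that hold because every polygon has unit side length and at most $N$ sides. Since $\rho$ restricts to the Euclidean metric on each $\operatorname{Int}(P_\alpha)$, each triangle $T \in \mathcal F$ sitting inside a unit regular $n$-gon ($3 \le n \le N$) has $\rho$-area exactly $\tfrac14\cot(\pi/n) \le \tfrac14 \cot(\pi/N) =: a_2$, while each of its edges is either a unit base edge (length $1$) or a spoke joining the face center to a vertex (length equal to the circumradius $\tfrac{1}{2\sin(\pi/n)} \ge \tfrac{1}{2\sin(\pi/3)} =: b_1$). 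Consequently, writing $\mathcal T(D)$ for the set of triangles of $D$ and $\partial_{\mathcal F}D$ for the set of triangle-edges lying on $\partial D$ (the free edges of the collection), I get $\operatorname{Area}_{\rho}(D) \le a_2 |\mathcal T(D)|$ and $\operatorname{len}_{\rho}(\partial D) \ge b_1 |\partial_{\mathcal F}D|$. It therefore suffices to establish a combinatorial bound $|\mathcal T(D)| \le C |\partial_{\mathcal F}D|$ with $C$ depending only on $N$.

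For the combinatorial count I would classify each polygon $P$ of $E$ according to how $D$ meets its triangles: call $P$ \emph{full} if all of its triangles lie in $D$, \emph{partial} if some but not all do, and \emph{empty} otherwise; let $F_{\mathrm{full}}$ and $F_{\mathrm{part}}$ denote the respective counts. Since each polygon contributes at most $N$ triangles, $|\mathcal T(D)| \le N(F_{\mathrm{full}} + F_{\mathrm{part}})$. The partial polygons are cheap to control: around the center of a partial polygon the ``in'' triangles form a nonempty proper cyclic arc, so there are at least two spokes where membership in $D$ switches, and each such spoke is a boundary edge in $\partial_{\mathcal F}D$; hence $F_{\mathrm{part}} \le \tfrac12 |\partial_{\mathcal F}D|$.

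The essential step is bounding $F_{\mathrm{full}}$ via Higuchi. Here I would apply Proposition \ref{thm:Proposition_Higuchi} to the finite subgraph $H$ of $G$ whose faces are exactly the full polygons, its vertices and edges being those incident to full polygons. Its hypotheses hold: $G$ is planar because $E$ is homeomorphic to the plane (Subsection \ref{subsec:E_is_plane}), every face has at least $3$ sides, the curvature is strictly negative by assumption, and the degree is at least $3$ since a vertex of degree $\le 2$ would have angle-sum $<2\pi$ and hence $\kappa>0$. Higuchi then gives $F_{\mathrm{full}} = |F(H)| \le c\,|E(\partial H)|$. Finally I would check that each edge of $E(\partial H)$ --- an edge separating a full polygon from a non-full one --- is accounted for by the boundary of $D$: the triangle on the full side lies in $D$, so if the triangle on the other side does not, the edge is a boundary base edge of $D$; otherwise that other polygon is partial. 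This yields $|E(\partial H)| \le |\partial_{\mathcal F}D| + N F_{\mathrm{part}} \le (1 + N/2)|\partial_{\mathcal F}D|$. Combining the three estimates produces $|\mathcal T(D)| \le N\bigl(c(1 + N/2) + \tfrac12\bigr)|\partial_{\mathcal F}D|$, and hence $\operatorname{Area}_{\rho}(D) \le (a_2/b_1)\,N\bigl(c(1+N/2)+\tfrac12\bigr)\operatorname{len}_{\rho}(\partial D)$, as desired.

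The main obstacle is the combinatorial bookkeeping in the last paragraph: Higuchi's inequality naturally lives on the polygon graph $G$, whereas $D$ and its boundary are expressed in the finer triangulation $\mathcal F$, whose face-centers may carry nonnegative curvature and so do not themselves satisfy Higuchi's hypotheses. The crux is therefore to show that the geometric boundary $\partial D$ dominates both the combinatorial boundary $E(\partial H)$ of the full region and the count of partial polygons, which is exactly what the full/partial/empty trichotomy and the spoke-switching argument are designed to do. The remaining geometric comparisons are routine trigonometric estimates depending only on $N$.
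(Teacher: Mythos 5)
Your proof is correct and follows essentially the same route as the paper: both reduce the statement to Higuchi's combinatorial isoperimetric inequality (Proposition \ref{thm:Proposition_Higuchi}) on the polygon graph $G$, using the unit side lengths and the bound $N$ to convert between $\rho$-area/length and face/edge counts. The only difference is the treatment of triangles that do not fill out whole polygons: the paper enlarges $D$ to a polygon-aligned region $D'$ by adjoining every polygon meeting $\partial D$ and then bounds $\operatorname{len}_\rho(\partial D')$ in terms of $\operatorname{len}_\rho(\partial D)$, whereas you keep $D$ fixed, apply Higuchi only to the fully covered polygons, and charge each partially covered polygon to two of its spokes in $\partial D$ --- a valid, and if anything slightly tidier, piece of bookkeeping.
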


We describe a proof of Proposition \ref{thm:Proposition_intermediate_isoperimetric} closely following \cite[Lemma 3.14]{oh2005aleksandrov}. We remark that this is the only step where we use the assumption about the upper bound $N$ on the number of sides in the polygons of $E$.

\begin{proof}[Proof of Proposition \ref{thm:Proposition_intermediate_isoperimetric}]
    Start by considering the graph $G$ obtained by taking all the vertices, edges, and faces of $E$. This graph is canonically embedded in $E$, which is homeomorphic to the plane. Further, note that each face has at least $3$ sides and each vertex has degree at least $3$ (if the degree were less than $3$, then combinatorial curvature would be positive). Hence, $G$ satisfies the hypothesis of Proposition \ref{thm:Proposition_Higuchi}.

    Next, let $D$ be an open set consisting of finitely many triangles in $\mathcal{F}$. Consider the special case where $D$ is a union of polygons, that is, $D$ is of the form:
    \begin{equation}\label{eqn:D_union_of_polygons}
        D = \operatorname{Int}\left( \bigcup_{i=1}^m P_{i}\right),
    \end{equation}
    for some polygons $P_{i}$. In this case, we apply Proposition \ref{thm:Proposition_Higuchi} on the subgraph $H$ of $G$ obtained by considering all the vertices, edges, and faces of $\{P_{i}\}_{1\leq i \leq m}$. This gives the ``discrete" isoperimetric inequality $|F(H)| \leq c |E(\partial H)|$. Now, to obtain the isoperimetric inequality in terms of the $\rho$-metric, note the following: 
    \begin{align*}
        \operatorname{Area}_\rho(D) =\sum_{i=1}^{m=|F(H)|} \operatorname{Area}_\rho(P_{i}) &\leq \sum_{i=1}^{m=|F(H)|} \frac{N^2}{4\pi}\\
        &= \frac{N^2}{4\pi}|F(H)|\\
        &\leq \frac{N^2c}{4\pi}|E(\partial H)|\\
        &= \frac{N^2c}{4\pi} \sum_{e \in E(\partial H)} \operatorname{len}_\rho(e)= \frac{N^2c}{4\pi} \operatorname{len}_\rho(\partial D).
    \end{align*}
    The inequality $\operatorname{Area}_\rho(P_{i}) \leq N^2/(4\pi)$ and equality $|E(\partial H)| = \sum_{e \in E(\partial H)} \operatorname{len}_\rho(e)$ are both a consequence of the fact that the $\rho$-metric agrees with the Euclidean metric on each $\operatorname{Int}(P_i)$: the area of an $n$-sided unit regular Euclidean polygon is bounded above by $n^2/(4\pi)$ and the Euclidean side length of each $P_i$ is exactly one unit, by construction.    
    
    In the general case where $D$ is not a union of polygons, let $\{Q_i\}_{1\leq i \leq d}$ be the set of all polygons in $E$ that intersect $\partial D$. Consider $D' = \operatorname{Int}(D \cup \bigcup_{i=1}^d Q_i)$ and note that $D \cup \bigcup_{i=1}^d Q_i$ is a union of some finitely many polygons $\{P_i\}_{1\leq i \leq m}$. This means $D'$ is of the special form $D' = \operatorname{Int}\left( \bigcup_{i=1}^m P_{i}\right)$ for which the isoperimetric inequality $ \operatorname{Area}_{\rho}(D') \leq c \operatorname{len}_\rho(\partial D')$ holds true. This gives rise to the isoperimetric inequality for $D$ as follows:  
    \begin{align*}
        \operatorname{Area}_\rho(D) &\leq  \operatorname{Area}_\rho(D')\\
        &\leq c'\operatorname{len}_\rho(\partial D') = c'\cdot |E(\partial D')| \\
        &\leq c'N\cdot \#\{\text{polygons } Q_i \text{ that intersect }\partial D\} \\
        &\leq 2c'N \cdot |E(\partial D)|\\
        &\leq 2c'N \sum_{e\in E(\partial D)} \sqrt{3}\operatorname{len}_\rho(e)= 2\sqrt{3}c'N \operatorname{len}_\rho(\partial D).
    \end{align*}
    The first inequality holds because $D \subseteq D'$. Next, $|E(\partial D')|$ is the number of edges in $\partial D'$ which is at most $N$ times the number of polygons $Q_i$ that intersect $\partial D$. Then, the number of polygons $Q_i$ that intersect $\partial D$ is at most twice the number of edges in $\partial D$. The last inequality $|E(\partial D)| \leq \sum_{e\in E(\partial D)} \sqrt{3}\operatorname{len}_\rho(e)$ is obtained by noting that each edge $e \in E(\partial D)$ has a length of at least $1/\sqrt{3}$. This is because an edge in $E(\partial D)$ is an edge of some isosceles triangle in $\mathcal{F}$ that was obtained by subdividing a unit regular polygon by adding a vertex at the center.
\end{proof}

\begin{Proposition}[Isoperimetric inequality for Jordan domains]\label{thm:Proposition_Jordan_isoperimetric}
    Let $E$ be a simply connected regular polygonal surface that has combinatorial curvature strictly negative at each vertex. Suppose $N$ is a constant such that each polygon in $E$ has at most $N$ sides. Let $D$ be a Jordan domain in $E$. Then, there exists a constant $c$ such that 
    \pushQED{\qed}
    \[ \operatorname{Area}_{\rho}(D) \leq c \operatorname{len}_\rho(\partial D). \qedhere \]
    \popQED
\end{Proposition}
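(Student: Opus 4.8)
The plan is to approximate the Jordan domain $D$ by a union of triangles of $\mathcal{F}$ to which the intermediate inequality (Proposition \ref{thm:Proposition_intermediate_isoperimetric}) applies, and to absorb the discrepancy between $D$ and this approximation into $\operatorname{len}_\rho(\partial D)$. Rather than the (lossy) union of all triangles meeting $D$, I would take the \emph{majority region}
\[
    \hat D := \operatorname{Int}\Bigl( \bigcup \{ T \in \mathcal{F} : \operatorname{Area}_\rho(D \cap T) > \tfrac12 \operatorname{Area}_\rho(T) \} \Bigr),
\]
the union of triangles more than half-filled by $D$. Since $\overline{D}$ is compact it meets only finitely many triangles, so $\hat D$ is an open set consisting of finitely many triangles and Proposition \ref{thm:Proposition_intermediate_isoperimetric} gives $\operatorname{Area}_\rho(\hat D) \le c \operatorname{len}_\rho(\partial \hat D)$. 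Writing $\operatorname{Area}_\rho(D) \le \operatorname{Area}_\rho(\hat D) + \operatorname{Area}_\rho(D \setminus \hat D)$, the proof reduces to two comparisons with $\operatorname{len}_\rho(\partial D)$: a perimeter bound $\operatorname{len}_\rho(\partial \hat D) \le C_1 \operatorname{len}_\rho(\partial D)$ and a leftover-area bound $\operatorname{Area}_\rho(D \setminus \hat D) \le C_2 \operatorname{len}_\rho(\partial D)$.

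Both comparisons rest on the bounded geometry coming from the hypothesis that each polygon has at most $N$ sides: there are only finitely many isometry types of triangles in $\mathcal{F}$ (one for each polygon size $3,\dots,N$), each is a flat convex Euclidean triangle with area, diameter and edge lengths bounded above and below by constants depending only on $N$, and likewise for the finitely many types of adjacent pairs $T \cup e \cup T'$, which develop isometrically onto bounded planar regions. Each such region satisfies a relative (Neumann) isoperimetric inequality, and I would use this twice. For the perimeter bound: every edge $e$ of $\partial \hat D$ separates a more-than-half-filled triangle $T$ from a less-than-half-filled neighbour $T'$, so inside $T \cup e \cup T'$ both $D$ and its complement have $\rho$-area at least $\tfrac12 A_{\min}$; the relative isoperimetric inequality then forces $\partial D$ to traverse a definite length $\eta > 0$ inside $T \cup e \cup T'$. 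As each triangle lies in boundedly many such pairs, summing over the edges of $\partial \hat D$ with bounded multiplicity yields $\#\{\text{edges of } \partial \hat D\} \cdot \eta \le K \operatorname{len}_\rho(\partial D)$, hence $\operatorname{len}_\rho(\partial \hat D) \le C_1 \operatorname{len}_\rho(\partial D)$. For the leftover area: on a less-than-half-filled triangle $T$ the set $D \cap T$ has $\rho$-area at most $\tfrac12 \operatorname{Area}_\rho(T)$, so the relative isoperimetric inequality in the flat triangle $T$ bounds $\operatorname{Area}_\rho(D \cap T)$ by a constant multiple of $\operatorname{len}_\rho(\partial D \cap T)$ (quadratically when this length is small, trivially when it is bounded below); summing the essentially disjoint pieces gives $\operatorname{Area}_\rho(D \setminus \hat D) \le C_2 \operatorname{len}_\rho(\partial D)$.

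The main obstacle is precisely the perimeter comparison $\operatorname{len}_\rho(\partial \hat D) \le C_1 \operatorname{len}_\rho(\partial D)$, that is, controlling the combinatorial perimeter of the approximating region by the genuine $\rho$-length of $\partial D$. The naive choice of approximating region (all triangles meeting $D$) fails here: near a vertex of large degree a very short piece of $\partial D$ can meet many triangles, since the cone angle $\mathcal{A}(v) > 2\pi$ crowds many triangle corners together, and the combinatorial perimeter is then not controlled by length. The majority rule is designed to defeat exactly this phenomenon — a thin loop around a high-degree vertex fills no triangle past its half, contributes nothing to $\hat D$, and is instead handled entirely by the leftover-area estimate — while the relative isoperimetric inequality with uniform constants (guaranteed by the $N$-bounded geometry) is what converts ``a triangle and its neighbour disagree'' into ``$\partial D$ has definite length nearby''. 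Once both comparisons are in hand, combining them with Proposition \ref{thm:Proposition_intermediate_isoperimetric} gives $\operatorname{Area}_\rho(D) \le (c\,C_1 + C_2)\operatorname{len}_\rho(\partial D)$, as required.
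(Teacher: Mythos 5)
Your proof is correct, but it takes a genuinely different route from the one the paper follows. The paper does not prove Proposition \ref{thm:Proposition_Jordan_isoperimetric} from scratch: it isolates Lemma \ref{thm:Lemma_Jordan_to_st_line} as the crucial tool and cites Oh for both proofs. That argument is an iterative surgery on $D$ itself: each triangle $\Delta\in\mathcal F$ whose interior meets $\partial D$ is either added to or subtracted from $D$, at a cost of at most $c\operatorname{len}_\rho(\partial D\cap\operatorname{Int}(\Delta))$ in both perimeter and area; after finitely many steps $D$ has been replaced by a union of triangles of $\mathcal F$, the accumulated errors sum (with bounded multiplicity) to a constant times $\operatorname{len}_\rho(\partial D)$, and Proposition \ref{thm:Proposition_intermediate_isoperimetric} finishes. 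Your majority-rule region $\hat D$ replaces this induction by a single discretization, with the two discrepancies (the combinatorial perimeter of $\hat D$ and the leftover area of $D\setminus\hat D$) controlled by relative Neumann-type isoperimetric inequalities on the finitely many isometry types of triangles and adjacent pairs; the half-filled criterion correctly neutralizes the crowding of triangles at vertices of large cone angle, which, as you observe, is what defeats the naive choice of approximating region. What your version buys is that no care is needed about how $\partial D$ winds inside a single triangle --- exactly the delicate content of ``properly adding or subtracting $\Delta$'' in Lemma \ref{thm:Lemma_Jordan_to_st_line}; what it costs is the relative isoperimetric inequality, a BV/geometric-measure-theory input where Oh's surgery is elementary. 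Two points you should make explicit when writing this up: first reduce to the case $\operatorname{len}_\rho(\partial D)<\infty$, so that $D$ has finite perimeter and $P(D;U)\le\mathcal H^{1}(\partial D\cap U)$ justifies feeding $\partial D$ into the relative inequality (for adjacent pairs glued along a spoke of a subdivided triangle the developed quadrilateral has a reflex angle at the face center, so appeal to the Lipschitz rather than the convex case); and record that the bounded multiplicity in your summation over edges of $\partial\hat D$ holds because each set $\operatorname{Int}(T\cup e\cup T')$ omits the vertices and is determined by a pair of triangles sharing an edge, so a point in the interior of a triangle or of an edge lies in at most five such sets, independently of the (unbounded) vertex degrees of $E$.
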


The crucial tool used in the proof of Proposition \ref{thm:Proposition_Jordan_isoperimetric} above is the following lemma.

\begin{Lemma}[Replacing Jordan curves with piece-wise straight lines]\label{thm:Lemma_Jordan_to_st_line}
    If $D$ is a Jordan region in $E$ and $\Delta$ a triangle in $\mathcal{F}$ such that $\operatorname{Int}(\Delta)\cap \partial D \neq \emptyset$ and $D \nsubseteq \Delta$, then we may obtain a region $D'$ by properly adding or subtracting $\Delta$ from $D$ so that the following properties hold for some constant $c$:
    \pushQED{\qed}
    \begin{align*}
        \operatorname{len}_\rho(\partial D') &\leq \operatorname{len}_\rho(\partial D) + c\operatorname{len}_\rho(\partial D \cap \operatorname{Int}(\Delta)) \\
        \operatorname{Area}_\rho(D') &\leq \operatorname{Area}_\rho(D) + c\operatorname{len}_\rho(\partial D \cap \operatorname{Int}(\Delta)).  \qedhere
    \end{align*}
    \popQED
\end{Lemma}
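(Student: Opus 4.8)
The plan is to work entirely inside the single polygon $P_\alpha \supseteq \Delta$ that contains the triangle, where $\rho$ coincides with the Euclidean metric $|dz|^2$; thus every length and area below is Euclidean. Since each triangle of $\mathcal{F}$ comes from subdividing a unit regular $k$-gon with $3 \le k \le N$, there are only finitely many congruence types of triangles, so their diameters, inradii and perimeters are bounded above and below by constants depending only on $N$ — this is exactly what lets the final $c$ be uniform. Because $D \nsubseteq \Delta$ and $\partial D$ is a single Jordan curve, the set $\partial D \cap \operatorname{Int}(\Delta)$ is a disjoint union of finitely many simple arcs $\alpha_1,\dots,\alpha_p$ (after a harmless piecewise-linear approximation of $\partial D$ inside $\Delta$, or assuming transversality), each with both endpoints on $\partial\Delta$. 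These arcs cut $\Delta$ into finitely many regions, each lying entirely inside $D$ or entirely outside $D$; I call these the \emph{in} and \emph{out} regions, and for each region $R$ I write $b(R) = \len_\rho(\partial R \cap \partial\Delta)$ and $a(R) = \len_\rho(\partial R \cap \operatorname{Int}\Delta)$.

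Adding $\Delta$ (setting $D' = D \cup \Delta$) deletes every $\alpha_i$ from the boundary and inserts exactly the $\partial\Delta$-arcs bordering the \emph{out} regions, so $\len_\rho(\partial D') = \len_\rho(\partial D) - \sum_i \len_\rho(\alpha_i) + \sum_{R\ \mathrm{out}} b(R)$; subtracting $\Delta$ gives the same with \emph{in} in place of \emph{out}. The operation is dictated by a fixed interior basepoint $O$, say the incenter of $\Delta$ (which we may assume misses every $\alpha_i$): exactly one region $R_O$ contains $O$, and I keep on $\partial D'$ the colour class \emph{opposite} to that of $R_O$, i.e.\ I add $\Delta$ when $R_O$ is \emph{in} and subtract it when $R_O$ is \emph{out}. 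With this choice the newly inserted $\partial\Delta$-boundary consists only of arcs bordering regions that do not contain $O$.

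The crux — and the step I expect to be the main obstacle — is the visibility estimate
\[ b(R) \le C\, a(R) \qquad \text{for every region } R \text{ with } O \notin R, \]
with $C = C(N)$, which I would prove by a shadow argument from $O$. Since $\Delta$ is convex and $O$ is interior, $\partial\Delta$ is a graph over the circle of directions at $O$, and a direct computation on each straight edge gives $ds/d\psi \le L^2/r_0$ (with $L = \operatorname{diam}\Delta$ and $r_0$ the distance from $O$ to $\partial\Delta$); hence $b(R) \le C_1\,\Psi$, where $\Psi$ is the angular extent of $\partial R \cap \partial\Delta$. On the other hand, because $O \notin R$ while $\partial R \cap \partial\Delta \subseteq \overline{R}$, every ray from $O$ toward this portion must first cross one of the interior arcs comprising $\partial R \cap \operatorname{Int}\Delta$, so these arcs angularly shadow the whole sector and $\Psi \le (2\pi/r_0)\,a(R)$. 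The only delicate point is that an interior arc might pass close to $O$ and cast a wide shadow per unit length; but such an arc has both endpoints on $\partial\Delta$, so to reach near $O$ it must have length at least of order $r_0$, which keeps its shadow-to-length ratio bounded by $2\pi/r_0$. Combining the two inequalities yields $b(R) \le C\,a(R)$.

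Summing this estimate over the colour class opposite to $R_O$ — none of whose regions contains $O$ — and using that each $\alpha_i$ borders at most two regions gives $\sum b(R) \le 2C \sum_i \len_\rho(\alpha_i) = 2C\,\len_\rho(\partial D \cap \operatorname{Int}\Delta)$, which is precisely the perimeter inequality. For the area inequality, subtraction only decreases the area, so that case is immediate; for addition the added area is that of the \emph{out} regions, and the divergence-theorem bound $\operatorname{Area}_\rho(R) \le \tfrac12 \operatorname{diam}(\Delta)\,\len_\rho(\partial R)$ (taking $O$ as basepoint), combined with the perimeter estimate, bounds it by a constant multiple of $\len_\rho(\partial D \cap \operatorname{Int}\Delta)$. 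Taking $c$ to be the larger of the two constants so produced, and recalling that all triangle constants are uniform in $N$, completes the proof.
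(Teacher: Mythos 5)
The paper does not actually prove this lemma---it is stated with a pointer to Oh's Lemma 4.13---so the comparison is with that source rather than with an in-paper argument. Your proof is essentially correct and reproduces the geometric core of Oh's approach: the uniform bounded geometry of the finitely many congruence types of triangles in $\mathcal F$, the decision to add or subtract $\Delta$ according to which side of $\partial D$ a fixed interior basepoint $O$ lies on (so that the newly created boundary only borders regions not containing $O$), and the star-shaped visibility estimate $b(R)\le C\,a(R)$ for those regions. Both halves of that estimate check out: $ds/d\psi\le L^2/r_0$ on the convex curve $\partial\Delta$, and the shadow bound for arcs with both endpoints on $\partial\Delta$, where your point that an arc passing near $O$ must have length comparable to $r_0$ is exactly what keeps the shadow-to-length ratio uniform. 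The divergence-theorem area bound and the observation that subtraction only decreases area are also fine.

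The one genuine gap is the opening reduction. For a general rectifiable Jordan curve, $\partial D\cap\operatorname{Int}(\Delta)$ can have infinitely many components, and ``a harmless piecewise-linear approximation \dots or assuming transversality'' is an assertion, not an argument; everything downstream (finitely many regions, each arc bordering at most two regions, almost every point of $\partial R\cap\partial\Delta$ being joined to $O$ by a segment whose terminal portion lies in $R$) leans on it. The fix is standard but must be supplied: either run the estimate on the finitely many components of length at least $\epsilon$ and let $\epsilon\to 0$ using $\sum_i\len_\rho(\alpha_i)<\infty$, or prove an approximation statement producing a simple piecewise-linear curve close to $\partial D$ in both length and enclosed area and explain why the lemma for the approximant suffices for the intended iteration. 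Two smaller repairs: the bookkeeping ``inserts exactly the $\partial\Delta$-arcs bordering the out regions'' should be a one-sided inequality (for subtraction, the arcs of $\partial\Delta$ newly appearing on $\partial D'$ are those where the exterior side of $\partial\Delta$ meets $D$, and these are contained in the union of the old boundary with the arcs bordering in-regions, which is all you need); and you should note explicitly that $O$ may be chosen off $\overline{\partial D\cap\operatorname{Int}(\Delta)}$ at a definite distance from $\partial\Delta$, since that set has measure zero, so the constants stay uniform.
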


Lemma \ref{thm:Lemma_Jordan_to_st_line} and Proposition \ref{thm:Proposition_Jordan_isoperimetric} are proved in \cite[Lemma 4.13 and pp. 4567]{oh2005aleksandrov}.

\subsection{Ahlfors hyperbolicity criteria}\label{subsec:Ahlfors}

\begin{Proposition}[Ahlfors hyperbolicity criteria]\label{thm:Proposition_Ahlfors}
    Let $X$ be an open simply-connected Riemann surface with a conformal metric $\rho$. Here, by a conformal metric, we mean that $\rho$ takes the form $\varrho(z)|dz|^2$ in any holomorphic chart, where $\varrho(z)$ is a continuous function that is smooth and positive away from a discrete set. If $\rho$ allows a linear isoperimetric inequality for every Jordan region in $X$, then $X$ is conformally equivalent to the unit disc $\mathbb D$.
\end{Proposition}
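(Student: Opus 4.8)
The plan is to run the classical length--area argument of Ahlfors. Since $X$ is an open simply-connected Riemann surface, the uniformization theorem tells us that $X$ is conformally equivalent to either $\mathbb{C}$ or $\mathbb{D}$, so it suffices to rule out the case $X = \mathbb{C}$. Suppose toward a contradiction that $X = \mathbb{C}$, and write the metric in the global coordinate $z$ as $\rho = \sigma(z)^2\,|dz|^2$, where $\sigma$ is continuous and is smooth and strictly positive away from a discrete set $P \subseteq \mathbb{C}$. For $r > 0$ let $D_r := \{\, |z| < r \,\}$, and set
\[ A(r) := \operatorname{Area}_\rho(D_r), \qquad L(r) := \len_\rho(\partial D_r). \]
Since $\sigma$ is continuous, both quantities are finite, and $A$ is nondecreasing with $A(r) > 0$ for every $r > 0$ (as $\sigma$ vanishes only on the discrete set $P$).

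The heart of the matter is to combine two inequalities. In polar coordinates the area density gives $A(r) = \int_0^r \int_0^{2\pi} \sigma(s e^{i\theta})^2\, s \,d\theta\, ds$, so that for almost every $r$ one has $A'(r) = \int_0^{2\pi} \sigma(r e^{i\theta})^2\, r \, d\theta$, while the length of the Euclidean circle is $L(r) = \int_0^{2\pi} \sigma(r e^{i\theta})\, r \, d\theta$. An application of the Cauchy--Schwarz inequality then yields
\[ L(r)^2 \leq 2\pi \int_0^{2\pi} \sigma(r e^{i\theta})^2\, r^2\, d\theta = 2\pi\, r\, A'(r). \]
On the other hand, $D_r$ is a Jordan domain, so the hypothesized linear isoperimetric inequality provides a constant $c$ with $A(r) \leq c\, L(r)$ for all $r$. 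Squaring and substituting gives $A(r)^2 \leq c^2 L(r)^2 \leq 2\pi c^2\, r\, A'(r)$, or equivalently $\dfrac{A'(r)}{A(r)^2} \geq \dfrac{1}{2\pi c^2\, r}$ for almost every $r$.

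Now fix $r_0 > 0$. Since $1/A$ is nonincreasing, integrating the last inequality from $r_0$ to $r$ gives
\[ \frac{1}{A(r_0)} \;\geq\; \frac{1}{A(r_0)} - \frac{1}{A(r)} \;=\; \int_{r_0}^{r} \frac{A'(s)}{A(s)^2}\, ds \;\geq\; \frac{1}{2\pi c^2} \log\!\frac{r}{r_0}. \]
The left-hand side is a fixed finite number, whereas the right-hand side tends to $+\infty$ as $r \to \infty$; this is the desired contradiction. Hence $X \neq \mathbb{C}$, and therefore $X$ is conformally equivalent to $\mathbb{D}$.

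I expect the main obstacle to be technical rather than conceptual: because $\sigma$ is only continuous and may degenerate (vanish or blow up) along the discrete set $P$, one must justify that $A$ is locally absolutely continuous in $r$ with the stated almost-everywhere derivative, that $L(r)$ is given by the stated integral, and that the fundamental theorem of calculus applies when passing to the integrated inequality. These points are handled by Tonelli's theorem together with the observation that $P$ meets the circle $\partial D_r$ for only countably many $r$ and has two-dimensional measure zero; the monotonicity of $A$ (hence of $1/A$) is exactly what lets an almost-everywhere differential inequality yield the integrated bound. The genuinely geometric input -- the Cauchy--Schwarz/co-area step and the isoperimetric hypothesis -- is standard, so the real care lies in this measure-theoretic bookkeeping around the singular set.
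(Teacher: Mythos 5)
Your proposal is correct and follows essentially the same route as the paper: assume $X=\mathbb{C}$, apply the linear isoperimetric inequality to the discs $D_r$, use Cauchy--Schwarz to get $L(r)^2\leq 2\pi r A'(r)$, and integrate $A'/A^2$ to reach a contradiction. The only difference is that you are more explicit about the measure-theoretic bookkeeping near the singular set, which the paper leaves implicit.
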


Note that Theorem \hyperref[thm:Theorem_regular_polygonal_type]{1(c)} now follows as a consequence of Propositions \ref{thm:Proposition_Jordan_isoperimetric} and \ref{thm:Proposition_Ahlfors}. We shall prove Proposition \ref{thm:Proposition_Ahlfors} closely following Hayman \cite[pp. 143-144]{hayman1964meromorphic}.

\begin{proof}[Proof of Proposition \ref{thm:Proposition_Ahlfors}]
    Suppose for contradiction that $X$ is conformally equivalent to $\mathbb C$ via a conformal isomorphism $F: \mathbb C \rightarrow X$. The pull-back metric $F^*(\rho)$ can be written in the form $F^*(\rho) = \alpha(z) |dz|$ for some continuous function $\alpha$ that is positive away from a discrete set. Let $L(r)$ and $A(r)$ denote the length of $\{|z|=r\}$ and the area of $\{|z|\leq r\}$, respectively. First, note that $A(r)$ is positive for all $r>0$ because $\alpha(z)$ is positive almost everywhere with respect to the Lebesgue measure $|dz|^2$. Second, by the linear isoperimetric inequalities for Jordan regions on $X$, we have:
    \begin{equation}\label{eqn:Ahlfors_1}
        A(r) \leq c L(r).
    \end{equation}
    Third, by Cauchy-Schwarz inequality, we obtain:
    \begin{equation}\label{eqn:Ahlfors_2}
        L(r)^2 \leq 2\pi r A'(r).
    \end{equation}
    To see this, note the following calculations:
    \begin{align*}
        (i) \quad L(r)^2 &= \left( \int_0^{2\pi} \alpha(re^{i \theta}) r \, d\theta\right)^2 \\
        &\leq \int_0^{2\pi} (\sqrt{r})^2 \, d\theta \cdot  \int_0^{2\pi} (\alpha(re^{i \theta}) \sqrt{r})^2 \, d\theta \\
        &\leq 2\pi r \int_0^{2\pi} \alpha(re^{i \theta})^2 r \, d\theta \\
        (ii) \quad A'(r) &= \left. \frac{d}{ds}\right|_{s=r} \left( \int_{|z|\leq s} \alpha(z)^2 |dz|^2 \right)\\
        &= \left. \frac{d}{ds}\right|_{s=r} \left( \int_{t=0}^{s} \left( \int_{\theta = 0}^{2\pi} \alpha(te^{i \theta})^2 t\, d\theta\right) dt \right)\\
        &=\int_{\theta = 0}^{2\pi} \alpha(re^{i \theta})^2 r \, d\theta.
    \end{align*}
    Now, combining equations \ref{eqn:Ahlfors_1} and \ref{eqn:Ahlfors_2}, we get $A(r)^2 \leq 2\pi c^2 r  A'(r)$. Next, pick an arbitrary $r_0 \in (0,\infty)$ and integrate $A'(t)/A(t)^2$ over the interval $(r_0,r)$ to obtain:
    \begin{align*}
        \frac{1}{A(r_0)} - \frac{1}{A(r)}  = \int_{r_0}^r \frac{A'(t)}{A(t)^2}dt \geq \frac{1}{2\pi c^2}\int_{r_0}^r \frac{dt}{t} = \frac{1}{2\pi c^2} \log \frac{r}{r_0}
    \end{align*}
    As $A(r)>0$, taking the limit $r \rightarrow \infty$ we obtain $A(r_0) = 0$. This is a contradiction since $A(r)$ is a positive function.
\end{proof}

\appendix

\section{Metric on $r$-spherical surfaces}\label{Appendix_metric}

\begin{proof}[Proof of part (a) of Lemma \ref{thm:Lemma_spherical_metric}]
    Recall the pseudo-metric $d:E_r \times E_r \rightarrow \mathbb R$ is defined to be:
    \[ d(x,y) := \inf \{\len(\gamma) \colon \gamma \text{ is a piece-wise geodesic from } x \text{ to } y \}.\]
    To show $d$ is a metric, we require $d(x,y)>0$ for all pairs of distinct points $x,y \in E_r$. Suppose $x$ lies in the intersection of $r$ polygons $P_1,P_2,\dots,P_r$. For $\epsilon>0$, let $B(x,\epsilon,P_i)$ denote the open ball of radius $\epsilon$ in $P_i$, where distance is measured in the Riemannian metric of $P_i \subseteq S^2_r$. Choose $\epsilon>0$ small enough such that $y$ is not contained in the open neighborhood $B(x,\epsilon) = \bigcup_{i=1}^r B(x,\epsilon,P_i)$. 

    Now, we claim that $d(x,y)\geq \epsilon >0$, or equivalently, any piece-wise geodesic $\gamma$ from $x$ to $y$ has a length of at least $\epsilon>0$. To see this let $\gamma$ be a piece-wise geodesic from $x$ to $y$. As $y \notin B(x,\epsilon)$, there is a $t_0>0$ such that $\gamma([0,t_0)) \subseteq B(x,\epsilon)$ and $\gamma(t_0) \notin B(x,\epsilon)$. Further, there is a partition $0=a_1<a_2<\cdots < a_{m+1}= t_0$ such that each segment $\gamma|_{[a_j,a_{j+1}]}$ lies in a single polygon $P_{k_j}\subseteq \bigcup_{i=1}^r P_i$. Now, using triangle inequality, note the following: 
    \begin{align*}
        \sum_{j=1}^m \len(\gamma_{[a_j,a_{j+1}]}) &\geq \sum_{j=1}^m d_{P_{k_j}}(\gamma(a_j),\gamma(a_{j+1})) \\
        &\geq \sum_{j=1}^m \left( d_{P_{k_j}}(x,\gamma(a_{j+1})) - d_{P_{k_j}}(x,\gamma(a_j)) \right)\\
        &= \sum_{j=1}^m \left( d_{P_{k_j}}(x,\gamma(a_{j+1})) - d_{P_{k_{j-1}}}(x,\gamma(a_j)) \right)\\
        &= d_{P_{k_m}}(x,\gamma(a_{m+1})) \\
        &=\epsilon.
    \end{align*}
    Here, $d_{P_{i}}$ denotes the distance function in the Riemannian metric of $P_i \subseteq S^2_r$. The above calculation shows that $\len(\gamma|_{[0,t_0]}) \geq \epsilon > 0$ for every piece-wise geodesic $\gamma$ joining $x$ and $y$. Hence, the pseudo-metric $d$ is a metric on $E_r$.
\end{proof}

\begin{proof}[Proof of parts (b) and (c) of Lemma \ref{thm:Lemma_spherical_metric}]

    We proceed by using the Hopf-Rinow theorem for path metric spaces. A metric space $(X,d)$ is said to be a \textit{path metric space} if the distance between each pair of points equals the infimum of the $d$-lengths of curves joining the points. The $d$-length of a continuous curve $\sigma : [0,l] \rightarrow X$ in a metric space $(X,d)$ is defined as follows.
    \[ \operatorname{len}_d(\sigma) := \sup \left\{ \sum_{j=1}^m d(\sigma(a_j),\sigma(a_{j+1})) \colon 0=a_1 < a_2 < \cdots < a_{m+1}=l \right\}\]

    \medskip
    \noindent \textbf{Hopf-Rinow theorem (path metric spaces).}\label{thm:Hopf-Rinow}
    \textit{If $(X,d)$ is a sequentially complete, locally compact path  metric space, then
    \begin{enumerate}
        \item each pair of points can be joined by a minimizing geodesic;
        \item every closed and bounded subset is compact.
    \end{enumerate}}

    \medskip
    For a proof of this theorem, see, for example, Section 1.B of \cite[pg. 9]{gromov1999metric} or Proposition I.3.7 of \cite[pg. 35]{bridson2013metric}. Now we put $X=E_r$ with the metric $d$ defined in equation \ref{eqn:spherical_metric} and proceed to show that $(E_r,d)$ is a sequentially complete, locally compact path metric space. Hence, parts (b) and (c) of Lemma \ref{thm:Lemma_spherical_metric} will follow from the above Hopf-Rinow theorem.

    To prove that $(E_r,d)$ is a path metric space we need the following equality:
    \begin{align}\label{eqn:len_d=l}
    \begin{split}
        &\inf \{ \len_d(\sigma) \colon \sigma \text{ is a continuous curve from } x \text{ to } y \} 
        \\&= d(x,y):= \inf \{\len(\gamma) \colon \gamma \text{ is a piece-wise geodesic from } x \text{ to } y \}
    \end{split}
    \end{align}
    The equality follows from two inequalities. First,  if $\sigma$ is a continuous curve from $x$ to $y$, then by definition of $\len_d(\sigma)$ and the use of triangle inequality, we get $\len_d(\sigma) \geq d(x,y)$. Second, if $\gamma:[0,l]\rightarrow E_r$ is a piece-wise geodesic from $x$ to $y$, then $\len_d(\gamma) \leq \len(\gamma)$. To see this, let $0=a_1 < a_2 < \cdots < a_{m+1}=l$ be a partition such that $\len_d(\gamma) \approx \sum_{j=1}^m d(\gamma(a_j),\gamma(a_{j+1}))$. Next, after refining the partition $\{a_j\}$ if necessary, note that each segment $\gamma|_{[a_j,a_{j+1}]}$ lies completely in a polygon $P_j$. This gives $d(a_j,a_{j+1})\leq \len(\gamma|_{[a_j,a_{j+1}]})$ and $\len_d(\gamma) \leq \len(\gamma)$.

    Next, we show that $E_r$ is locally compact. Given a point $x \in E_r$, suppose $x$ lies in the polygons $P_1,P_2,\dots,P_r$. Then, using the notation above, note that for a sufficiently small $\epsilon>0$, the open ball $B(x,\epsilon) = \bigcup_{i=1}^r B(x,\epsilon,P_i)$ is contained in the compact set $\bigcup_{i=1}^r P_i$. 

    Lastly, the proof of the sequential completeness of $(E_r, d)$ follows in the same fashion as the proof of the sequential completeness of $(E, d)$ in Proposition \ref{thm:Proposition_angle_2pi_regular_polygonal_surface}. In conclusion, $(E_r,d)$ is a sequentially complete, locally compact path metric space as required.
\end{proof}

\begin{proof}[Proof of part (d) of Lemma \ref{thm:Lemma_spherical_metric}]
    The interior of each $r$-spherical polygon $\operatorname{Int}(P_\alpha)$ in $E_r$ is, by definition, an open subset of $S^2_r$, that is, there is an embedding $h_\alpha: \operatorname{Int}(P_\alpha) \rightarrow S^2_r$. Hence, $\operatorname{Int}(P_\alpha)$ has a smooth Riemannian metric of constant curvature $+1/r^2$. Proceeding as in the proof of Lemma \ref{thm:Lemma_flat_conformal_metric} and \ref{thm:Lemma_conformal_metric}, we obtain a smooth Riemannian metric $g$ of constant curvature $+1/r^2$ on $E_r\setminus V$, where $V$ is the set of vertices in $E_r$.

    Now, let $d_g$ be the distance function defined by $g$, that is,
    \[ d_g(x,y) = \inf\left\{ \int |\dot{\tau}|_g \, dt \colon \tau \text{ is a piece-wise smooth path from } x \text{ to } y\right\},
    \]
    where $|\cdot|_g$ is norm induced by $g$ on the tangent spaces of $E_r \setminus V$. To show $d_g = d|_{E_r \setminus V}$, first, note that $d_g(x,y) \leq d(x,y)$. This is because, for each piece-wise geodesic $\gamma$ of $E_r \setminus V$, there is a partition $0=a_1<\cdots<a_{m+1}=l$ such that $\gamma|_{[a_j,a_{j+1}]}$ is a geodesic in $P_j$ and $\int_{a_j}^{a_{j+1}} |\dot{\gamma}|_g\, dt=\len(\gamma|_{[a_j,a_{j+1}]})$. Next, to show $d_g(x,y) \geq d(x,y)$, let $\tau$ be a piece-wise smooth curve joining $x$ to $y$. Recall, each point in $E_r\setminus V$ is locally isometric to $S^2_r$. Hence, for each point $z$ in the image of $\tau$, there is a small neighborhood $N_z$ such that any two points in $N_z$ can be connected by a $d_g$-minimizing geodesic. By compactness of the image of $\tau$, there is a partition $0=a_0 < \cdots < a_{m+1} = l$ such that $\tau(a_j)$ and $\tau(a_{j+1})$ can be connected by a minimizing geodesic $\gamma_j$. Concatenating these $\gamma_j$ gives a piece-wise geodesic $\gamma$ from $x$ to $y$ such that: 
    \begin{align*}
        \int_{0}^{l} |\dot{\tau}|_g\, dt  &= \sum_{j=1}^m \int_{a_j}^{a_{j+1}} |\dot{\tau}|_g\, dt \\
        & \geq \sum_{j=1}^m d_g(\tau(a_j),\tau(a_{j+1})) \\
        & = \sum_{j=1}^m \len(\gamma_j) \\
        & = \len(\gamma).        
    \end{align*}
     This shows $d_g(x,y) \geq d(x,y)$ and proves that $d_g = d|_{E_r \setminus V}$.
\end{proof}

\bibliographystyle{amsalpha}
\bibliography{refs}

\end{document}